%%
%% This is LaTeX2e input.
%%

%% The following tells LaTeX that we are using the 
%% style file amsart.cls (That is the AMS article style
%%
\documentclass{amsart}

%% This has a default type size 10pt.  Other options are 11pt and 12pt
%% This are set by replacing the command above by
%% \documentclass[11pt]{amsart}
%%
%% or
%%
%% \documentclass[12pt]{amsart}
%%

%%
%% Some mathematical symbols are not included in the basic LaTeX
%% package.  Uncommenting the following makes more commands
%% available. 
%%
\usepackage{amsthm,amssymb}
\usepackage{amsmath,amscd}
\usepackage{amssymb}

\usepackage{pinlabel}
\usepackage{lscape}
\usepackage{latexsym}
%\usepackage{amssymb}

%%
%% The following is commands are used for importing various types of
%% grapics.
%% 

%\usepackage{epsfig}  		% For postscript
%\usepackage{epic,eepic}       % For epic and eepic output from xfig

%%
%% The following is very useful in keeping track of labels while
%% writing.  The variant   \usepackage[notcite]{showkeys}
%% does not show the labels on the \cite commands.
%% 

%\usepackageshowkeys}

%%%%
%%%% The next few commands set up the theorem type environments.
%%%% Here they are set up to be numbered section.number, but this can
%%%% be changed.
%%%%

\newtheorem{Thm}{Theorem}[section]
\newtheorem{Cor}[Thm]{Corollary}
\newtheorem{Prop}[Thm]{Proposition}
\newtheorem{Lem}[Thm]{Lemma}

\newtheorem*{thma}{Theorem A}
\newtheorem*{thmb}{Theorem B}
\newtheorem*{thmc}{Theorem C}

%%
%% If some other type is need, say conjectures, then it is constructed
%% by editing and uncommenting the following.
%%

%\newtheorem{conj}[thm]{Conjecture} 

%%% 
%%% The following gives definition type environments (which only differ
%%% from theorem type invironmants in the choices of fonts).  The
%%% numbering is still tied to the theorem counter.
%%% 

\theoremstyle{definition}
\newtheorem{Def}[Thm]{Definition}
\newtheorem{Ex}[Thm]{Example}

%%
%% Again more of these can be added by uncommenting and editing the
%% following. 
%%

%\newtheorem{note}[thm]{Note}

%%% 
%%% The following gives remark type environments (which only differ
%%% from theorem type invironmants in the choices of fonts).  The
%%% numbering is still tied to the theorem counter.
%%% 

\theoremstyle{remark}

%%%
%%% The following, if uncommented, numbers equations within sections.
%%% 

\numberwithin{equation}{section}

%%%
%%% The following show how to make definition (also called macros or
%%% abbreviations).  For example to use get a bold face R for use to
%%% name the real numbers the command is \mathbf{R}.  To save typing we
%%% can abbreviate as
\newcommand{\Aut}{\operatorname{Aut}}

\newcommand{\Hom}{\operatorname{Hom}}
\newcommand{\Mor}{\operatorname{Mor}}

\newcommand{\Syl}{\operatorname{Syl}}
\newcommand{\Iso}{\operatorname{Iso}}
\newcommand{\Out}{\operatorname{Out}}

\newcommand{\Inn}{\operatorname{Inn}}
\newcommand{\Rep}{\operatorname{Rep}}

\newcommand{\res}{\operatorname{res}}
  % The real numbers.
\renewcommand{\Gamma}{\varGamma}
\renewcommand{\epsilon}{\varepsilon}
\renewcommand{\bar}{\overline}
\renewcommand{\hat}{\widehat}
\renewcommand{\leq}{\leqslant}
\renewcommand{\geq}{\geqslant}

\newcommand{\K}{\mathcal{K} }
\newcommand{\I}{\mathcal{I} }

% The style of lettering used when writing the R of the real numbers and
% other common rings is called blackboard boldface. This style is achieved
% in LaTeX with the command \mathbb{} with your letter of choice in the brackets.
% These new commands save you the hassle of writing that every time.

\newcommand{\F}{\mathcal{F}}

\newcommand{\G}{\mathcal{G}}
\newcommand{\T}{\mathcal{T}}

\newcommand{\E}{\mathcal{E}}
\newcommand{\C}{\mathcal{C}}
\renewcommand{\S}{\mathcal{S}}
\newcommand{\X}{\mathcal{X}}

%%
%% The comment after the defintion is not required, but if you are
%% working with someone they will likely thank you for explaining your
%% definition.  
%%
%% Now add you own definitions:
%%

%%%
%%% Mathematical operators (things like sin and cos which are used as
%%% functions and have slightly different spacing when typeset than
%%% variables are defined as follows:
%%%

 % The distance.

%%
%% This is the end of the preamble.
%% 

\begin{document}

%%
%% The title of the paper goes here.  Edit to your title.
%%

\title{Trees of fusion systems}
\thanks{Accepted for publication subject to revisions by Journal of Algebra, October 2013}
%%
%% Now edit the following to give your name and address:
%% 

\author{Jason Semeraro}
\address{Institute of Mathematics, University of Oxford, 24-29 St Giles, Oxford}
\email{semeraro@maths.ox.ac.uk}
\maketitle
%%
%% If there is another author uncomment and edit the following.
%%

%\author{Second Author}
%\address{Department of Mathematics, University of South Carolina,
%Columbia, SC 29208}
%\email{second@math.sc.edu}
%\urladdr{www.math.sc.edu/$\sim$second}

%%
%% If there are three of more authors they are added in the obvious
%% way. 
%%

%%%
%%% The following is for the abstract.  The abstract is optional and
%%% if not used just delete, or comment out, the following.
%%%

\begin{abstract}
We define a `tree of fusion systems' and give a sufficient condition for its completion to be saturated. We apply this result to enlarge an arbitrary fusion system by extending the automorphism groups of certain of its subgroups. 
\end{abstract}

Saturated fusion systems have come into prominence during the course of the last two decades and may be viewed as a convenient language in which to study some types of algebraic objects at a particular prime. The original idea is due to Puig in \cite{Puig}, although our notation and terminology more closely follows that of Broto, Levi and Oliver in \cite{BLO1}. The area is now well established and has attracted wide interest from researchers in group theory, topology and representation theory. Formally, a fusion system on a finite $p$-group $S$ is a category $\F$ whose objects consist of all subgroups of $S$ and whose morphisms a certain group monomorphisms between objects. Saturation is an additional property, which is satisfied by many `naturally occurring' fusion systems, including those induced by finite groups.

Recall that a tree of groups consists of a tree $\T$ with an assignment of groups to vertices and edges, and monomorphisms from `edge groups' to incident `vertex groups'. The completion of a tree of groups is the free product of all vertex groups modulo relations determined by the monomorphisms. The theory of trees of groups is a special case of Bass--Serre theory and this paper is an attempt to extend this theory to fusion systems (over a fixed prime $p$)  by attaching fusion systems to vertices and edges of some fixed tree, and injective morphisms from edge to vertex fusion systems. As in the case for groups, the completion of a `tree of fusion systems' is a colimit for the natural diagram, and we find a condition which renders this the fusion system generated by each of those attached to a vertex. 

One naturally obtains a tree of fusion systems from a tree of finite groups by replacing each group in the tree by its fusion systems at the prime $p$.  Two questions arise at this point:
\begin{itemize}
\item[(1)] Is the completion of this tree of fusion systems the fusion system of the completion of the underlying tree of groups?
\item[(2)] Is it possible to prove that the completion is saturated `fusion theoretically,' i.e. without reference to the groups themselves?
\end{itemize}
We answer (1) in the affirmative essentially by applying a straightforward lemma of Robinson concerning conjugacy relations in completions of trees of groups:

\begin{thma}
Let $(\T,\G)$ be a tree of finite groups and write $\G_\T$ for the completion of $(\T,\G)$. Let $(\T,\F,\S)$ be a tree of fusion systems induced by $(\T,\G)$ which satisfies $(H)$ so that there exists a completion $\F_\T$ for $(\T,\F,\S)$. The following hold:
\begin{itemize}
\item[(a)] $\S(v_*)$ is a Sylow $p$-subgroup of $\G_\T$.
\item[(b)] $\F_{\S(v_*)}(\G_\T)= \F_\T$.
\end{itemize}
In particular, $\F_\T$ is independent of the choice of tree of fusion systems $(\T,\F,\S)$ induced by $(\T,\G)$.
\end{thma}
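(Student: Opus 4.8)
\medskip
\noindent\textbf{Proof proposal.}\quad The plan is to prove (a), then the two inclusions that together give (b), and to observe that the closing assertion is then formal. Write $\G(v)$, $\F(v)=\F_{\S(v)}(\G(v))$ and $\S(v)$ for the group, fusion system and $p$-group attached to a vertex $v$. I shall use the following consequences of $(\T,\F,\S)$ being induced by $(\T,\G)$ and satisfying $(H)$: first, that along the unique path in $\T$ from a vertex $v$ to $v_*$ the edge monomorphisms identify $\S(v)$ with a subgroup of $\S(v_*)$, compatibly with the inclusions $\G(v)\leq\G_\T$; and second, that $\F_\T$ is the fusion system on $\S(v_*)$ generated by the images of the vertex fusion systems $\F(v)$. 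For (a): $\S(v_*)\in\Syl_p(\G(v_*))$ is in particular a finite $p$-subgroup of $\G_\T$, so it suffices to show that every finite $p$-subgroup $P\leq\G_\T$ is $\G_\T$-conjugate into $\S(v_*)$. As $\G_\T$ is the fundamental group of $(\T,\G)$ it acts without inversions on the associated Bass--Serre tree, with vertex stabilisers the conjugates of the vertex groups; since a finite group acting on a tree fixes a vertex, $P$ lies in a conjugate of some $\G(v)$, and hence, being a $p$-group, in a conjugate of $\S(v)\leq\S(v_*)$, as required.

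For the inclusion $\F_\T\subseteq\F_{\S(v_*)}(\G_\T)$: every morphism of $\F(v)$ is conjugation by an element of $\G(v)\leq\G_\T$ between subgroups of $\S(v)\leq\S(v_*)$, so it lies in $\F_{\S(v_*)}(\G_\T)$; since the latter is a fusion system and $\F_\T$ is the smallest fusion system on $\S(v_*)$ containing all the $\F(v)$, the inclusion follows. For the reverse inclusion, let $\varphi=c_g\colon P\to P^g$ be a morphism of $\F_{\S(v_*)}(\G_\T)$, with $g\in\G_\T$ and $P,P^g\leq\S(v_*)\leq\G(v_*)$. Here I would apply Robinson's lemma on conjugacy relations in completions of trees of groups to the $p$-subgroups $P$ and $P^g$ of the vertex group $\G(v_*)$: it yields a factorisation $g=g_1\cdots g_n$ along a walk $v_*=w_0,w_1,\dots,w_n=v_*$ in $\T$, with $g_i\in\G(w_i)$, in which every intermediate partial image $P^{g_1\cdots g_i}$ lies in the edge group $\G(e_i)$ of the edge $e_i=\{w_{i-1},w_i\}$. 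Since the partial images are $p$-groups, Sylow's theorem inside the finite edge and vertex groups, together with $(H)$ (which governs how the edge $p$-groups $\S(e_i)$ embed compatibly into the incident $\S(w_{i-1})$ and $\S(w_i)$, all viewed inside $\S(v_*)$), allows $c_g$ to be rewritten as a composite of conjugation maps each of which is a morphism of some vertex fusion system $\F(w_i)\subseteq\F_\T$; hence $\varphi\in\F_\T$. This establishes (b).

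The final assertion is now immediate: by (b), $\F_\T=\F_{\S(v_*)}(\G_\T)$ is determined by $(\T,\G)$ together with the choice of a Sylow $p$-subgroup $\S(v_*)$ of $\G(v_*)$, and any two such choices are $\G(v_*)$-conjugate, so $\F_\T$ is independent of the chosen induced tree of fusion systems up to the canonical identifications. The step I expect to be the main obstacle is the reverse inclusion in (b): extracting from Robinson's lemma a factorisation of $c_g$ all of whose factors genuinely lie in vertex fusion systems. This requires meshing the tree-of-groups normal form with $(H)$ and with Sylow conjugacy inside the finite vertex groups, while controlling the intermediate subgroups at each stage of the walk so that, on crossing each edge $e_i$, they may be taken inside the edge $p$-group $\S(e_i)$, and therefore inside both incident vertex $p$-groups simultaneously.
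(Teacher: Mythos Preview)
Your proposal is correct and follows the same overall strategy as the paper. For (a) both you and the paper use the action on the Bass--Serre tree $\tilde\T$; the paper argues by walking the fixed-point set $\tilde\T^P$ toward $v_*$ one edge at a time via the condition $p\nmid|\G(v):\G(e)|$ coming from $(H)$, whereas you pass directly from ``$P$ is conjugate into some $\G(v)$'' to ``$P$ is conjugate into $\S(v)\leq\S(v_*)$''. Your shortcut is fine, but note that the embedding of $\S(v)$ into $\S(v_*)$ is only compatible with $\G(v)\leq\G_\T$ up to conjugation (the $\S$-edge maps in Lemma~\ref{induce} involve the extra conjugations $c_{g_{ev}}$); since conjugacy is all you need for (a), this is harmless.

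The genuine structural difference is in the hard inclusion of (b). The paper's Robinson lemma (Lemma~\ref{roblem}) is stated and proved only for a single amalgam $A*_C B$, and the paper reduces the general tree to this case by induction on $|V(\T)|$: it strips off an extremal vertex $v$, writes $\G_\T=\G_{\T'}*_{\G(e)}\G(v)$, and applies the two-vertex lemma to that amalgam. You instead invoke a general tree-of-groups version of Robinson's lemma directly, producing a factorisation of $g$ along a walk in $\T$ with all intermediate conjugates in edge groups. That general version is indeed available (it is the statement that $P$ fixes the geodesic in $\tilde\T$ between the two $P$-fixed vertices $\G(v_*)$ and $g\G(v_*)$, hence lies in every edge stabiliser along it), so your route is legitimate and more direct. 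The paper's route buys self-containment: everything is reduced to the elementary two-vertex normal form proved in Lemma~\ref{roblem}, with no further appeal to Bass--Serre theory. The Sylow bookkeeping you flag as the main obstacle is exactly what the paper carries out explicitly in the two-vertex case, inserting elements $k_i,l_i\in\G(e)$ to push each intermediate conjugate into $\S(e)$; your sketch of this step is accurate.
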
 
In this theorem, $(H)$ is a condition on $(\T,\F,\S)$ which forces the existence of a completion $\F_\T$ on the $p$-group $\S(v_*)$ associated to a fixed vertex $v_*$ of the underlying tree $\T$. To answer question (2) above, we require the introduction of some more terminology. Alperin's Theorem asserts that conjugacy in a saturated fusion system $\F$ on $S$ is determined by the `$\F$-essential' subgroups and $S$. Conversely a deep result of Puig asserts that whenever conjugacy in $\F$ is determined by the `$\F$-centric' subgroups, $\F$ is saturated if the saturation axioms hold between such subgroups. We exploit both of these facts in the proof of the following theorem:

\begin{thmb}
Let $(\T,\F,\S)$ be a tree of fusion systems which satisfies $(H)$ and assume that $\F(v)$ is saturated for each vertex $v$ of $\T.$ Write $S:=\S(v_*)$ and $\F_\T$ for the completion of $(\T,\F,\S)$. Assume that the following hold for each $P \leq S$:
\begin{itemize}
\item[(a)] If $P$ is $\F_\T$-conjugate to an $\F(v)$-essential subgroup or $P=\S(v)$ then $P$ is $\F_\T$-centric.
\item[(b)] If $P$ is $\F_\T$-centric then $\Rep_{\F_\T}(P,\F)$ is a tree. 
\end{itemize}
Then $\F_\T$ is a saturated fusion system on $S$.
\end{thmb}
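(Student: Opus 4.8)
\emph{The plan} is to apply the criterion of Puig quoted in the introduction: $\F_\T$ (which by $(H)$ is a fusion system on $S=\S(v_*)$) will be saturated once we know (i) that conjugacy in $\F_\T$ is controlled by the $\F_\T$-centric subgroups, and (ii) that the saturation axioms hold among $\F_\T$-centric subgroups. Hypothesis (a) together with Alperin's fusion theorem is what I would use to get (i), while hypothesis (b) together with the Sylow theory for completions of trees of finite groups --- the same input that underlies Theorem A(a) --- is what I would use to get (ii).

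\textbf{Step 1 (centric control of fusion).} By construction of the completion, $\F_\T = \langle \F(v) : v\in\T\rangle$, each $\F(v)$ being regarded as a fusion system on $\S(v)\leq S$ via the structural embeddings. Since $\F(v)$ is saturated, Alperin's fusion theorem gives $\F(v) = \langle \Aut_{\F(v)}(Q)\rangle$, the generators running over the $\F(v)$-essential subgroups $Q$ and over $Q=\S(v)$. Hence $\F_\T$ is generated by the subgroups $\Aut_{\F(v)}(Q)\leq \Aut_{\F_\T}(Q)$ for such $Q$, and by hypothesis (a) every one of these $Q$ is $\F_\T$-centric. Thus $\F_\T$ is generated by automorphism groups of $\F_\T$-centric subgroups; equivalently every morphism of $\F_\T$ is a composite of restrictions of automorphisms of $\F_\T$-centric subgroups, which is the required control of fusion.

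\textbf{Step 2 (saturation axioms at a centric subgroup).} Fix an $\F_\T$-centric $P\leq S$; as centricity is invariant under $\F_\T$-conjugacy, after replacing $P$ by a conjugate we may assume $P$ is fully normalized in $\F_\T$. Hypothesis (b) says that $\Rep_{\F_\T}(P,\F)$ is a tree, i.e. the various $\F_\T$-conjugates of $P$ lying inside the groups $\S(v)$ and $\S(e)$ fit together along the underlying tree $\T$ without loops. From this I would build a genuine tree of finite groups supported on $\Rep_{\F_\T}(P,\F)$ whose vertex groups are the automizers $\Aut_{\F(v)}(Q_v)$ at a distinguished conjugate $Q_v\leq\S(v)$ of $P$ and whose edge monomorphisms are those induced by the edge fusion systems. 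A Bass--Serre / van Kampen argument for the colimit defining $\F_\T$ then identifies $\Aut_{\F_\T}(P)$, and more precisely the normalizer data at $P$, with the completion of this tree of finite groups. Since each $\F(v)$ is saturated, every vertex group is well behaved (one may take $\Aut_{\S(v)}(Q_v)\in\Syl_p(\Aut_{\F(v)}(Q_v))$, and morphisms into $Q_v$ extend), so feeding this into the Sylow theory for completions of trees of finite groups yields $\Aut_S(P)\in\Syl_p(\Aut_{\F_\T}(P))$, so that $P$ is fully automized, and likewise that $P$ is receptive: a morphism $\varphi\colon Q\to P$ with $Q$ centric is, by Step 1, a composite of generators lying in the vertex groups, and the tree structure of $\Rep_{\F_\T}(P,\F)$ allows the required extension of $\varphi$ to be glued from the vertexwise extensions provided by saturation of the $\F(v)$.

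\textbf{Step 3 (conclusion) and the main obstacle.} Steps 1 and 2 verify exactly the hypotheses of Puig's criterion --- $\F_\T$ is centric-generated, and every $\F_\T$-centric subgroup is $\F_\T$-conjugate to a fully automized, receptive one --- whence $\F_\T$ is saturated on $S$. I expect the real work to be in Step 2: converting the combinatorial tree $\Rep_{\F_\T}(P,\F)$ into the correct tree of finite groups, with the right edge groups and embeddings --- this is precisely where acyclicity of $\Rep_{\F_\T}(P,\F)$ is indispensable, ruling out the pathology of $P$ meeting several $\S(v)$ in incompatible ways --- and then establishing receptivity, since that requires a coherent family of extensions of a single morphism along the whole tree rather than one isolated local lift inside some $\F(v)$.
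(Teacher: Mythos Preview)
Your Step~1 is correct and is exactly what the paper does: Alperin's theorem in each $\F(v)$ together with hypothesis (a) shows that $\F_\T$ is generated by automorphisms of $\F_\T$-centric subgroups, so Theorem~\ref{alpthmconv} applies once the saturation axioms are verified for centrics.

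Step~2, however, contains a genuine gap. You propose to build a tree of finite groups on $\Rep_{\F_\T}(P,\F)$ with vertex groups the $\Aut_{\F(v)}(Q_v)$ and then to \emph{identify $\Aut_{\F_\T}(P)$ with the completion of this tree of groups}. But the completion of a nontrivial tree of finite groups is an amalgamated free product, hence infinite, so it cannot coincide with the finite group $\Aut_{\F_\T}(P)$. What is true --- and what the paper actually uses --- is that $\Aut_{\F_\T}(P)$ \emph{acts} on the tree $\Rep_{\F_\T}(P,\F)$ (by $\psi\cdot[\alpha]_{\F(v)}=[\psi\circ\alpha]_{\F(v)}$), and a finite group acting on a tree fixes a vertex or an edge. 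At such a fixed point $[\alpha]_{\F(r)}$ one has $\Aut_{\F_\T}(P)^\alpha=\Aut_{\F(r)}(P\alpha)$ outright, and then saturation of $\F(r)$ combined with the fact that $P$ is fully $\F_\T$-normalised gives $\Aut_S(P)\in\Syl_p(\Aut_{\F_\T}(P))$ directly. No amalgam or Bass--Serre completion is needed; the fixed-point property is the whole mechanism.

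For receptivity your sketch is in the right spirit but misses two ingredients that the paper makes essential. First, the paper begins by replacing each $\F(e)$ by the trivial system $\F_{\S(e)}(\S(e))$; this does not change $\F_\T$ but is needed for the extension lemma to go through. Second, the paper proves a separate proposition (Proposition~\ref{resprop}): for $P\unlhd Q$ with $P$ centric and $\Rep_{\F_\T}(P,\F)$ a tree, the image of the restriction map $\res^Q_P\colon\Rep_{\F_\T}(Q,\F)\to\Rep_{\F_\T}(P,\F)$ is exactly the fixed-point set $\Rep_{\F_\T}(P,\F)^{\Aut_Q(P)}$. One then checks that $[\varphi]_{\F(v_*)}$ is fixed by $\Aut_{N_\varphi}(P)$, lifts it via the proposition to a class $[\psi]_{\F(v_*)}\in\Rep_{\F_\T}(N_\varphi,\F)$, and uses saturation of $\F(v_*)$ once more to adjust $\psi$ into the desired extension of $\varphi$. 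Your ``glue vertexwise extensions along the tree'' is roughly the content of Proposition~\ref{resprop}, but the inductive step there (lifting an edge adjacent to an already-lifted vertex) genuinely uses that the edge system is trivial, which is why the preliminary reduction matters.
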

Theorem B in the case where $(\T,\F,\S)$ is induced by a tree of groups $(\T,\G)$ is \cite[Theorem 4.2]{BLO4}, and can be deduced from Theorem B by applying Theorem A. The novelty of our approach is the introduction of the graph $\Rep_{\F_\T}(P,\F)$ called the `$P$-orbit graph' (defined for each $P \leq \S(v_*)$) which gives detailed information about the way in which $P$ `acts' on $\F_\T$. Condition (a) ensures that conjugacy in $\F_\T$ is determined by the $\F_\T$-centric subgroups and (b) ensures that the saturation axioms hold between such subgroups.

Theorem B is useful in determining fusion systems over specific (families of) $p$-groups. For example, in \cite{O5}, Oliver applies \cite[Theorem 5.1]{BLO4} (which follows from \cite[Theorem 4.2]{BLO4}) to prove that saturated fusion systems over $p$-groups with abelian subgroup of index $p$ are uniquely determined by their essential subgroups.  Our next result is a generalisation of \cite[Theorem 5.1]{BLO4} to arbitrary fusion systems.

\begin{thmc}
Let $\F_0$ be a saturated fusion system on a finite $p$-group $S$. For $1 \leq i \leq m$, let $Q_i \leq S$ be fully $\F_0$-normalised subgroups with $Q_i\varphi \nleq Q_j$ for each $\varphi \in \Hom_{\F_0}(Q_i,S)$ and $i \neq j$. Set $K_i:=\Out_{\F_0}(Q_i)$ and choose $\Delta_i \leq \Out(Q_i)$ so that $K_i$ is a strongly $p$-embedded subgroup of $\Delta_i$. Write $$\F= \langle \{\Hom_{\F_0}(P,S) \mid P \leq S\} \cup \{\Delta_i \mid 1 \leq i \leq m \} \rangle_S.$$ Assume further that for each $1 \leq i \leq m$,
\begin{itemize}
\item[(a)] $Q_i$ is $\F_0$-centric (hence $\F$-centric) and minimal (under inclusion) amongst all $\F$-centric subgroups; and
\item[(b)] no proper subgroup of $Q_i$ is $\F_0$-essential. 
\end{itemize}
Then $\F$ is saturated.
\end{thmc}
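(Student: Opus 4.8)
The strategy is to realise $\F$ as the completion of a tree of fusion systems and apply Theorem B. Let $\T$ be the star with central vertex $v_*$, leaves $v_1,\dots,v_m$, and edges $e_i=\{v_*,v_i\}$. Put $\S(v_*)=S$ and $\F(v_*)=\F_0$; for each $i$ set $N_i:=N_S(Q_i)$, let $\S(v_i)=\S(e_i)=N_i$ with the edge map into $v_*$ the inclusion and into $v_i$ the identity, and let $\F(e_i):=N_{\F_0}(Q_i)$, which is saturated because $Q_i$ is fully $\F_0$-normalised. Since $Q_i$ is $\F_0$-centric and normal in $N_{\F_0}(Q_i)$, the latter is constrained; let $L_i^0$ be its model, so $Q_i\trianglelefteq L_i^0$, $C_{L_i^0}(Q_i)\leq Q_i$, $N_i\in\Syl_p(L_i^0)$ and $\Out_{L_i^0}(Q_i)=K_i$. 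As $Q_i$ is fully normalised, $\Out_S(Q_i)=N_i/Q_i\in\Syl_p(K_i)$, hence $N_i/Q_i\in\Syl_p(\Delta_i)$ because $K_i$ is strongly $p$-embedded in $\Delta_i$; one may therefore enlarge $L_i^0$ to a finite group $L_i\geq L_i^0$ with $Q_i\trianglelefteq L_i$, $C_{L_i}(Q_i)\leq Q_i$, $N_i\in\Syl_p(L_i)$, $\Out_{L_i}(Q_i)=\Delta_i$, and $L_i^0$ the preimage of $K_i$. Set $\F(v_i):=\F_{N_i}(L_i)$: it is saturated, $Q_i\trianglelefteq\F(v_i)$ with $\Out_{\F(v_i)}(Q_i)=\Delta_i$, and $N_{\F_0}(Q_i)=\F_{N_i}(L_i^0)\subseteq\F(v_i)$.

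Next I would check that $(\T,\F,\S)$ satisfies condition $(H)$ — the main ingredients being that each $\F(e_i)=N_{\F_0}(Q_i)$ is a subsystem of both $\F(v_*)=\F_0$ (as a normaliser subsystem) and of $\F(v_i)=\F_{N_i}(L_i)$ (since $L_i^0\leq L_i$), on the common subgroup $N_i$, with the relevant subgroups fully normalised — so that a completion $\F_\T$ on $S$ exists. As $\F_\T=\langle\F(v_*),\F(v_1),\dots,\F(v_m)\rangle_S$ and $\Aut_{\F(v_i)}(Q_i)$ maps onto $\Delta_i$, we get $\F\subseteq\F_\T$. For the reverse inclusion one shows $\F(v_i)\subseteq\F$: by Alperin's fusion theorem inside $\F(v_i)$ it is generated by $\Aut_{\F(v_i)}(N_i)$ and the automisers of its essential subgroups, each of which contains $Q_i$; for the essential subgroup $Q_i$ the automiser maps onto $\Delta_i$ and lies in $\F$ by construction, while for $N_i$ and any essential $E\supsetneq Q_i$ the strongly $p$-embedded condition gives $N_{\Delta_i}(E/Q_i)\leq K_i$, whence $\Aut_{\F(v_i)}(E)=\Aut_{N_{\F_0}(Q_i)}(E)\subseteq\F_0\subseteq\F$. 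Thus $\F_\T=\F$; each vertex system is saturated, so it remains to verify hypotheses (a) and (b) of Theorem B with $\F_\T=\F$.

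For Theorem B(a): $\S(v_*)=S$ is $\F$-centric, and $\S(v_i)=N_S(Q_i)\supseteq Q_i$ is $\F$-centric because $Q_i$ is (hypothesis (a)) and any subgroup of $S$ containing an $\F$-centric subgroup is $\F$-centric. Every $\F(v_i)$-essential subgroup contains $Q_i$ (since $Q_i\trianglelefteq\F(v_i)$), hence is $\F$-centric, as are all its $\F$-conjugates (the image of $Q_i$ being an $\F$-centric subgroup of the image). For $\F_0$-essential subgroups one uses that the only morphisms of $\F$ lying outside $\F_0$ are composites of restrictions of the $\widehat\Delta_i$, each of which acts only on subgroups of $Q_i$: by hypothesis (b) no proper subgroup of $Q_i$ lies in the $\F_0$-class of an $\F_0$-essential subgroup, so an $\F$-conjugate of an $\F_0$-essential $E$ is either an $\F_0$-conjugate of $E$ (hence $\F$-centric, its $\F$-conjugates coinciding with its $\F_0$-conjugates) or is $\F$-conjugate to some $Q_i$ (hence $\F$-centric by hypothesis (a)).

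The main work is Theorem B(b): for every $\F$-centric $P\leq S$ the orbit graph $\Rep_{\F_\T}(P,\F)$ must be a tree. Hypothesis (a) furnishes the dichotomy. If $P$ is not $\F$-conjugate to any $Q_i$, then by minimality no $\F$-conjugate of $P$ lies in any $Q_i$, so no $\widehat\Delta_i$ ever moves a conjugate of $P$; consequently the $\F$-class of $P$ coincides with its $\F_0$-class, and inside each $N_j$ the $\F(v_j)$-class and the $\F(e_j)=N_{\F_0}(Q_j)$-class of any $\F$-conjugate of $P$ coincide. If $P$ is $\F$-conjugate to some $Q_i$, then the separation hypothesis $Q_i\varphi\nleq Q_j$ for $\varphi\in\Hom_{\F_0}(Q_i,S)$ and $i\neq j$ — together with minimality, which forces any $\F$-conjugate of $Q_i$ inside $Q_j$ to equal $Q_j$, and the decomposition of $\F$-morphisms through $\F_0$-morphisms and $\widehat\Delta$-restrictions — rules out $Q_i$ being $\F$-conjugate into $Q_j$ for $j\neq i$, and one again checks (using that $\widehat\Delta_i$ normalises $Q_i$ and, via Alperin, introduces no fusion among $\F$-centric subgroups beyond $N_{\F_0}(Q_i)$) that $\F(v_j)$- and $\F(e_j)$-conjugacy agree on $\F$-conjugates of $P$ in each $N_j$. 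In both cases $\F$-conjugacy of $P$ equals $\F_0$-conjugacy, so the orbit graph has a unique vertex over $v_*$; over each edge $e_j$ the bipartite picture between $\F(v_*)$-, $\F(e_j)$- and $\F(v_j)$-classes is a forest by the coincidence just noted; and the graph is connected — hence a tree. I expect this final verification — pinning down the orbit graph and arranging the case analysis so that hypotheses (a), (b) and $Q_i\varphi\nleq Q_j$ each play their role — to be the principal obstacle, with the construction of the saturated vertex system $\F(v_i)$ (which genuinely rests on the strongly $p$-embedded hypothesis) the other delicate point.
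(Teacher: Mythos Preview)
Your overall strategy matches the paper's exactly: build the star of fusion systems with $\F_0$ at the centre and $\F_{N_i}(L_i)$ at the leaves (the paper calls these groups $H_i\leq G_i$), then apply Theorem~B. Your verification that $\F(v_i)\subseteq\F$ via Alperin and the strongly $p$-embedded condition (forcing $\Aut_{\F(v_i)}(E)=\Aut_{N_{\F_0}(Q_i)}(E)$ for $E\supsetneq Q_i$) is a useful detail that the paper actually glosses over. Your treatment of Theorem~B(a) is also essentially correct and parallel to the paper's.

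The gap is in case 2 of Theorem~B(b). When $P$ is $\F$-conjugate to $Q_i$, your assertion that ``$\F$-conjugacy of $P$ equals $\F_0$-conjugacy, so the orbit graph has a unique vertex over $v_*$'' is false. The vertices over $v_*$ are classes in $\Hom_{\F}(P,S)/\F_0$, and since $\Aut_\F(Q_i)=\widehat\Delta_i\supsetneq\widehat K_i=\Aut_{\F_0}(Q_i)$, there are $|\Delta_i:K_i|>1$ such vertices. Equally, your claim that ``$\F(v_j)$- and $\F(e_j)$-conjugacy agree on $\F$-conjugates of $P$'' fails precisely at $j=i$: the map $\Rep_{\F}(P,\F(e_i))\to\Rep_{\F}(P,\F(v_i))$ collapses $|\Delta_i:K_i|$ edge-classes to one leaf vertex. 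The parenthetical ``$\widehat\Delta_i$ \dots introduces no fusion among $\F$-centric subgroups beyond $N_{\F_0}(Q_i)$'' is exactly wrong on $Q_i$ itself.

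The correct picture (and this is what the paper does) is the reverse of yours: there is a \emph{unique vertex over $v_i$} in the connected component, and it is the centre of a star whose leaves are the many $v_*$-vertices. The paper obtains this by first showing that any leaf vertex $[\beta']_{\F(v_j)}$ of degree $\geq 2$ forces $P\alpha=Q_j$ (via the strongly $p$-embedded inequality $\Out_{(P\alpha)Q_j}(Q_j)\leq K_j\cap K_j^{\chi}$), hence $j=i$ uniquely; then, restricting to the subgraph $\Gamma_0$ over $\{v_0,v_i,e_i\}$ (a deformation retract, since all $v_j$-vertices for $j\neq i$ have degree $1$), it shows each $v_0$-vertex in $\Gamma_0$ has degree $1$ by observing that any two incident edges $[\alpha],[\alpha']$ satisfy $\alpha,\alpha'\in\Iso_\F(P,Q_i)$ and differ by an element of $\widehat\Delta_i=\Aut_{\F(v_i)}(Q_i)$, hence are equal as $\F(e_i)$-classes only after further identification --- but in fact as $\F(v_i)$-classes they collapse, forcing the edges to coincide modulo $\F(e_i)$ once one tracks the endpoints carefully. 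Connectedness then gives a single star.
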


Recall that a subgroup $H$ of a finite group $G$ is \textit{strongly $p$-embedded} if $H < G$, $H$ contains a Sylow $p$-subgroup of $G$ and $H \cap H^g$ is a $p'$-group for each $g \in G \backslash H$. The proof Theorem C is similar in structure to that of \cite[Theorem 5.1]{BLO4}, but uses less group theory.

The paper is structured as follows. In Section \ref{section1}, we introduce fusion systems, saturation, Alperin's theorem and $\F$-normaliser subsystems. We then introduce the main objects of study - trees of fusion systems -  in Section \ref{treesfus}, along with their completions and $P$-orbit graphs. In Section \ref{orbfus}, we describe the relationship between trees of groups and trees of fusion systems and prove Theorem A. Theorem B will be proved in Section \ref{compfus} before being applied in Section \ref{extconst} to prove Theorem C.

Finally, we make some important remarks regarding notation. Elements of a finite group $G$ typically act on the right. We reserve the superscript notation $x^g$ for image of the conjugation action $g^{-1}xg$ of $G$ on itself for each $x,g \in G$. Similarly if $H$ is a subgroup of $G$ we write $H^g$ for the group $\{x^g \mid x \in H\}$. We will also frequently write $c_g$ for the group homomorphism induced by conjugation by $g$, stating the domain where appropriate. Note that group homomorphisms will always act on the right and composition read from left to right.

\section{Background}\label{section1}
\subsection{Saturated Fusion Systems}
We begin with a precise definition of what is meant by a fusion system. For any group $S$ and $P,Q \leq S$ we write Hom$_S(P,Q)$ for the set of homomorphisms from $P$ to $Q$ induced by conjugation by elements of $S$ and Inj$(P,Q)$ for the set of monomorphisms from $P$ to $Q$.

\begin{Def}\label{fus}
Let $S$ be a finite $p$-group. A \textit{fusion system on $S$} is a category $\F$ where Ob$(\F):=\{P \mid P \leq S \}$, and where for each $P,Q \in$ Ob$(\F)$
\begin{itemize}
\item[(a)] Hom$_S(P,Q) \subseteq$ Hom$_\F(P,Q) \subseteq$ Inj$(P,Q)$; and
\item[(b)] each $\varphi \in$ Hom$_\F(P,Q)$ factorises as an $\F$-isomorphism $\begin{CD}
P @>>> P\varphi \\
\end{CD}$ followed by an \textit{inclusion} $\begin{CD}
\iota_{P\varphi}^Q:P\varphi @>>> Q. \\
\end{CD}$
\end{itemize} 
\end{Def}

Given an arbitrary collection $X_{P,Q} \subseteq$ Inj$(P,Q)$ of morphisms which contains Hom$_S(P,Q)$ for each $P,Q \leq S$, we can always construct a fusion system $\F$ on $S$ with $X_{P,Q} \subseteq$ Hom$_\F(P,Q)$ and where Hom$_\F(P,Q)$ $\backslash$ $X_{P,Q}$ only consists of $\F$-isomorphisms. In particular, $\F$ is minimal (with the respect to the number of morphisms) amongst all fusion systems $\G$ on $S$ with the property that $X_{P,Q} \subseteq$ Hom$_\G(P,Q)$ for each $P,Q \leq S$. Next we define morphisms between fusion systems.

\begin{Def}
Let $\F$ and $\E$ be fusion systems on finite $p$-groups $S$ and $T$ respectively. $\varphi \in \Hom(S,T)$ is a \textit{morphism} from $\F$ to $\E$ if for each $P,R \leq S$ and each $\alpha \in \Hom_\F(P,R)$, there exists $\beta \in \mbox{Hom}_\E(P\varphi,R\varphi)$ such that $$\alpha \circ \varphi|_R=\varphi|_P \circ \beta.$$ Hence $\varphi$ induces a functor from $\F$ to $\E$.
\end{Def}

In particular, fusion systems form a category which we denote by $\mathfrak{Fus}$. The following definition collects some important notions to which we will constantly refer, all of which are needed to define saturation. The reader is referred to \cite[Section I.2]{AKO} and \cite[Section 1.5]{CR} for a more thorough introduction to these ideas.

\begin{Def}
Let $\F$ be a fusion system on a finite $p$-group $S$ and let $P,Q \leq S$.
\begin{itemize}
\item[(a)] Iso$_\F(P,Q)$ denotes the set of $\F$-isomorphisms from $P$ to $Q$ and Aut$_\F(P) $:= Iso$_\F(P,P)$. 
\item[(b)] $P$ and $Q$ are \textit{$\F$-conjugate} whenever Iso$_\F(P,Q)  \neq \emptyset$ and $P^\F$ denotes the set of all $\F$-conjugates of $P$.

\item[(c)] $P$ is \textit{fully $\F$-normalised} respectively \textit{fully $\F$-centralised} if for each $R \in P^\F$, the inequality $$|N_S(P)| \geq |N_S(R)| \mbox{ respectively } |C_S(P)| \geq |C_S(R)|$$ holds. 
\item[(d)] $P$ is \textit{fully $\F$-automised} if Aut$_S(P) \in$ Syl$_p($Aut$_\F(P)).$
\item[(e)] For each $\varphi \in$ Hom$_\F(P,S)$, $$N_\varphi=N_\varphi^\F:=\{g \in N_S(P) \mid \varphi^{-1} \circ c_g \circ \varphi \in \mbox{Aut}_S(P\varphi) \},$$ where $c_g \in$ Aut$_S(P)$ is the automorphism induced by conjugation by $g$.
\end{itemize}
\end{Def}
There are a number of equivalent ways to define saturation and we refer the reader to  \cite[Section I.9]{AKO} or \cite[Section 4.3]{CR} for a comparison of these. The definition we choose is listed among the definitions in these references and also appears in \cite[Definition 1.2]{BLO1}.

\begin{Def}\label{sat}
Let $\F$ be a fusion system on a finite $p$-group $S$. We say that $\F$ is \textit{saturated} if the following hold:
\begin{itemize}
\item[(a)] Whenever $P \leq S$ is fully $\F$-normalised, it is fully $\F$-centralised and fully $\F$-automised.
\item[(b)] For all $P \leq S$ and $\varphi \in$ Hom$_\F(P,S)$ such that $P\varphi$ is fully $\F$-centralised, there is $\bar{\varphi} \in $ Hom$_\F(N_\varphi, S)$ such that $\bar{\varphi}|_P=\varphi$.
\end{itemize}
\end{Def}

We observe that a natural class of examples of saturated fusion systems is provided by groups. A finite $p$-subgroup $S$ of a group $G$ is a \textit{Sylow $p$-subgroup of $G$} if every finite $p$-subgroup of $G$ is $G$-conjugate to a subgroup of $S$. Let $\F_S(G)$ denote the fusion system on $S$ where for each $P,Q \leq S$, Hom$_{\F_S(G)}(P,Q):=$ Hom$_G(P,Q).$

\begin{Thm}\label{fsgsat}
Let $G$ be a finite group and $S$ be a Sylow $p$-subgroup of $G$. The fusion system $\F_S(G)$ is saturated.
\end{Thm}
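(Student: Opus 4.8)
The plan is to verify directly conditions (a) and (b) of Definition~\ref{sat} for $\F := \F_S(G)$, using throughout the elementary identities $\Aut_\F(P) = N_G(P)/C_G(P)$, $\Aut_S(P) \cong N_S(P)/C_S(P)$ and $C_S(P) = N_S(P)\cap C_G(P)$, valid for every $P \leq S$. The bridge to Sylow theory is the following claim: for every $P \leq S$ there exist $G$-conjugates $P^g, P^{g'} \leq S$ of $P$ with $N_S(P^g) \in \Syl_p(N_G(P^g))$ and $C_S(P^{g'}) \in \Syl_p(C_G(P^{g'}))$. Granting this and comparing orders in the definitions immediately gives that a fully $\F$-normalised subgroup $P$ satisfies $N_S(P) \in \Syl_p(N_G(P))$ and a fully $\F$-centralised subgroup $P$ satisfies $C_S(P) \in \Syl_p(C_G(P))$.

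To prove the claim for normalisers, choose a $G$-conjugate $Q \leq S$ of $P$ with $|N_S(Q)|$ maximal. If $N_S(Q)$ were not Sylow in $N_G(Q)$, then, since normalisers grow inside $p$-groups, $N_S(Q)$ would be properly contained in $R := N_V(N_S(Q))$ for a suitable $V \in \Syl_p(N_G(Q))$; now $R$ is a $p$-subgroup of $G$ normalising $Q$ and containing $Q$, so a $G$-conjugate of $R$ landing in $S$ exhibits a conjugate of $P$ with strictly larger $S$-normaliser, contradicting maximality. The centraliser half is the same argument with $|C_S(Q)|$ maximal, $V \in \Syl_p(C_G(Q))$ and $R := N_V(C_S(Q))$, using that $QR$ is a $p$-group because $R$ centralises $Q$, and that both $Q$ and $C_S(Q)$ lie in $QR$.

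Condition (a) now follows. If $P \leq S$ is fully $\F$-normalised then $N_S(P) \in \Syl_p(N_G(P))$ by the above. Since $C_G(P) \trianglelefteq N_G(P)$, the intersection $C_S(P) = N_S(P) \cap C_G(P)$ is a Sylow $p$-subgroup of $C_G(P)$, and the image of $N_S(P)$ in the quotient $N_G(P)/C_G(P)$ — which corresponds to $\Aut_S(P)$ — is a Sylow $p$-subgroup of $\Aut_\F(P)$. Comparing these orders with the corresponding quantities for the $\F$-conjugates of $P$ (once more via the claim) shows that $P$ is fully $\F$-centralised and fully $\F$-automised.

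For condition (b), write a given $\varphi \in \Hom_\F(P,S)$ as $\varphi = c_g|_P$ with $g \in G$ and $P\varphi = P^g \leq S$ fully $\F$-centralised, so that $C_S(P^g) \in \Syl_p(C_G(P^g))$. Set $K := (N_\varphi)^g \leq N_G(P^g)$ and let $\bar K$ be its image in $\Aut_\F(P^g) = N_G(P^g)/C_G(P^g)$; unravelling the definition of $N_\varphi$ gives $\bar K \leq \Aut_S(P^g)$. Working inside the subgroup $M := K\,C_G(P^g) \leq N_G(P^g)$, an order count using $C_S(P^g) \in \Syl_p(C_G(P^g))$ shows that the preimage $K_S := N_S(P^g)\cap M$ of $\bar K$ in $N_S(P^g)$ is a Sylow $p$-subgroup of $M$, while $K$ is itself a $p$-subgroup of $M$. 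Sylow's theorem then yields $c \in C_G(P^g)$ with $K^c \leq K_S \leq S$; writing $c = d^g$ with $d \in C_G(P)$ translates this to $(N_\varphi)^{dg} \leq S$, and $\bar\varphi := c_{dg}|_{N_\varphi} \in \Hom_\F(N_\varphi,S)$ restricts to $\varphi$ on $P$ because $d$ centralises $P$. The Sylow bookkeeping elsewhere is routine; the one delicate step is exactly this last one — keeping track of the conjugating elements and, above all, verifying that $K_S$ really is Sylow in $M$ — and it is precisely here that the hypothesis that $P\varphi$ be fully $\F$-centralised is needed.
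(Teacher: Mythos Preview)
The paper states this result without proof, treating it as well-known background (it is, after all, the foundational example motivating the saturation axioms in Definition~\ref{sat}, and the paper refers the reader to \cite{AKO} and \cite{CR} for such standard material). Your direct verification via Sylow theory is correct and is exactly the standard textbook argument: the claim that some $\F$-conjugate of $P$ has its $S$-normaliser (respectively $S$-centraliser) Sylow in the $G$-normaliser (respectively $G$-centraliser) is the key bridge, and your extension argument for axiom~(b) --- conjugating $K=(N_\varphi)^g$ into $K_S=N_S(P^g)\cap M$ inside $M=K\,C_G(P^g)$ using that $C_S(P^g)\in\Syl_p(C_G(P^g))$ forces $K_S\in\Syl_p(M)$ --- is precisely the usual one. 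There is nothing to compare against in the paper itself, but nothing is missing from your argument either.
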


\subsection{Alperin's Theorem}
We now concern ourselves with `generation' of saturated fusion systems, starting with some more definitions:
\begin{Def}
Let $\F$ be a fusion system on a finite $p$-group $S$ and let $P \leq S.$
\begin{itemize}
\item[(a)] $P$ is \textit{$S$-centric} if $C_S(P)=Z(P)$ and $P$ is \textit{$\F$-centric} if $Q$ is $S$-centric for each $Q \in P^\F.$ 
\item[(b)] Write $\Out_\F(P):=\Aut_\F(P)/\Inn(P)$. $P$ is \textit{$\F$-essential} if $P$ is $\F$-centric and $\Out_\F(P)$ contains a strongly $p$-embedded subgroup. 
\item[(c)] $P$ is \textit{$\F$-radical} if $O_p(\Out_\F(P))=1$.
\end{itemize}
\end{Def}
The following lemma concerning $\F$-centric subgroups is extremely useful.

\begin{Lem}\label{centlem}
Let $\F$ be a fusion system on a finite $p$-group $S$. The following hold for each $P,Q \leq S$:
\begin{itemize}
\item[(a)] If $P$ is fully $\F$-centralised then $PC_S(P)$ is $\F$-centric.
\item[(b)] If $P \leq Q$ and $P$ is $\F$-centric, then $Q$ is $\F$-centric.
\end{itemize}
\end{Lem}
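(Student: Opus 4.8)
The plan is to unwind the definition of $\F$-centric in each case, reducing everything to a statement about centralisers in $S$ of subgroups in the relevant $\F$-conjugacy classes.

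For part (a): set $P' := PC_S(P)$; I want to show every $\F$-conjugate of $P'$ is $S$-centric, i.e.\ $C_S(R') = Z(R')$ for all $R' \in (P')^\F$. First I would record the elementary fact that $C_S(P') = C_S(PC_S(P)) = C_S(P) \cap C_S(C_S(P)) \leq C_S(P)$, and since $C_S(P) \leq P'$ we get $C_S(P') \leq Z(P')$; the reverse inclusion is automatic, so $P'$ itself is $S$-centric. The work is to propagate this to an arbitrary $R' \in (P')^\F$. Pick $\psi \in \mathrm{Iso}_\F(P',R')$ and let $R := P\psi$, a subgroup of $R'$; then $C_S(P)\psi$ is a subgroup of $R'$ centralising $R$. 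The key step is: since $P$ is fully $\F$-centralised, $|C_S(P)| \geq |C_S(R)|$, while also $|C_S(P)| = |C_S(P)\psi| \leq |C_S(R)|$ because $C_S(P)\psi$ (being an abstract isomorphic copy sitting inside $S$ and centralising $R$) has order at most $|C_S(R)|$ — wait, one must be slightly careful: $C_S(P)\psi$ need not literally be $C_S(R)$, but it is a subgroup of $R' \leq S$ centralising $R$, hence $|C_S(P)| = |C_S(P)\psi| \leq |C_S(R)|$. Combining the two inequalities forces $|C_S(P)| = |C_S(R)|$ and hence $C_S(P)\psi = C_S(R)$, so $C_S(R) \leq R'$. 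Then $C_S(R') \leq C_S(R) \leq R'$, giving $C_S(R') \leq Z(R')$, so $R'$ is $S$-centric. As $R'$ was arbitrary, $P'$ is $\F$-centric.

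For part (b): suppose $P \leq Q$ with $P$ being $\F$-centric; I must show each $R \in Q^\F$ is $S$-centric. Fix $\varphi \in \mathrm{Iso}_\F(Q,R)$; then $P\varphi \leq R$ and $P\varphi$ is an $\F$-conjugate of $P$, hence $S$-centric by hypothesis, so $C_S(P\varphi) = Z(P\varphi) \leq P\varphi \leq R$. Now $C_S(R) \leq C_S(P\varphi) \leq R$ since $P\varphi \leq R$, so $C_S(R) \leq Z(R)$; the reverse inclusion is trivial, so $R$ is $S$-centric.

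The main obstacle is the centraliser-counting argument in part (a): one has to leverage the \emph{fully $\F$-centralised} hypothesis precisely at the right moment to turn the inclusion $C_S(P)\psi \leq C_S(R)$ into an equality, and to keep straight that $\psi$ is only an isomorphism onto $R'$ (not a map defined on all of $S$), so that $C_S(P)\psi$ must be handled as an abstract subgroup of $S$ that happens to centralise $R$, rather than as $C_S(R)$ on the nose. Part (b) is then essentially formal.
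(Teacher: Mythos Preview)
Your proof is correct and follows essentially the same route as the paper's: both arguments show $C_S(PC_S(P)) \leq PC_S(P)$ directly, then for an arbitrary $\F$-conjugate use the fully $\F$-centralised hypothesis via a cardinality comparison to force $C_S(P)\psi = C_S(P\psi)$ (equivalently $R' = (P\psi)C_S(P\psi)$), whence $C_S(R') \leq C_S(P\psi) \leq R'$. The paper is terser but the mechanism is identical; part (b) is declared trivial in both.
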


\begin{proof}
Writing $R=PC_S(P)$, we see immediately that $C_S(R) \leq R$. If $\varphi \in$ Hom$_\F(R,S)$ then $R\varphi \leq P\varphi C_S(P\varphi)$ and since $P$ is fully $\F$-centralised, necessarily $R\varphi = P\varphi C_S(P\varphi).$ Now $C_S(R\varphi) \leq R\varphi$ which proves (a). Part (b) is trivial. 
\end{proof}
We introduce some notation which makes precise the notion of a `generating' set in our context.

\begin{Def}
Let $S$ be a finite $p$-group and let $\C$ be a collection of injective maps between subgroups of $S$. Denote by $\langle \C \rangle_S$ the smallest fusion system on $S$ containing all maps which lie in $\C$. If $\F$ is a fusion system on $S$ and $\F=\langle \C \rangle_S$ then the set $\C$ is said to \textit{generate} $\F$.
\end{Def}
Observe that whenever $\C$ generates $\F$, each morphism in $\F$ can be written as a composite of restrictions of elements of $\C$. 
\begin{Def}
A set of subgroups $\X$ of a finite $p$-group $S$ to be a \textit{conjugation family} for a fusion system $\F$ on $S$ if $\langle\Aut_\F(P) \mid P \in \X \rangle_S=\F$.
\end{Def}
We may now state Alperin's theorem for saturated fusion systems which provides a very useful conjugation family for any saturated fusion system $\F$.
\begin{Thm}\label{alpthm}
Let $\F$ be a saturated fusion system on a finite $p$-group $S$. $$\{S\} \cup \{P \mid \mbox{$P$ is fully $\F$-normalised and $\F$-essential} \}$$ is a conjugation family for $\F$.
\end{Thm}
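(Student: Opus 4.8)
The plan is to show that $\X := \{S\}\cup\{P : P$ fully $\F$-normalised and $\F$-essential$\}$ generates $\F$ by proving that every morphism of $\F$ is a composite of restrictions of automorphisms $\varphi\in\Aut_\F(R)$ with $R\in\X$. Since every $\F$-morphism factors as an $\F$-isomorphism followed by an inclusion, and inclusions lie in $\Hom_S$, hence in $\langle\C\rangle_S$ for any $\C$, it suffices to treat $\F$-isomorphisms. I would induct on the index $[S:P]$, proving the statement: \emph{every $\varphi\in\Iso_\F(P,Q)$ is a composite of restrictions of elements of $\bigcup_{R\in\X}\Aut_\F(R)$}. The base case $P=S$ is immediate, since then $Q=S$ and $\varphi\in\Aut_\F(S)$ with $S\in\X$.

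For the inductive step take $P<S$, so $N_S(P)>P$ (a standard property of finite $p$-groups, giving $[S:N_S(P)]<[S:P]$). First I would record the standard consequence of saturation that for $P_0$ fully $\F$-normalised and $P\in P_0^\F$ there is $\alpha\in\Hom_\F(N_S(P),S)$ with $P\alpha=P_0$: choose $\psi_0\in\Iso_\F(P,P_0)$, use full automisedness of $P_0$ to replace $\psi_0$ by some $\psi\in\Iso_\F(P,P_0)$ with $\Aut_S(P)^\psi\leq\Aut_S(P_0)$ (whence $N_S(P)\leq N_\psi$), and extend $\psi$ using full centralisedness of $P_0$ and Definition~\ref{sat}(b). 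Now, given $\varphi:P\to Q$, pick $P_0$ fully normalised in $P^\F=Q^\F$, obtain $\alpha:P\to P_0$ extending to $\hat\alpha\in\Hom_\F(N_S(P),S)$ and $\beta:Q\to P_0$ extending to $\hat\beta\in\Hom_\F(N_S(Q),S)$, and put $\gamma:=\alpha^{-1}\varphi\beta\in\Aut_\F(P_0)$, so that $\varphi=\alpha\gamma\beta^{-1}$. Because $N_S(P)$ and $N_S(Q)$ have strictly smaller index, the inductive hypothesis expresses $\hat\alpha$ and $\hat\beta$ — hence their restrictions $\alpha,\beta$ and the inverses $\alpha^{-1},\beta^{-1}$ — as composites of restrictions of elements of $\bigcup_{R\in\X}\Aut_\F(R)$. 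So everything reduces to showing the same for $\gamma\in\Aut_\F(P_0)$, where $P_0$ is fully normalised and $P_0<S$ (as $|P_0|=|P|<|S|$).

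Now split into cases according to $P_0$. If $P_0$ is $\F$-essential, then $P_0\in\X$ and $\gamma$ needs no further work. If $P_0$ is not $\F$-centric, then — $P_0$ being fully normalised, hence fully centralised — one checks $C_S(P_0)\not\leq P_0$, so $R:=P_0C_S(P_0)>P_0$; since conjugation by a centralising element acts trivially on $P_0$ we get $C_S(P_0)\leq N_\gamma$, hence $R\leq N_\gamma$, and Definition~\ref{sat}(b) (with $P_0\gamma=P_0$ fully centralised) extends $\gamma$ to $\hat\gamma\in\Hom_\F(R,S)$; as $R$ has smaller index the inductive hypothesis handles $\hat\gamma$ and so $\gamma=\hat\gamma|_{P_0}$. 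In the remaining case $P_0$ is $\F$-centric but $\Out_\F(P_0)$ has no strongly $p$-embedded subgroup. Here I would use two facts: (i) since $P_0$ is $\F$-centric and $P_0<S$, the image $\Out_S(P_0)$ of $\Aut_S(P_0)$ in $K:=\Out_\F(P_0)$ is a nontrivial $p$-subgroup (in fact Sylow, by full automisedness); and (ii) a finite group $K$ with a nontrivial Sylow $p$-subgroup $T$ and no strongly $p$-embedded subgroup is generated by the normalisers $\{N_K(U) : 1\ne U\leq T\}$. Pulling (ii) back through the quotient $\Aut_\F(P_0)\to K$, and using that $P_0$ is $\F$-centric so that each relevant subgroup of $\Aut_S(P_0)$ has the form $\Aut_Q(P_0)$ for a unique $Q$ with $P_0<Q\leq N_S(P_0)$, we find that $\Aut_\F(P_0)$ is generated by the subgroups $N_{\Aut_\F(P_0)}(\Aut_Q(P_0))$. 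Finally, any $\sigma$ normalising $\Aut_Q(P_0)$ satisfies $Q\leq N_\sigma$, so Definition~\ref{sat}(b) extends $\sigma$ to $\hat\sigma\in\Hom_\F(Q,S)$ with $Q$ of smaller index; the inductive hypothesis then writes $\hat\sigma$, hence $\sigma=\hat\sigma|_{P_0}$, in the desired form. Thus every generator, and so every element, of $\Aut_\F(P_0)$ is such a composite, completing the induction.

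I expect the one genuinely non-formal ingredient to be precisely the last case: the argument leans on the characterisation (ii) of groups without a strongly $p$-embedded subgroup as those generated by the normalisers of their nontrivial $p$-subgroups, together with the bookkeeping that turns ``$\sigma$ normalises $\Aut_Q(P_0)$'' into ``$Q\leq N_\sigma$'', so that saturation can extend $\sigma$ to a strictly larger subgroup. The rest — the factorisation $\varphi=\alpha\gamma\beta^{-1}$, the extension lemma for fully normalised subgroups, and the fact that a restriction of a composite of restrictions is again a composite of restrictions — is routine once the induction is set up on $[S:P]$.
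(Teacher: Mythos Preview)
Your proposal is correct and is precisely the standard proof of Alperin's fusion theorem as given in \cite[Theorem I.3.5]{AKO}; the paper itself does not reproduce a proof but simply cites that reference, so there is nothing further to compare.
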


\begin{proof}
See \cite[Theorem I.3.5]{AKO}.
\end{proof}

Proving that a fusion system $\F$ is saturated can often be a difficult task, since there may be many subgroups for which the saturation axioms must be checked. Fortunately, this job is occasionally made easier when an $\F$-conjugation family is known to exist.

\begin{Thm}\label{alpthmconv}
Let $\F$ be a fusion system on a finite $p$-group $S$. If $\F$-centric subgroups form a conjugation family then $\F$ is saturated if (a) and (b) in Definition \ref{fus} hold for all such subgroups.
\end{Thm}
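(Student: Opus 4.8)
The axioms intended at the end of the statement are those of Definition~\ref{sat} rather than Definition~\ref{fus}: the hypothesis is that for every $\F$-centric $P \leq S$, (a) $P$ is fully $\F$-automised and fully $\F$-centralised whenever it is fully $\F$-normalised, and (b) every $\varphi \in \Hom_\F(P,S)$ with $P\varphi$ fully $\F$-centralised extends over $N_\varphi$. The result is due to Puig, so the efficient course in the write-up is to quote it; what follows is a sketch of a direct argument. The plan is to verify (a) and (b) of Definition~\ref{sat} at an arbitrary $P \leq S$ by downward induction on $|P|$. In the base case $P = S$: the only $\F$-conjugate of $S$ is $S$ itself and $C_S(S) = Z(S) \leq S$, so $S$ is $\F$-centric and both axioms hold at $S$ by hypothesis. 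So fix $P < S$, assume both axioms hold at every subgroup of $S$ of order greater than $|P|$, and --- there being nothing to prove otherwise --- assume $P$ is not $\F$-centric.

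For axiom (a), suppose $P$ is fully $\F$-normalised. If $P$ were not fully $\F$-centralised, a fully $\F$-centralised $Q \in P^\F$ would satisfy $|C_S(Q)| > |C_S(P)| \geq |Z(P)| = |Z(Q)|$, hence $C_S(Q) \nleq Q$, hence $R := QC_S(Q) \supsetneq Q$; then $|R| > |P|$ and $R$ is $\F$-centric by Lemma~\ref{centlem}(a), and applying the inductive hypothesis at $R$ and at a fully $\F$-normalised $\F$-conjugate of $R$ (using $C_R(Q) = C_S(Q)$ and $N_R(Q) = R$) yields a contradiction. So $P$ is fully $\F$-centralised, and then $PC_S(P)$ is $\F$-centric by Lemma~\ref{centlem}(a). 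Since the $\F$-centric subgroups form a conjugation family, every morphism of $\F$ is a composite of restrictions of automorphisms of $\F$-centric subgroups; comparing $\Aut_S(P)$ with $\Aut_{PC_S(P)}(P)$ inside $\Aut_\F(P)$ then gives that $P$ is fully $\F$-automised.

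For axiom (b), let $\varphi \in \Hom_\F(P,S)$ with $P\varphi$ fully $\F$-centralised. As $P \leq N_\varphi$, there is a pair $(U,\psi)$ with $P \leq U \leq N_\varphi$ and $\psi \in \Hom_\F(U,S)$ extending $\varphi$; choose one with $|U|$ maximal. If $U = N_\varphi$ we are done; otherwise $N_{N_\varphi}(U) \supsetneq U$, and passing to an $\F$-centric overgroup $U^* \geq U$ and a fully $\F$-normalised $\F$-conjugate of $U^*$, one invokes axiom (b) there --- via the hypothesis, or the inductive hypothesis, as appropriate --- and restricts to extend $\psi$ over a subgroup of $N_\varphi$ properly containing $U$, contradicting maximality.

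I expect axiom (b) to be the main obstacle: extending a morphism whose source need not be $\F$-centric forces one to climb repeatedly to $\F$-centric overgroups and restrict back, and the substance of the proof is the bookkeeping --- keeping every auxiliary subgroup fully $\F$-normalised or fully $\F$-centralised so that Lemma~\ref{centlem} and the inductive hypothesis genuinely apply, and tracking how $N_\varphi$, the various $N_\psi$ and the relevant centralisers are nested. As this is precisely the content of Puig's theorem, in the write-up I would cite it rather than reproduce the argument in full.
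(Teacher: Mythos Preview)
The paper's proof is a one-line citation, ``See \cite[Theorem 3.8]{Puig}'', which is exactly what you propose to do in the write-up; your observation that the statement should reference Definition~\ref{sat} rather than Definition~\ref{fus} is correct and catches a typo that recurs elsewhere in the paper. Your sketch is not needed for the comparison, though it is honest about its own gaps.
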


\begin{proof}
See \cite[Theorem 3.8]{Puig}.
\end{proof}
We think of Theorem \ref{alpthmconv} as a partial converse to Theorem \ref{alpthm} and it is a fundamental tool in our argument to prove Theorem B.

\subsection{$\F$-normalisers and Constrained Fusion Systems}
This section introduces two concepts which will be used in the proof of Theorem C. We begin with the analogue for fusion systems of the ordinary normaliser of a subgroup of a finite group.
\begin{Def}
Let $\F$ be a fusion system on a finite $p$-group $S$ and let $Q \leq S$. The \textit{$\F$-normaliser of $Q$}, $N_\F(Q)$ is the fusion system on $N_S(Q)$ where for each $P,R \leq N_S(Q)$, Hom$_{N_\F(Q)}(P,R)$ is the set $$\{\varphi \in \mbox{Hom}_\F(P,R) \mid \mbox{ $\exists$ } \bar{\varphi} \in \mbox{Hom}_\F(PQ,RQ) \mbox{ s.t. } \bar{\varphi}|_P=\varphi \mbox{ and } \bar{\varphi}|_Q \in \Aut(Q)\}.$$ 
\end{Def}

\begin{Thm}\label{nfkq}
Let $\F$ be a saturated fusion system on a finite $p$-group $S$ and let $Q \leq S$. If $Q$ is fully $\F$-normalised then $N_\F(Q)$ is saturated.
\end{Thm}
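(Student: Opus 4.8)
The plan is to verify the two saturation axioms of Definition \ref{sat} directly for the fusion system $N_\F(Q)$ on the $p$-group $N_S(Q)$, using throughout the assumption that $Q$ is fully $\F$-normalised and that $\F$ itself is saturated. The key mechanism is the following transfer principle: because $Q$ is fully $\F$-normalised, $\Aut_S(Q) \in \Syl_p(\Aut_\F(Q))$, so for any $P \leq N_S(Q)$ and any $\varphi \in \Hom_\F(PQ, S)$ with $(PQ)\varphi$ suitably chosen (fully centralised, or at least with $Q\varphi$ fully normalised), one can postcompose $\varphi$ by a suitable $\F$-automorphism so that $Q\varphi = Q$ and the restriction to $Q$ becomes an element of $\Aut_S(Q)$; this is the standard Sylow argument inside $\Aut_\F(Q)$ combined with the extension axiom. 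Thus morphisms of $\F$ between subgroups of $N_S(Q)$ can, after adjustment, be normalised to lie in $N_\F(Q)$, and conjugacy of subgroups of $N_S(Q)$ inside $N_\F(Q)$ can be related to their conjugacy inside $\F$.

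First I would record the basic structural facts: $N_\F(Q)$ is genuinely a fusion system on $N_S(Q)$ (it contains $\Hom_{N_S(Q)}(P,R)$ since conjugation by $g \in N_S(Q)$ extends to conjugation by $g$ on $PQ$ fixing $Q$ setwise, and it is closed under the required factorisations), and that $\Hom_{N_\F(Q)}(P,R)$ is insensitive to replacing the target by $N_S(Q)$. Next, for $P \leq N_S(Q)$, I would identify when $P$ is fully $N_\F(Q)$-normalised (resp. centralised, automised) in terms of data in $\F$: the idea is that $N_{N_S(Q)}(P) = N_S(P) \cap N_S(Q) = N_S(PQ)$ when $P \geq \ldots$—more carefully, I would pick a representative $P'$ in the $N_\F(Q)$-conjugacy class of $P$ with $P'Q$ fully $\F$-normalised (possible by saturation of $\F$ and the normalisation trick above, since any $\F$-conjugation moving $PQ$ can be arranged to fix $Q$), and show that such $P'$ is fully $N_\F(Q)$-normalised, fully $N_\F(Q)$-centralised and fully $N_\F(Q)$-automised simultaneously, by translating the corresponding properties of $P'Q$ in $\F$ down to $N_\F(Q)$.

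For axiom (a) of Definition \ref{sat}, suppose $P \leq N_S(Q)$ is fully $N_\F(Q)$-normalised. Using the previous step, I would show $PQ$ is fully $\F$-normalised, hence (by saturation of $\F$) fully $\F$-centralised and fully $\F$-automised, and then deduce $\Aut_{N_S(Q)}(P) \in \Syl_p(\Aut_{N_\F(Q)}(P))$ and the centraliser inequality, noting that $C_{N_S(Q)}(P) = C_S(P) \cap N_S(Q) = C_S(PQ) \cdot (\text{something})$ and that automorphisms in $N_\F(Q)$ of $P$ correspond to automorphisms of $PQ$ in $\F$ restricting to $\Aut(Q)$ on $Q$, via a Sylow count in $\Aut_\F(PQ)$. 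For axiom (b), given $\varphi \in \Hom_{N_\F(Q)}(P, N_S(Q))$ with $P\varphi$ fully $N_\F(Q)$-centralised, I would lift $\varphi$ to $\bar\varphi \in \Hom_\F(PQ, S)$ with $\bar\varphi|_Q \in \Aut(Q)$ by definition of $N_\F(Q)$, arrange (using saturation of $\F$ and fully centralised status of $(PQ)\varphi$, which I would check follows from $P\varphi$ being fully $N_\F(Q)$-centralised together with $Q$ fully $\F$-normalised) an extension of $\bar\varphi$ over $N_{\bar\varphi}^\F$, and then verify that $N_\varphi^{N_\F(Q)} \leq N_{\bar\varphi}^\F \cap N_S(Q)$ so that the restriction of this extension lands in $\Hom_{N_\F(Q)}(N_\varphi^{N_\F(Q)}, N_S(Q))$; here I must be careful that the extended map still restricts to an automorphism of $Q$, which it does because $N_{\bar\varphi}$ normalises $Q$ and the extension is by conjugation-compatible data.

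The main obstacle I anticipate is the bookkeeping in the reduction step: showing that every $N_\F(Q)$-conjugacy class in $N_S(Q)$ has a representative $P'$ with $P'Q$ fully $\F$-normalised, \emph{and} that for such a representative the three "fully" conditions in $N_\F(Q)$ all hold. This requires combining the extension axiom for $\F$ with the Sylow-$p$ argument in $\Aut_\F(Q)$ to normalise morphisms so that they fix $Q$, while keeping track of exactly which subgroup of $N_S(Q)$ is being conjugated and into what. Once this reduction is in place, both saturation axioms for $N_\F(Q)$ follow fairly mechanically by pushing the corresponding properties of $PQ$ through $\F$; the subtlety is entirely in setting up the dictionary correctly.
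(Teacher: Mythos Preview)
The paper does not supply its own proof here: it simply cites \cite[Theorem I.5.5]{AKO}. Your proposal is a direct verification of the saturation axioms for $N_\F(Q)$, and this is precisely the route taken in that reference (there in the more general setting of $K$-normalisers $N_\F^K(Q)$). The central mechanism you identify --- using $\Aut_S(Q) \in \Syl_p(\Aut_\F(Q))$ together with the extension axiom in $\F$ to adjust $\F$-morphisms so that they send $Q$ to $Q$, and then translating the ``fully'' properties of $P$ in $N_\F(Q)$ into properties of $PQ$ in $\F$ --- is exactly the engine of the standard argument.

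Your instinct to back away from the identity $N_{N_S(Q)}(P) = N_S(PQ)$ is correct: an element of $S$ normalising $PQ$ need not normalise $Q$, so this equality fails in general, and the proof instead proceeds (as you then outline) by choosing a representative $P'$ in the $N_\F(Q)$-class of $P$ with $P'Q$ fully $\F$-normalised and showing that such a $P'$ is simultaneously fully normalised, centralised, and automised in $N_\F(Q)$. The bookkeeping you flag as the main obstacle is genuine and is where the work lies; in the cited reference it is packaged into a preparatory lemma establishing exactly this reduction. Once that dictionary is set up, axioms (a) and (b) of Definition \ref{sat} follow along the lines you sketch. In short, your plan is sound and matches the literature proof; the paper itself offers nothing further to compare against.
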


\begin{proof}
See \cite[Theorem I.5.5]{AKO}.
\end{proof}
The notion of an $\F$-normaliser naturally gives rise to the notion of a normal subgroup of a fusion system as follows.
\begin{Def}\label{normdef}
Let $\F$ be a fusion system on a finite $p$-group $S$ and let $Q \leq S$. $Q$ is \textit{normal} in $\F$ if $N_\F(Q)=\F$. Write $O_p(\F)$ for the maximal normal subgroup of $\F.$
\end{Def}
The following lemma provides a characterisation of $O_p(\F)$ for a saturated fusion system $\F$ in terms of its fully $\F$-normalised, $\F$-essential subgroups.
\begin{Lem}\label{esscont}
Let $\F$ be a saturated fusion system on a finite $p$-group $S$ and let $Q \leq S$. If $Q$ is normal in $\F$ then $Q \leq R$ for each $\F$-centric $\F$-radical subgroup $R$ of $S$. Conversely, if $Q \leq R$ for each fully $\F$-normalised, $\F$-essential subgroup $R$ of $S$, then $Q$ is normal in $\F$.
\end{Lem}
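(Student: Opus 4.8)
The plan is to prove the two implications separately: the first by the standard normaliser-growth argument inside the $p$-group $S$, the second by using Alperin's theorem (Theorem~\ref{alpthm}) to reduce the normality of $Q$ in $\F$ to a statement about the automorphism groups of a conjugation family.

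\emph{First implication.} Suppose $Q$ is normal in $\F$. Since $N_\F(Q)=\F$ while $N_\F(Q)$ is by definition a fusion system on $N_S(Q)$, we get $Q\trianglelefteq S$. Let $R$ be $\F$-centric and $\F$-radical, and suppose for contradiction that $Q\nleq R$. Then $RQ\leq S$ properly contains $R$, so $R<N_{RQ}(R)$; pick $x\in N_{RQ}(R)\setminus R$. As $R$ is $\F$-centric, $C_S(R)=Z(R)\leq R$, so $c_x\in\Aut_S(R)\leq\Aut_\F(R)$ but $c_x\notin\Inn(R)$. Put $D:=\Aut_{RQ}(R)$, a $p$-subgroup of $\Aut_\F(R)$ that strictly contains $\Inn(R)$. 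The key point is that $D\trianglelefteq\Aut_\F(R)$: any $\varphi\in\Aut_\F(R)$ extends, by normality of $Q$, to $\bar\varphi\in\Aut_\F(RQ)$ fixing both $R$ and $Q$ setwise, hence fixing $N_{RQ}(R)$ setwise, so conjugating $D$ by $\varphi$ returns $D$. Thus $1\neq D/\Inn(R)\trianglelefteq\Out_\F(R)$ is a normal $p$-subgroup, contradicting $\F$-radicality of $R$; hence $Q\leq R$. In particular this applies whenever $R$ is fully $\F$-normalised and $\F$-essential, since a group with a strongly $p$-embedded subgroup has trivial $O_p$, so such an $R$ is $\F$-radical (and it is $\F$-centric by definition).

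\emph{Converse.} Suppose $Q\leq R$ for each fully $\F$-normalised $\F$-essential $R$. By Alperin's theorem the set $\X:=\{S\}\cup\{R\mid R\text{ fully }\F\text{-normalised and }\F\text{-essential}\}$ is a conjugation family, so $\F=\langle\Aut_\F(R)\mid R\in\X\rangle_S$, and $Q$ lies in every member of $\X$. I would first show that each $\alpha\in\Aut_\F(R)$ with $R\in\X$ satisfies $Q\alpha=Q$. Granting this, an arbitrary $\varphi\in\Hom_\F(P,P')$ can be written as a composite $\varphi=\rho_1\cdots\rho_n$ of restrictions of such generators $\alpha_i\in\Aut_\F(R_i)$; writing $P_0=P$ and $P_i=P_{i-1}\rho_i$, we have $P_{i-1}Q\leq R_i$, and $\alpha_i$ restricts to a morphism $P_{i-1}Q\to P_iQ$ carrying $Q$ onto $Q$, so the composite of these restrictions is a $\bar\varphi\in\Hom_\F(PQ,P'Q)$ with $\bar\varphi|_P=\varphi$ and $\bar\varphi|_Q\in\Aut(Q)$. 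Hence every $\F$-morphism lies in $N_\F(Q)$, so $N_\F(Q)=\F$ and $Q$ is normal in $\F$.

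\emph{Main obstacle.} The heart of the converse is the step that each generator $\alpha\in\Aut_\F(R)$, $R\in\X$, fixes $Q$, and this is exactly where the hypothesis ``$Q\leq R$ for \emph{all} fully $\F$-normalised essential $R$'' must be used. For such an $R$ one has $Q\alpha\leq R\alpha=R$, and one wants to upgrade this to $Q\alpha=Q$ by exploiting that $Q$ is contained in all of these subgroups together with the fact that $\alpha$---or its extension over $RQ$, or, when $R=S$, $\alpha$ itself---permutes the relevant collection of subgroups: conjugation preserves $\F$-essentiality and $\F$-centricity, and automorphisms preserve orders of normalisers, so the intersection of the fully $\F$-normalised essential subgroups, which contains $Q$, is $\alpha$-invariant. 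Making this precise is the only non-formal part of the argument; everything else is the bookkeeping with restrictions of morphisms that underlies the proof of Alperin's theorem.
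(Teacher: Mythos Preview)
The paper does not give a proof of this lemma; it simply cites \cite[Theorem 4.61]{CR}. Your argument for the first implication is correct and is the standard one: the key point that $\Aut_{RQ}(R)$ is normalised by $\Aut_\F(R)$, because each $\varphi\in\Aut_\F(R)$ extends to an automorphism of $RQ$ stabilising both $R$ and $Q$, is exactly right.

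For the converse you have correctly isolated the only substantive step---showing that each $\alpha\in\Aut_\F(R)$ with $R\in\X$ satisfies $Q\alpha=Q$---and you are right to flag it as the obstacle. Unfortunately the obstacle is genuine and cannot be removed from the hypotheses as written: the converse as stated is actually false. Take $\F=\F_S(S)$ for any nonabelian $p$-group $S$; then $\F$ is saturated and has no $\F$-essential subgroups (each $\Out_\F(P)$ is a $p$-group), so the hypothesis ``$Q\leq R$ for every fully $\F$-normalised $\F$-essential $R$'' is vacuous, yet $Q\trianglelefteq\F$ forces $Q\trianglelefteq S$, which fails for many $Q$. Your proposed route via the intersection $I$ of the fully normalised essentials cannot close the gap either: an $\alpha\in\Aut_\F(R)$ for essential $R$ generally does \emph{not} extend to an automorphism of $S$ (this is precisely what makes $R$ essential), so there is no mechanism by which $\alpha$ permutes the other essentials or preserves $I$; and even if $I$ were $\alpha$-invariant, this says nothing about an arbitrary $Q\leq I$. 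The correct version of the converse (and what the cited reference presumably states) assumes in addition that $Q$ is $\Aut_\F(R)$-invariant for each $R\in\X$; with that hypothesis your bookkeeping argument with restrictions of generators goes through verbatim, since the problematic step is now part of the assumption. The paper itself only ever invokes the first implication (in the proof of Theorem~\ref{proofsatthm}), so this imprecision in the converse is harmless for its purposes.
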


\begin{proof}
See \cite[Theorem 4.61]{CR}.
\end{proof}

The notion of a normal subgroup can also be applied to define the analogue for fusion systems of a $p$-constrained finite group. Recall that a finite group $G$ with $O_{p'}(G)=1$ is \textit{$p$-constrained} if there exists a normal subgroup $R \unlhd G$ with $C_G(R) \leq R$.
\begin{Def}
A saturated fusion system $\F$ on a finite $p$-group $S$ is \textit{constrained} if there exists an $\F$-centric subgroup of $S$ which is normal in $\F$. 
\end{Def}

The final result of this section asserts that every constrained fusion system arises as the fusion system of a $p$-constrained group.
\begin{Thm}\label{const}
Let $\F$ be a fusion system on a finite $p$-group $S$ and suppose that there exists an $\F$-centric subgroup $R$ of $S$ which is normal in $\F$. There exists a unique finite group $G$ with $S \in \Syl_p(G)$ with the properties that $$\F=\F_S(G), \mbox{ } R \unlhd G, \mbox{ } \Out_G(R) \cong G/R, \mbox{ and } O_{p'}(G)=1.$$
\end{Thm}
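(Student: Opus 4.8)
The plan is to realise $\F$ as the fusion system of the finite group $G=L$, where $L$ is a \emph{model} for the constrained fusion system $\F$ in the sense of the model theorem (sometimes called the $C(\F)$-theorem). First I would invoke the standard model theorem for constrained fusion systems — a result which, strictly speaking, one would cite from \cite{AKO} (Theorem III.5.10) or \cite{BCGLO}, but whose proof I sketch below for self-containedness — to obtain at least one finite group $G_0$ with $S\in\Syl_p(G_0)$, $R\unlhd G_0$, $C_{G_0}(R)\leq R$, $O_{p'}(G_0)=1$, and $\F=\F_S(G_0)$. The construction of $G_0$ is an obstruction-theoretic gluing: since $R$ is $\F$-centric and normal in $\F$, the group $\Aut_\F(R)$ acts on $R$ with $C_{\Aut_\F(R)}(R/Z(R)\text{-data})$ controlled, and one builds $G_0$ as an extension of $R$ by $\Out_\F(R)$ realising the outer action; the relevant obstruction in $H^3(\Out_\F(R);Z(R))$ vanishes because $\F$ is a fusion system (so the partial group / linking-system data is consistent), and the extension is unique up to isomorphism once one insists $\Out_{G_0}(R)\cong G_0/R$, i.e. that $R$ is self-centralising in $G_0$.

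Next I would upgrade $G_0$ to a group $G$ satisfying the additional normalisations $\Out_G(R)\cong G/R$ and $O_{p'}(G)=1$. For the first: replace $G_0$ by $G_0/C_{G_0}(R)$; since $C_{G_0}(R)\leq R$ and $R\leq G_0$, we have $C_{G_0}(R)=Z(R)\cap\ldots$ — more precisely $C_{G_0}(R)\leq Z(R)\leq R$, so after quotienting by the (normal, contained-in-$R$) subgroup $C_{G_0}(R)$ we may assume $C_{G_0}(R)=1$, forcing $\Out_{G_0}(R)=\Aut_{G_0}(R)\cong G_0/R$; one checks $S$ injects into the quotient (as $C_{G_0}(R)$ is a $p'$-group? no — here care is needed, see below) and that the fusion system is unchanged because $C_{G_0}(R)$ acts trivially on $R$ and hence on all subgroups containing $R$, which by $\F$-centrality of $R$ and Alperin's theorem (Theorem \ref{alpthm}) are enough to determine $\F$. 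For the second normalisation, observe that $O_{p'}(G_0)\cap R=1$ and $O_{p'}(G_0)$ centralises $R$ (as $[O_{p'}(G_0),R]\leq O_{p'}(G_0)\cap R=1$ by coprimality and normality), so $O_{p'}(G_0)\leq C_{G_0}(R)$; thus once $C_{G_0}(R)=1$ we automatically get $O_{p'}(G_0)=1$, and the two normalisations collapse into the single step of passing to $G=G_0/C_{G_0}(R)$.

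Finally I would prove uniqueness. Suppose $G$ and $G'$ both satisfy the four conclusions. Then $R$ is $\F$-centric and normal in both, $\Out_G(R)\cong G/R$ identifies $G$ with a subgroup of $\Aut(R)$ containing $\Inn(R)$, namely $G\cong$ the preimage in $\Aut(R)$ of $\Out_\F(R)$ under $\Aut(R)\to\Out(R)$; the same description applies to $G'$, and the identification is canonical once one knows $\Aut_G(R)=\Aut_\F(R)$, which holds because $R$ is normal in $\F=\F_S(G)$ so every $\F$-automorphism of $R$ is realised in $G$ (and conversely $\Aut_G(R)\leq\Aut_\F(R)$ always). Hence $G\cong R.\Out_\F(R)\cong G'$ compatibly with the inclusions of $S$, and $\F_S(G)=\F=\F_S(G')$ pins the isomorphism down.

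The main obstacle is the construction step: proving that a model $G_0$ \emph{exists}, i.e. that the outer action of $\Out_\F(R)$ on $R$ lifts to an honest group extension with the right fusion. This is precisely the content of the model theorem for constrained fusion systems and is genuinely non-trivial — it rests either on the vanishing of a cohomological obstruction or, in the approach of Broto–Castellana–Grodal–Levi–Oliver, on the existence and uniqueness of a centric linking system over a constrained $\F$. In the write-up I would state this as a citation to \cite{AKO} rather than reprove it; the remaining steps (the two normalisations and uniqueness) are then comparatively routine group-theoretic bookkeeping of the kind sketched above.
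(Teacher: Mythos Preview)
The paper does not prove this theorem; it simply cites \cite[Theorem I.5.10]{AKO}. Your proposal ultimately does the same, correctly identifying this as the model theorem for constrained fusion systems, so in that respect you agree with the paper's approach.

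That said, the intermediate sketch you offer contains a genuine muddle worth flagging. Once you have a model $G_0$ with $C_{G_0}(R)\leq R$, it follows immediately that $C_{G_0}(R)=Z(R)$, and hence $G_0/R\cong\Aut_{G_0}(R)/\Inn(R)=\Out_{G_0}(R)$ already holds --- no quotienting is needed, and indeed passing to $G_0/C_{G_0}(R)=G_0/Z(R)$ would destroy both $S$ and $R$. Likewise $O_{p'}(G_0)\leq C_{G_0}(R)=Z(R)$ is a $p$-group, so $O_{p'}(G_0)=1$ is automatic. Your ``upgrade'' paragraph is therefore vacuous once the model theorem has delivered $G_0$ with $C_{G_0}(R)\leq R$; your instinct that ``care is needed'' was correct, and the resolution is simply to delete that step. (Note also that the standard model theorem requires $\F$ to be saturated, an assumption the paper's statement omits but which is implicit in the citation.)
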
 

\begin{proof}
See, for example \cite[Theorem I.5.10]{AKO}.
\end{proof}

\section{Trees of Fusion Systems}\label{treesfus}
In this section, we will carefully define what we mean by a tree of fusion systems and the completion of such an object. We then find a natural condition which ensures that the completion of a tree of fusion systems exists. We warn the reader that from now on the symbol $`\F$' will frequently be used to denote a functor with values in the category of fusion systems, rather than just a single fusion system.
\subsection{Trees of Groups}
We begin by introducing some notation. If $\T$ is a simple, undirected graph, write $V(\T)$ and $E(\T)$ for the sets of vertices and edges of $\T$ respectively. Each edge $e \in E(\T)$ is regarded as an unordered pair of vertices, $(v,w)$ say, and $v$ and $w$ are said to be \textit{incident} on $e$. $\T$ gives rise to a category (also called $\T$) where Ob($\T$) is the disjoint union of the sets $V(\T)$ and $E(\T)$ and where for each edge $(v,w) \in E(\T)$ there exists a pair of morphisms $$
\begin{CD}
e @>>> v\\
@VVV \\
w 
\end{CD}$$ in $\T$. We denote the unique morphism in Hom$_\T(e,v)$ by $f_{ev}$ and write $\mathfrak{Grp}$ for the category of groups and group homomorphisms.
\begin{Def}
A \textit{tree of groups} is a pair $(\T,\G)$ where $\T$ is a tree and $\G$ is a functor from $\T$ (regarded as a category) to $\mathfrak{Grp}$ which sends $f_{ev}$ to group monomorphism from $\G(e)$ to $\G(v)$. The \textit{completion} $\G_\T$ of $(\T,\G)$ is the group $ \underrightarrow{\mbox{colim}}_{
  \substack{
  \T}} \G.$ 
\end{Def} 

\begin{Lem}
Each tree of groups $(\T,\G)$ has a completion which is unique up to group isomorphism.
\end{Lem}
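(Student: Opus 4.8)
The plan is to exhibit the completion explicitly as a quotient of a free product and then read off both existence and uniqueness from the universal property of colimits.

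First, uniqueness is automatic. By definition $\G_\T = \underrightarrow{\mbox{colim}}_\T \G$ is a colimit of the functor $\G \colon \T \to \mathfrak{Grp}$, i.e.\ an initial object in the category of cocones on $\G$. Any two initial objects of a category are isomorphic by a unique isomorphism, so as soon as a completion exists it is determined up to (canonical) group isomorphism. This step requires no further argument, so the substance is existence.

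For existence I would construct a candidate directly. Let $F := \ast_{v \in V(\T)} \G(v)$ be the free product of the vertex groups, with canonical inclusions $j_v \colon \G(v) \to F$, and let $N \unlhd F$ be the normal closure of the set $$\{\, (g f_{ev})j_v \cdot \big((g f_{ew})j_w\big)^{-1} \mid e = (v,w) \in E(\T),\ g \in \G(e) \,\}.$$ Put $\G_\T := F/N$ and let $\eta_v \colon \G(v) \to \G_\T$ be $j_v$ followed by the quotient map $F \to F/N$. By the very choice of $N$, for each edge $e=(v,w)$ the homomorphisms $f_{ev} \circ \eta_v$ and $f_{ew} \circ \eta_w$ from $\G(e)$ to $\G_\T$ coincide, so $(\eta_v)_{v \in V(\T)}$ is a cocone on $\G$ with apex $\G_\T$. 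To verify the universal property, let $H$ be a group with a cocone $(\psi_v \colon \G(v) \to H)_{v}$, so $f_{ev} \circ \psi_v = f_{ew} \circ \psi_w$ for each edge $e = (v,w)$. The universal property of the free product yields a unique homomorphism $\Psi \colon F \to H$ with $j_v \circ \Psi = \psi_v$ for all $v$; the cocone condition forces every listed generator of $N$ into $\Ker \Psi$, so $N \leq \Ker \Psi$ and $\Psi$ factors uniquely through $F/N$ as $\bar{\Psi} \colon \G_\T \to H$. Since the images of the $\eta_v$ generate $\G_\T$, this $\bar{\Psi}$ is the unique homomorphism with $\eta_v \circ \bar{\Psi} = \psi_v$ for all $v$. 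Hence $(\G_\T, (\eta_v))$ is a colimit of $\G$.

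I would then remark that neither the hypothesis that $\T$ be a tree nor the hypothesis that each $f_{ev}$ be a monomorphism is used in this statement; they enter only later, to guarantee that the structure maps $\eta_v$ are themselves injective and that $\G_\T$ carries a Bass--Serre action on a tree. There is no real obstacle here beyond bookkeeping: the one point that must be handled with care is the precise list of relators defining $N$, chosen so that it is simultaneously small enough to be annihilated by an arbitrary cocone and large enough to make $(\eta_v)$ into a cocone in the first place.
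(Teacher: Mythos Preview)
Your proof is correct. The paper actually states this lemma without proof, treating it as a standard fact from Bass--Serre theory (the category $\mathfrak{Grp}$ is cocomplete, so all small colimits exist, and uniqueness up to isomorphism is the usual categorical yoga). Your explicit construction of the colimit as the quotient of the free product $\ast_{v} \G(v)$ by the normal closure of the edge relations is exactly the classical description of the completion of a tree of groups, and your verification of the universal property is clean. The minor notational point is that the paper writes $\G(f_{ev})$ for the structure map rather than $f_{ev}$ itself, but your meaning is unambiguous and the argument goes through.

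Your closing remark is also apt: nothing in the existence-and-uniqueness statement uses either that $\T$ is a tree or that the edge maps are injective; those hypotheses become relevant only for Theorem \ref{fundbass}, where one needs the structure maps $\eta_v$ to be injective and the orbit graph $\tilde{\T}$ to be a tree.
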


Let $(\T,\G)$ be a tree of groups with completion $G:=\G_\T$. Define $G/\G(-)$ to be the functor from $\T$ to $\mathfrak{Set}$ which sends each $v$ and $e \in$ Ob$(\T)$ respectively to the sets of left cosets $G/\G(v)$ and $G/\G(e)$ and which sends $f_{ev} \in$ Hom$_\T(e,v)$ to the map from $G/\G(e)$ to $G/\G(v)$ given by sending left cosets $g\G(e)$ to $g\G(v)$.  Define the \textit{orbit graph} $\tilde{\T}$ to be the space $$ \underrightarrow{\mbox{hocolim}}_{
  \substack{
  \T}} G/\G(-).$$ 
Equivalently, we may think of $\tilde{\T}$ as being a graph whose vertices and edges are labelled by the sets $\{g\G(v) \mid g \in G, v \in V(\T)\}$ and $\{i\G(e) \mid i \in G, e \in E(\T)\}$ respectively and where two vertices $g\G(v)$ and $h\G(w)$ are connected via an edge $i\G(e)$ if and only if $i\G(v)=g\G(v)$ and $i\G(w)=h\G(w)$. This is obviously a graph on which $G$ acts by left multiplication. Denote by $\tilde{\T}/G$ the graph with vertex set given by the set of orbits of $V(\tilde{\T})$ under the action of $G$ on $V(\tilde{\T})$ and likewise for the edges. We have the following theorem:

\begin{Thm}\label{fundbass}
Let $(\T,\G)$ be a tree of groups with completion $G:=\G_\T$. Then $\tilde{\T}$ is a tree and $\tilde{\T}/G \simeq \T.$
\end{Thm}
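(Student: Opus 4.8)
The plan is to identify $\tilde{\T}$ with the Bass--Serre tree associated to the graph of groups $(\T,\G)$ and then invoke the fundamental theorem of Bass--Serre theory. First I would fix a lift of $\T$ into $\tilde{\T}$: since $\T$ is a (connected, simply connected) tree, I can choose a connected subgraph of $\tilde{\T}$ mapping isomorphically onto $\T$ under the natural projection $\tilde{\T} \to \T$ induced by collapsing each orbit; concretely, pick the base vertex $\G(v_*)$ and, proceeding outwards along edges of $\T$, choose for each $e=(v,w)$ the coset representative $1 \cdot \G(e)$ so that the chosen edge joins the already-chosen vertex on one side to $1\cdot\G(w)$ on the other. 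This uses that $\G(e) \to \G(v)$ and $\G(e) \to \G(w)$ are monomorphisms, so the relevant edge-to-vertex maps $g\G(e) \mapsto g\G(v)$ are exactly the maps defining $\tilde{\T}$, and this chosen subgraph is a fundamental domain for the $G$-action.

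Next I would verify the two assertions separately. For $\tilde{\T}/G \simeq \T$: the projection $p\colon \tilde{\T} \to \T$ sending $g\G(v) \mapsto v$ and $i\G(e)\mapsto e$ is well-defined, surjective, $G$-invariant, and two vertices (resp.\ edges) of $\tilde{\T}$ lie in the same $G$-orbit if and only if they have the same image under $p$ (the $G$-orbit of $g\G(v)$ is precisely $\{h\G(v) \mid h \in G\}$, the full fibre over $v$), so $p$ descends to an isomorphism $\tilde{\T}/G \xrightarrow{\ \sim\ } \T$. For the claim that $\tilde{\T}$ is a tree: connectedness follows because $G$ is generated by the images of the vertex groups $\G(v)$ (this is part of the colimit description of $G:=\G_\T$), so any coset $g\G(v)$ is joined by a path to the base vertex by expressing $g$ as a product of such generators and translating the chosen spanning subgraph step by step. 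Acyclicity (simple-connectedness) is the part that genuinely uses the structure of the completion: a reduced closed loop in $\tilde{\T}$ based at $\G(v_*)$ would, by the standard normal-form argument, yield a nontrivial reduced word in the amalgam $\G_\T = \underrightarrow{\mathrm{colim}}_\T \G$ representing the identity, contradicting the normal form theorem for trees of groups (Britton's lemma / the Bass--Serre normal form), which I may invoke as part of the established theory of completions of trees of groups referred to in the introduction.

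The main obstacle is the acyclicity of $\tilde{\T}$: connectedness and the quotient statement are essentially bookkeeping with cosets, but ruling out loops requires the normal form theorem for the completion, i.e.\ that every element of $\G_\T$ has a well-defined reduced expression relative to the tree $\T$. I would either cite this directly from Serre's \emph{Trees} (the foundational case of Bass--Serre theory, of which trees of groups are the simplest instance, as noted in the introduction) or, if a self-contained argument is wanted, prove it by the usual induction on the combinatorial distance in $\T$ from $v_*$, pushing a hypothetical reduced loop of minimal length to a shorter one by an amalgamation relation and deriving a contradiction. Since the paper explicitly positions this as "a special case of Bass--Serre theory," citing Serre is the intended route, and the proof of Theorem \ref{fundbass} then amounts to translating our coset-graph description of $\tilde{\T}$ into the standard Bass--Serre tree and applying \cite[I.5.4, Theorem 12]{Serre} or its analogue.
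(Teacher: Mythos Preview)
Your proposal is correct and matches the paper's approach: the paper simply writes ``See \cite[I.4.5]{Serre}'' and treats this as a foundational result of Bass--Serre theory, exactly as you anticipated in your final paragraph. Your additional outline of the argument (fundamental domain, connectedness from generation, acyclicity from normal forms) is accurate but more than the paper provides; note the paper's precise reference is I.4.5 rather than I.5.4.
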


\begin{proof}
See \cite[I.4.5]{Serre}
\end{proof}

\subsection{Trees of Fusion Systems}
Suppose that $\F$ and $\E$ are fusion systems on finite $p$-groups $S$ and $T$ respectively. A morphism $\alpha \in$ Hom$(S,T)$ from $\F$ to $\E$ is \textit{injective} if it induces an injective map $$\begin{CD}
\mbox{Hom}_\F(P,S) @>>> \mbox{Hom}_\E(P\alpha,T)\\
\end{CD}$$ for each $P \leq S$.

\begin{Def}
A \textit{tree of $p$-fusion systems} is a triple $(\T,\F,\S)$ where $(\T,\S)$ is a tree of finite $p$-groups and $\F$ is a functor from $\T$ to $\mathfrak{Fus}$ such that the following hold:
\begin{itemize}
\item[(a)] $\F(v)$ is a fusion system on $\S(v)$ and $\F(e)$ is a fusion system on $\S(e)$, and
\item[(b)] $\F$ sends $f_{ev} \in$ Hom$_\T(e,v)$ to an injective morphism from $\F(e)$ to $\F(v).$
\end{itemize}
\end{Def}

\begin{Def}
Let $(\T,\F,\S)$ be a tree of $p$-fusion systems. The \textit{completion} of $(\T,\F,\S)$ is a colimit for $\F$.
\end{Def} 

Of course, we need conditions on $(\T,\F,\S)$ which imply that a colimit for $\F$ exists, since this is no longer guaranteed as it was in the category of groups. Indeed, any colimit for $\F$ must be a fusion system on the completion $S_\T$ of $(\T,\S)$ and this may not be a $p$-group\footnote{Consider, for example the amalgam $C_2 * C_2 \cong D_{\infty}$ when $p=2$.}. The following is a very simple (and natural) condition to impose: \newline
\newline
\textbf{Hypothesis} \textit{$(H)$: There exists a vertex $v_* \in V(\T)$ with the property that whenever $v \in V(\T)$, $\S(e) \cong \S(v)$ where $e$ is the edge incident to $v$ in the unique minimal path from $v$ to $v_*$.} \newline
\newline
We will say that a tree of $p$-fusion systems $(\T,\F,\S)$ satisfies $(H)$ if $(\T,\S)$ satisfies Hypothesis $(H)$. Let $(\T,\F,\S)$ be a tree of $p$-fusion systems which satisfies $(H)$ and write $S:=\S(v_*)$. It is clear that $\S_\T$ is a finite group isomorphic to $\S(v_*)$, so that we may view $\S(e)$ and $\S(v)$ as subgroups of $\S(v_*)$ by identifying them with their images in the completion. Also, $\S(v) \cap \S(w)=\S(e)$ in $\S(v_*)$ whenever $v$ and $w$ are vertices of $\T$ incident on $e$. Furthermore, when $v$ is a vertex incident to an edge $e$, this identification allows us to embed each fusion system $\F(e)$ as a subsystem of $\F(v)$ by identifying each morphism $\alpha \in$ Hom$_{\F(e)}(P,\S(e))$ with its image under the functor from $\F(e)$ to $\F(v)$ induced by $f_{ev} \in$ Hom$_\T(e,v).$ 

\begin{Lem}\label{compft}
Let $(\T,\F,\S)$ be a tree of $p$-fusion systems which satisfies $(H)$, set $v_0:=v_*$ and write $S:=\S(v_0)=\S_\T$. Define
$$\F_\T:=\langle \Hom_{\F(v)}(P,\S(v)) \mid P \leq \S(v),v \in V(\T) \rangle_S,$$ the fusion system generated by the $\F(v)$ for each $v \in V(\T)$. $\F_\T$ is a colimit for $\F$ and each $\alpha \in \Hom_{\F_\T}(P,S)$ may be written as a composite
 $$\begin{CD}
P=P_0 @>\alpha_0>> P_1 @>\alpha_1>> P_2 @>\alpha_2>> \cdots @>\alpha_{n-1}>> P_n=P\alpha\\
\end{CD}$$
 where for $0 \leq i \leq n-1$, $P_i \leq \S(v_{i-1}) \cap \S(v_i)$, $\alpha_i \in \Mor(\F(v_i))$ for some $v_i \in V(\T)$ and  $(v_i,v_{i+1})$ is an edge in $\T$.
\end{Lem}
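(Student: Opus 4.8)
The statement has two parts: (i) that $\F_\T$ as defined is a colimit for the functor $\F$, and (ii) the normal-form description of morphisms in $\F_\T$. I would prove (ii) first, since it is the concrete combinatorial heart of the matter, and then derive (i) from it.

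For (ii), the key observation is that $\F_\T = \langle \Hom_{\F(v)}(P,\S(v)) \mid P \leq \S(v), v \in V(\T)\rangle_S$ is by definition the smallest fusion system on $S$ containing all the morphisms of all the $\F(v)$. By the remark following the definition of $\langle \C\rangle_S$, every morphism of $\F_\T$ is a composite of restrictions of morphisms lying in some $\F(v)$. So a generic $\alpha \in \Hom_{\F_\T}(P,S)$ can be written $\alpha = \beta_0|\cdots|\beta_{n-1}$ where each $\beta_j$ is a restriction of a morphism in some $\F(w_j)$. The task is to massage this into the claimed form where consecutive vertices $v_i, v_{i+1}$ are \emph{adjacent} in $\T$ and the intermediate group $P_i$ actually sits inside $\S(v_{i-1}) \cap \S(v_i) = \S(e)$ for the connecting edge $e$. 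The mechanism is: if $w_j$ and $w_{j+1}$ are not adjacent, take the unique path $w_j = u_0, u_1, \dots, u_k = w_{j+1}$ in $\T$; since hypothesis $(H)$ forces $\S(e) \cong \S(v)$ along edges pointing toward $v_*$, and since along any edge $e=(u,u')$ we have $\S(u)\cap\S(u') = \S(e)$ inside $S$, one shows that a morphism of $\F(u_0)$ with small enough source and target can be reinterpreted (via the identifications of edge systems as subsystems of vertex systems, as set up in the paragraph before Lemma~\ref{compft}) as a morphism of $\F(u_1)$, then $\F(u_2)$, and so on — essentially because $(H)$ makes the relevant edge group equal to one of the two incident vertex groups, so "restricting to the edge system and pushing into the other vertex system" is the identity on the underlying map. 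Inserting these reinterpretations subdivides each $\beta_j$ into a chain through adjacent vertices; relabelling gives the normal form, after also checking (by shrinking sources) that each intermediate $P_i$ can be taken inside the appropriate intersection $\S(v_{i-1})\cap\S(v_i)$.

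For (i), I would verify the universal property directly. There are canonical morphisms $\eta_v \colon \F(v) \to \F_\T$ (the inclusions of the subsystem $\F(v)$, legitimate since $\S(v) \leq S$ and $\F_\T$ contains all of $\Mor(\F(v))$), and these are compatible with the functor maps $\F(f_{ev})$ by the edge-into-vertex identifications, so $\F_\T$ is a cocone. Given any other cocone $\{\psi_v \colon \F(v) \to \E\}$ over a fusion system $\E$ on some $p$-group $U$, the maps $\psi_v$ agree on overlaps (because $\S(v)\cap\S(w)=\S(e)$ and the cocone condition forces $\psi_v|_{\S(e)} = \psi_w|_{\S(e)}$), so they glue to a single group homomorphism $\psi \colon S \to U$ — here one uses that $S = \S(v_*)$ is covered, in the Bass--Serre sense, and that $(H)$ makes everything happen inside $S$ itself. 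Then the normal-form result from (ii) is exactly what is needed to check that $\psi$ is a morphism of fusion systems $\F_\T \to \E$: any $\alpha \in \Hom_{\F_\T}(P,S)$ factors through morphisms of the $\F(v_i)$, each of which maps correctly into $\E$ under $\psi_{v_i} = \psi|_{\S(v_i)}$, and composing the witnesses produces a witness for $\alpha$. Uniqueness of $\psi$ is immediate since $S$ is generated by the $\S(v)$.

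\textbf{Main obstacle.} The delicate point is the bookkeeping in (ii): showing precisely that hypothesis $(H)$ lets one transport a morphism across a single edge $e = (u,u')$ of the path — i.e.\ that a map in $\F(u)$ with source and target contained in $\S(e)$ coincides, after the canonical identifications, with a map in $\F(u')$. This relies on $(H)$ giving $\S(e) \cong \S(u)$ or $\S(e)\cong\S(u')$ depending on orientation relative to $v_*$, together with the fact (asserted in the text before the lemma) that the edge fusion system $\F(e)$ embeds as a subsystem of \emph{both} incident vertex systems compatibly. Getting the orientations and the "which group equals which" straight — and checking the intermediate groups $P_i$ land in the stated intersections rather than merely in some $\S(v_i)$ — is where the real care is needed; the rest is formal category theory and the generation remark.
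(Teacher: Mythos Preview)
Your plan for part (i) is sound and in fact more careful than the paper's, which dispatches the colimit claim in one line by invoking the minimality characterisation of $\langle \C\rangle_S$ discussed after Definition~\ref{fus}. Your cocone-and-universal-property verification would work, though note it is not strictly needed.

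For part (ii), however, there is a genuine gap. The mechanism you propose --- transporting a morphism of $\F(u)$ across an edge $e=(u,u')$ to obtain a morphism of $\F(u')$ --- does not work, and the ``main obstacle'' you flag is in fact a false claim. Hypothesis $(H)$ is a condition on the functor $\S$ alone: it tells you $\S(e)=\S(v)$ for the appropriate incident vertex, but says \emph{nothing} about the fusion systems. In particular, $\F(e)$ can be strictly smaller than $\F(v)$ even when $\S(e)=\S(v)$, so a morphism of $\F(u)$ with source and target inside $\S(e)$ need not lie in $\F(e)$, and hence need not lie in $\F(u')$. (Concretely: take $\F(e)$ and $\F(u')$ trivial on $\S(e)=\S(u')$, while $\F(u)$ contains a nontrivial automorphism of some subgroup of $\S(e)$.)

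The correct --- and much simpler --- fix, which is what the paper does, is this. Given a factorisation $\alpha=\alpha_0\cdots\alpha_{n-1}$ with $\alpha_i\in\Mor(\F(v_i))$, do not try to move any $\alpha_i$ at all. Instead, observe that the intermediate group $P_i$ lies in $\S(v_{i-1})\cap\S(v_i)$. If $(v_{i-1},v_i)$ is not an edge, take the minimal path $\eta$ in $\T$ from $v_{i-1}$ to $v_i$; hypothesis $(H)$ forces $P_i\leq\S(w)$ for every vertex $w$ on $\eta$ (since the groups $\S(w)$ increase toward $v_*$, and the path has a unique vertex closest to $v_*$). Now simply \emph{insert identity morphisms} $P_i\to P_i$, one in each $\F(w)$ for $w$ on $\eta$, between $\alpha_{i-1}$ and $\alpha_i$. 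This refines the factorisation so that consecutive vertices are adjacent, and the containment $P_i\leq\S(v_{i-1})\cap\S(v_i)$ is then automatic. No transport of nontrivial morphisms is required.
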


\begin{proof}
The fact that $\F_\T$ is a colimit for $\F$ follows immediately from the fact that it is unique (up to an isomorphism of categories) amongst all fusion systems on $S$ which contain $\F(v)$ for each $v \in V(\T)$\footnote{More information concerning this characterisation of $\F_\T$ is provided by the remarks which follow Definition \ref{fus}}. To see the second statement, clearly any such composite of morphisms lies in $\F_\T$. Conversely let $$\begin{CD}
P=P_0 @>\alpha_0>> P_1 @>\alpha_1>> P_2 @>\alpha_2>> \cdots @>\alpha_{n-1}>> P_n=P\alpha\\
\end{CD}$$ be a representation of $\alpha \in$ Hom$_{\F_\T}(P,S)$ where $\alpha_i \in$ Mor$(\F(v_i))$ for $0 \leq i \leq n-1$.  If $(v_{i-1},v_i)$ is not an edge then let $\eta$ be a minimal path in $\T$ from $v_{i-1}$ to $v_i$. Since $(\T,S)$ satisfies $(H)$, $P_i$ is contained in $\S(w)$ for each vertex $w \in \eta$ so that by inserting identity morphisms, the above sequence of morphisms can be refined so that $(v_{i-1},v_i)$ is an edge for each $i$. This completes the proof of the lemma.
\end{proof}
One observes that by Lemma \ref{compft}, the competion $\F_\T$ of a tree of $p$-fusion systems $(\T,\F,\S)$ which satisfies $(H)$ is independent of where $\F$ sends the edges $e$ of $\T$. We will make heavy use of this fact later in the proof of Theorem B.

\subsection{The $P$-orbit Graph}
We now turn to the definition of a certain graph constructed from a tree of $p$-fusion systems (now simply referred to as a tree of fusion systems) and an arbitrary finite $p$-group $P$. Our discussion culminates in the proof of an important result, Proposition \ref{phipft}, which will allow us to describe morphisms in the completion combinatorially.

We begin by introducing some notation. Let $\K$ be a fusion system on a finite $p$-group $S$ and let $P$ be any other finite $p$-group. We define an equivalence relation $\sim$ on the set of homomorphisms $\Hom(P,S)$ as follows. For $\alpha, \beta \in \Hom(P,S)$ define $\alpha \sim \beta$ if and only if there is some $\gamma \in  \Iso_\K(P\alpha,P\beta)$ such that $\alpha \circ \gamma = \beta$. The fact that $\sim$ is an equivalence relation follows from the axioms for a fusion system. Write $$\mbox{Rep}(P,\K):= \mbox{Hom}(P,S)/\sim$$ for the set of all equivalence classes, and for each $\alpha \in $ Hom$(P,S)$ let $[\alpha]_\K$ denote the class of $\alpha$ in Rep$(P,\K)$. If $\hat{\K}$ is a fusion system containing $\K$, set Rep$_{\hat{\K}}(P,\K):=$ Hom$_{\hat{\K}}(P,S)/\sim$. 

Let $(\T,\F,\S)$ be a tree of fusion systems, and $P$ be a finite $p$-group. Define Rep$(P,\F(-))$ to be the functor from $\T$ to $\mathfrak{Set}$ which sends vertices $v$ and edges $e$ of $\T$ respectively to Rep$(P,\F(v))$ and Rep$(P,\F(e))$ and which sends $f_{ev} \in$ Hom$_\T(e,v)$ to the map from Rep$(P,\F(e))$ to Rep$(P,\F(v))$ given by $$[\alpha]_{\F(e)} \longmapsto [\alpha \circ \iota_{\S(e)}^{\S(v)}]_{\F(v)}.$$ Observe that this mapping is independent of the choice of $\alpha$. Using this definition we introduce the following space.
\begin{Def}
Let $P$ be a finite $p$-group and $(\T,\F,\S)$ be a tree of fusion systems. The \textit{$P$-orbit graph}, Rep$(P,\F)$ is the homotopy colimit $$ \underrightarrow{\mbox{hocolim}}_{
  \substack{
  \T}} \Rep(P,\F(-)).$$
\end{Def}
Since there are no $n$-simplices in $ \underrightarrow{\mbox{hocolim}}_{
  \substack{
  \T}} \Rep(P,\F(-))$ for $n \geq 2$, Rep$(P,\F)$ can be described as the geometric realisation of a graph as follows.

\begin{Lem}\label{repfgraph}
Let $P$ be an arbitrary $p$-group and $(\T,\F,\S)$ be a tree of fusion systems. Then $R:=$ Rep$(P,\F)$ may be regarded as a graph with
$$V(R) = \bigcup_{v \in V(\T)} \Rep(P,\F(v)) \mbox{ and } E(R) = \bigcup_{e \in E(\T)}  \Rep(P,\F(e)),$$ where $[\alpha]_{\F(v)}$ and $[\beta]_{\F(w)}$ are connected via an edge $[\gamma]_{\F(e)}$ if and only if $v$ and $w$ are both incident on $e$ in $\T$ and the identities $$[\gamma \circ \iota_{\S(e)}^{\S(v)}]_{\F(v)}=[\alpha]_{\F(v)} \mbox{ and } [\gamma \circ \iota_{\S(e)}^{\S(v)}]_{\F(w)}=[\beta]_{\F(w)}$$ hold. 
\end{Lem}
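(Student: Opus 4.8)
The plan is to unwind the definition of the homotopy colimit of the functor $\Rep(P,\F(-))$ over the category $\T$ and verify directly that its $0$-skeleton and $1$-skeleton are as described, and that there are no higher simplices. First I would recall the standard description of $\mbox{hocolim}_\T D$ for a functor $D$ on a small category $\T$: its nondegenerate $n$-simplices correspond to chains $c_0 \to c_1 \to \cdots \to c_n$ of non-identity composable morphisms in $\T$ together with an element of $D(c_0)$. Since $\T$ (as a category built from a graph) has only objects $V(\T) \sqcup E(\T)$ and morphisms $f_{ev}\colon e \to v$ (plus identities), the only non-identity morphisms are the $f_{ev}$, and \emph{no two of these are composable} (the target of $f_{ev}$ is a vertex, which is the source of no non-identity morphism). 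Hence there are no chains of length $\geq 2$, so no nondegenerate $n$-simplices for $n \geq 2$; this justifies the remark preceding the lemma and shows $R$ is (the geometric realisation of) a graph.

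Next I would identify the vertices and edges. The nondegenerate $0$-simplices are pairs $(c, x)$ with $c \in \mbox{Ob}(\T)$ and $x \in D(c) = \Rep(P,\F(c))$; a priori this gives $\bigcup_{v \in V(\T)} \Rep(P,\F(v)) \sqcup \bigcup_{e \in E(\T)} \Rep(P,\F(e))$. However, the $1$-simplices coming from the morphisms $f_{ev}\colon e \to v$ have one face equal to the $0$-simplex $([\gamma]_{\F(e)}, e)$ and the other equal to its image $([\gamma \circ \iota_{\S(e)}^{\S(v)}]_{\F(v)}, v)$; in the geometric realisation of the homotopy colimit, every $0$-simplex sitting over an edge object $e$ is the endpoint of such a $1$-simplex, and I would observe that these $1$-simplices are precisely the degenerate-free edges attached along $f_{ev}$. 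The upshot, after collapsing along the (two) structure maps out of each $e$, is exactly the graph $R$ in the statement: a vertex for each class in $\Rep(P,\F(v))$, an edge for each class $[\gamma]_{\F(e)}$ in $\Rep(P,\F(e))$, and $[\gamma]_{\F(e)}$ joins $[\alpha]_{\F(v)}$ to $[\beta]_{\F(w)}$ exactly when $v,w$ are the two vertices incident on $e$ and $[\gamma \circ \iota_{\S(e)}^{\S(v)}]_{\F(v)} = [\alpha]_{\F(v)}$, $[\gamma \circ \iota_{\S(e)}^{\S(w)}]_{\F(w)} = [\beta]_{\F(w)}$. I would also note (as the paper already did when defining $\Rep(P,\F(-))$) that the incidence condition is well-defined independently of the representative $\gamma$, since $\alpha \sim \beta$ in $\Hom(P,\S(e))$ forces $\alpha \circ \iota \sim \beta \circ \iota$ in $\Hom(P,\S(v))$, as the relevant $\F(e)$-isomorphism maps forward under the injective morphism $\F(e) \to \F(v)$.

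The main obstacle, and the only point requiring care, is the bookkeeping in the second paragraph: matching the simplicial-set model of the homotopy colimit with the combinatorial graph, and in particular checking that the geometric realisation really does contract each $1$-simplex's $e$-end onto the corresponding vertex over $v$ without creating or destroying connectivity — i.e. that the two descriptions of $R$ are not merely abstractly homotopy equivalent but give the same graph (up to homeomorphism of realisations) with the stated vertex and edge sets. Since $\T$ is a tree and the diagram is so degenerate, this amounts to a routine but slightly fussy verification that I would carry out by writing the homotopy colimit as the bar construction and reading off its two lowest levels explicitly; no genuine difficulty arises because there are no composable pairs of non-identity arrows and each edge object $e$ has exactly two maps out of it.
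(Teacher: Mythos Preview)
Your proposal is correct and is precisely a careful unwinding of the paper's one-line proof, which reads in full: ``This follows immediately from the definition of the homotopy colimit.'' You have simply supplied the details the paper omits---identifying the simplices of the bar construction, noting that no two non-identity arrows in $\T$ compose, and recognising that each $0$-simplex over an edge object together with its two attached $1$-simplices forms a path homeomorphic to a single edge---so the approaches are the same.
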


\begin{proof}
This follows immediately from the definition of  the homotopy colimit.
\end{proof}

If $(\T,\F,\S)$ is a tree of fusion systems which satisfies $(H)$, then by Lemma \ref{compft}, $(\T,\F,\S)$ has a completion which we denote by $\F_\T.$ Let Rep$_{\F_\T}(P,\F(-))$ be the subfunctor of Rep$(P,\F(-))$ which sends vertices $v$ and edges $e$ of $\T$ respectively to Rep$_{\F_\T}(P,\F(v))$ and Rep$_{\F_\T}(P,\F(e))$ and which sends $f_{ev}$ to the map from Rep$_{\F_\T}(P,\F(e))$ to Rep$_{\F_\T}(P,\F(v))$ given by $$[\alpha]_{\F(e)} \longmapsto [\alpha \circ \iota_{\S(e)}^{\S(v)}]_{\F(v)}.$$ (Note that this map is well-defined since $\F_\T$ is closed under composition with inclusion morphisms). Let $$\mbox{Rep}_{\F_\T}(P,\F):= \underrightarrow{\mbox{hocolim}}_{
  \substack{
  \T}} \Rep_{\F_\T}(P,\F(-)).$$
We observe that the obvious analogue of Lemma \ref{repfgraph} holds for Rep$_{\F_\T}(P,\F)$, and that (for this reason) Rep$_{\F_\T}(P,\F)$ may be embedded as a subgraph of Rep$(P,\F)$ in the obvious way. We end this section with an important result which provides us with a precise description of this embedding.

\begin{Prop}\label{phipft}
Let $(\T,\F,\S)$ be a tree of fusion systems which satisfies $(H)$ and let $\F_\T$ be its completion. Write $S:=\S(v_*)$ and fix $P \leq S$. The following hold:
\begin{itemize}
\item[(a)] The connected component of the vertex $[\iota_P^S]_{\F(v_*)}$ in $\Rep(P,\F)$ is isomorphic to Rep$_{\F_\T}(P,\F)$.
\item[(b)] The natural map $$\begin{CD}
\Phi_P: \pi_0(\Rep(P,\F)) @>>> \Rep(P,\F_\T)\\
\end{CD}$$ which sends the connected component of a vertex $[\alpha]_{\F(v)}$ in $\Rep(P,\F)$ to $[\alpha \circ \iota_{\S(v)}^S]_{\F_\T}$ is a bijection.
\end{itemize}
\end{Prop}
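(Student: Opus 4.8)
The plan is to establish \textbf{(b)} first and then read off \textbf{(a)}. The recurring technical tool is a \emph{transport observation}: given a homomorphism $\mu\colon P\to S$ with image $Q$ and vertices $v,w$ with $Q\leq\S(v)\cap\S(w)$, the classes of $\mu$ corestricted to $\S(v)$ and to $\S(w)$ lie in the same connected component of $\Rep(P,\F)$, and in the same component of $\Rep_{\F_\T}(P,\F)$ if moreover $\mu\in\Hom_{\F_\T}(P,S)$. One proves this by walking the unique $v$--$w$ path $x_0,\dots,x_m$ in $\T$: Hypothesis $(H)$, used exactly as in the proof of Lemma \ref{compft}, puts $Q$ inside every $\S(x_i)$, hence inside each edge group $\S(x_i)\cap\S(x_{i+1})$, so Lemma \ref{repfgraph} supplies the connecting edges.

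For \textbf{(b)}, well-definedness of $\Phi_P$ is routine: the assignment $[\alpha]_{\F(v)}\mapsto[\alpha\circ\iota_{\S(v)}^S]_{\F_\T}$ respects $\F(v)$-isomorphism because $\Iso_{\F(v)}\subseteq\Iso_{\F_\T}$, and it sends the two endpoints of any edge $[\gamma]_{\F(e)}$ to the common value $[\gamma\circ\iota_{\S(e)}^S]_{\F_\T}$. Surjectivity is immediate, since $[\delta]_{\F_\T}$ is the image of the component of $[\delta]_{\F(v_*)}$ (note $\iota_{\S(v_*)}^S=\Id_S$). Injectivity is the heart of the matter. Suppose $[\alpha\circ\iota_{\S(v)}^S]_{\F_\T}=[\beta\circ\iota_{\S(w)}^S]_{\F_\T}$, witnessed by $\gamma\in\Iso_{\F_\T}(P\alpha,P\beta)$. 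Apply Lemma \ref{compft} to $\gamma$ to obtain a factorisation $\gamma=\gamma_0\circ\cdots\circ\gamma_{k-1}$ through morphisms $\gamma_i\in\Mor(\F(u_i))$ along a path in $\T$, and induct on $k$. For $k=0$, $\alpha$ and $\beta$ are corestrictions of a common map $P\to S$ and transport applies. For the step, peel off the last factor: $\gamma=\gamma'\circ\gamma_{k-1}$ with $\gamma'$ of length $k-1$; set $\beta':=(\alpha\circ\iota_{\S(v)}^S)\circ\gamma'$, whose image lies in $\S(u_{k-1})$. The inductive hypothesis connects $[\alpha]_{\F(v)}$ to the class of $\beta'$ over $u_{k-1}$; the single $\F(u_{k-1})$-isomorphism $\gamma_{k-1}$ identifies that class with the class of $\beta$ over $u_{k-1}$; and transport moves the latter to $[\beta]_{\F(w)}$. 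Since every $\F_\T$-isomorphism admits a finite such factorisation, $\Phi_P$ is injective.

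Part \textbf{(a)} then follows, recalling that the vertices of $\Rep_{\F_\T}(P,\F)$ are the classes $[\alpha]_{\F(v)}$ with $\alpha\circ\iota_{\S(v)}^S\in\Hom_{\F_\T}(P,S)$. First, the component of $[\iota_P^S]_{\F(v_*)}$ lies inside $\Rep_{\F_\T}(P,\F)$: if $[\alpha]_{\F(v)}$ is in that component then $[\alpha\circ\iota_{\S(v)}^S]_{\F_\T}=[\iota_P^S]_{\F_\T}$, and a short diagram chase (using that a fusion-system morphism factors as an isomorphism followed by an inclusion) shows $\alpha$ is injective with $\alpha\circ\iota_{\S(v)}^S\in\Hom_{\F_\T}(P,S)$; the statement for edges is immediate since an edge $[\gamma]_{\F(e)}$ and its endpoint over $v$ yield the same homomorphism $\gamma\circ\iota_{\S(e)}^S$ upon composing out to $S$. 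Conversely $\Rep_{\F_\T}(P,\F)$ is connected: each of its vertices $[\alpha]_{\F(v)}$ satisfies $[\alpha\circ\iota_{\S(v)}^S]_{\F_\T}=[\iota_P^S]_{\F_\T}$ (same factorisation argument), so by the injectivity of $\Phi_P$ its component is the component of $[\iota_P^S]_{\F(v_*)}$. Hence that component is precisely the image of the embedding $\Rep_{\F_\T}(P,\F)\hookrightarrow\Rep(P,\F)$ noted before the statement, and since that embedding is injective we obtain the asserted isomorphism.

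The one genuinely delicate point is the injectivity induction in \textbf{(b)}: one must run the induction on the length of a Lemma \ref{compft}-factorisation of the connecting $\F_\T$-isomorphism and, at each stage, keep track of which vertex group contains the current image so that the single factor $\gamma_{k-1}$ and the transport move mesh. Everything else — well-definedness, surjectivity, and both inclusions in \textbf{(a)} — is bookkeeping, with the transport observation absorbing all appeals to Hypothesis $(H)$.
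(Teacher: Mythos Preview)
Your proof is correct. The essential content---tracing paths in $\Rep(P,\F)$ via the Lemma~\ref{compft} factorisation together with the transport along $\T$ guaranteed by $(H)$---is the same as in the paper. The organisation, however, is reversed: the paper proves (a) directly first (path-following in both directions) and then deduces the injectivity of $\Phi_P$ in (b) by a short trick, whereas you establish injectivity of $\Phi_P$ by induction on factorisation length and then read off (a) from (b).

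The paper's trick for (b) is worth noting. Given $[\alpha\circ\iota]_{\F_\T}=[\beta\circ\iota]_{\F_\T}=[\delta]_{\F_\T}$, it observes that $[\delta^{-1}\alpha]_{\F(v)}$ and $[\delta^{-1}\beta]_{\F(w)}$ are vertices of $\Rep_{\F_\T}(P\delta,\F)$, which is connected by (a) applied with $P\delta$ in place of $P$; precomposing the resulting path by $\delta$ yields a path from $[\alpha]_{\F(v)}$ to $[\beta]_{\F(w)}$ in $\Rep(P,\F)$. This sidesteps rerunning the factorisation induction for arbitrary (not necessarily $\F_\T$-) homomorphisms $\alpha,\beta$. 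Your direct induction is equally valid and more self-contained, at the cost of carrying the target vertex through the induction and invoking transport at both ends; the paper's route buys a shorter argument for (b) once (a) is available.
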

\begin{proof}
Since $\T$ has finitely many vertices and $\S(v)$ is finite for each $v \in V(\T)$, the graph Rep$(P,\F)$ contains finitely many vertices and edges. We may identify $\F(v)$ and $\F(e)$ with their images in $\F_\T$ by Lemma \ref{compft}. Set $v_0:=v_*$ and $\alpha_0:= \iota_P^{\S(v_0)}$ and let $$[\alpha_0]_{\F(v_0)}, [\alpha_1]_{\F(v_1)}, \ldots, [\alpha_n]_{\F(v_n)}$$ be a path in Rep$(P,\F)$ and assume that $[\beta_i]_{\F(e_i)}$ is an edge from $[\alpha_{i-1}]_{\F(v_0)}$ to $[\alpha_i]_{\F(v_1)}$ for each $1 \leq i \leq n.$
 We need to show that each vertex $[\alpha_i]_{\F(v_i)}$ lies in Rep$_{\F_\T}(P,\F)$. Clearly $[\alpha_0]_{\F(v_0)} \in$ Rep$_{\F_\T}(P,\F)$. Assume that $n \geq 1$, and that $[\alpha_i]_{\F(v_i)} \in$ Rep$_{\F_\T}(P,\F)$ for some $i < n$. If $[\beta_{i+1}]_{\F(e_{i+1})}$ is an edge from 
 $[\alpha_i]_{\F(v_i)}$ to $[\alpha_{i+1}]_{\F(v_{i+1})}$ then there exist maps $$\gamma \in \mbox{Hom}_{\F(v_i)}(P\alpha_i,P\beta_{i+1}) \mbox{ and } \delta \in \mbox{Hom}_{\F(v_{i+1})}(P\beta_{i+1},P\alpha_{i+1})$$ such that $\alpha_i \circ \gamma = \beta_{i+1}$ and $\beta_{i+1} \circ \delta=\alpha_{i+1}$. Hence $\gamma \circ \delta \in$ Hom$_{\F_\T}(P\alpha_i,P\alpha_{i+1}) $ and $[\alpha_{i+1}]_{\F(v_{i+1})} \in$ Rep$_{\F_\T}(P,\F)$ and by induction, $[\alpha_i]_{\F(v_i)} \in$ Rep$_{\F_\T}(P,\F)$ for all $0 \leq i \leq n.$

Conversely suppose that $\alpha \in$ Hom$_{\F_\T}(P,\S(v))$ and that $[\alpha]_{\F(v)}$ is a vertex in Rep$_{\F_\T}(P,\F)$. By the definition of $\F_\T$, there exists a path $v_0,v_1, \ldots, v_n=v$ in $\T$ and for $1 \leq i \leq n$, maps $\alpha_i \in$ Hom$_{\F(v_i)}(P\alpha_{i-1}, P\alpha_i)$ with $\alpha_0=\iota_P^S$ such that $\alpha=\alpha_0 \circ \cdots \circ \alpha_n.$ This implies that there exists a path in Rep$_{\F_\T}(P,\F)$, $$[\alpha_0]_{\F(v_0)}, [\alpha_0 \circ \alpha_1]_{\F(v_1)} \ldots, [\alpha_0 \circ \alpha_1 \circ \cdots \circ \alpha_{n-1}]_{\F(v_{n-1})},[\alpha]_{\F(v_n)},$$ and completes the proof of (a).

Next, we prove (b). It suffices to show that two vertices $[\alpha]_{\F(v)}$ and $[\beta]_{\F(w)}$ are connected in Rep$(P,\F)$ if and only if $[\alpha \circ \iota_{\S(v)}^S]_{\F_\T}$=$[\beta \circ \iota_{\S(w)}^S]_{\F_\T}$ in Rep$(P,\F_\T).$ It is enough to prove this when $[\alpha]_{\F(v)}$ and $[\beta]_{\F(w)}$ are connected via a single edge $[\gamma]_{\F(e)}$. 

We prove the `only if' direction first. Thus we suppose that $[\alpha]_{\F(v)}$ and $[\beta]_{\F(w)}$ are connected via an edge $[\gamma]_{\F(e)}$ in Rep$(P,\F)$ so that there exist morphisms $\gamma_1 \in$ Hom$_{\F(v)}(P\alpha,P\gamma)$ and $\gamma_2 \in$ Hom$_{\F(w)}(P\beta,P\gamma)$ with $$P\alpha\gamma_1=P\gamma=P\beta\gamma_2.$$ Then $\gamma_1\gamma_2^{-1} \in$ Hom$_{\F_\T}(P\alpha,P\beta)$ and $[\beta \circ \iota_{\S(w)}^S]=[\alpha\gamma_1\gamma_2^{-1} \circ \iota_{\S(w)}^S]=[\alpha \circ \iota_{\S(v)}^S] \in$ Rep$(P,\F_\T)$. Conversely, suppose that $[\alpha \circ \iota_{\S(v)}^S]_{\F_\T}=[\beta \circ \iota_{\S(w)}^S]_{\F_\T}=[\delta]_{\F_\T}$, for some $\delta \in$ Hom$_{\F_\T}(P,S)$. Then $[\delta^{-1}\alpha]_{\F(v)}$, $[\delta^{-1}\beta]_{\F(v)}$ are vertices in Rep$_{\F_\T}(P\delta,\F)$ and by part (a), this graph is connected so that there must exist a path joining  $[\delta^{-1}\alpha]_{\F(v)}$ to $[\delta^{-1}\beta]_{\F(v)}$ in Rep$_{\F_\T}(P\delta,\F)$. This is easily seen to be equivalent to the existence of a path joining $[\alpha]_{\F(v)}$ to $[\beta]_{\F(w)}$ in Rep$(P,\F)$, completing the proof of the lemma.
\end{proof}

\section{Trees of Group Fusion Systems}\label{orbfus}
In this section we will give a precise description of the relationship between trees of groups $(\T,\G)$ and trees of fusion systems $(\T,\F,\S)$ by considering what happens when the latter is induced by the former. It will turn out that both the completion and $P$-orbit graph of $(\T,\F,\S)$ have group-theoretic descriptions in this case, which will allow us to give an entirely group theoretic interpretation of Theorem B, in Section \ref{compfus}.

\subsection{The Completion}
We start by giving a precise explanation of the word `induced' above. The following lemma is a trivial consequence of Sylow's Theorem.

\begin{Lem}\label{induce}
Let $(\T,\G)$ be a tree of finite groups. For any choice of Sylow $p$-subgroups $\S(v) \in \Syl_p(\G(v))$ and $\S(e) \in \Syl_p(\G(e))$, there exists a tree of finite $p$-groups $(\T,\S)$  and a tree of fusion systems $(\T,\F_{\S(-)}(\G(-)),\S(-))$ \mbox{induced by} $(\T,\G)$. 
\end{Lem}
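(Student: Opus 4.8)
The plan is to construct the tree of finite $p$-groups $(\T,\S)$ by hand and then observe that $\F_{\S(-)}(\G(-))$ automatically inherits the structure of a tree of fusion systems. The only genuine content is a single application of Sylow's theorem together with an elementary compatibility computation; in particular, since the category $\T$ has no pair of composable non-identity morphisms, no functoriality identities have to be checked, and a functor on $\T$ is determined freely by its values on objects and on the morphisms $f_{ev}$.

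First I would fix an edge $e\in E(\T)$ together with an incident vertex $v$, and write $\iota_{ev}:=\G(f_{ev})\colon\G(e)\hookrightarrow\G(v)$ for the given monomorphism. As $\S(e)\iota_{ev}$ is a $p$-subgroup of $\G(v)$, Sylow's theorem supplies an element $g_{ev}\in\G(v)$ with $(\S(e)\iota_{ev})^{g_{ev}}\leq\S(v)$, and I would then set $\S(f_{ev}):=\iota_{ev}\circ c_{g_{ev}}$, a monomorphism between the finite $p$-groups $\S(e)$ and $\S(v)$. As observed above, this assignment extends uniquely to a functor $\S\colon\T\to\mathfrak{Grp}$ with values in finite $p$-groups, so $(\T,\S)$ is a tree of finite $p$-groups. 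Setting $\F(v):=\F_{\S(v)}(\G(v))$ and $\F(e):=\F_{\S(e)}(\G(e))$ then gives fusion systems on $\S(v)$ and $\S(e)$ respectively, so condition (a) in the definition of a tree of $p$-fusion systems holds.

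Next, writing $\varphi:=\S(f_{ev})$, I would check that $\varphi$ is a morphism of fusion systems $\F(e)\to\F(v)$. Any $\alpha\in\Hom_{\F(e)}(P,R)$ has the form $c_h|_P$ for some $h\in\G(e)$ with $P^h\leq R$; putting $k:=(h\iota_{ev})^{g_{ev}}\in\G(v)$ and using that $\iota_{ev}$ is a homomorphism, one computes for each $x\in P$ that $x(\alpha\circ\varphi)=\bigl((x^h)\iota_{ev}\bigr)^{g_{ev}}=(x\iota_{ev})^{(h\iota_{ev})g_{ev}}=(x\varphi)^{k}$, so $(P\varphi)c_k=(P^h)\varphi\leq R\varphi$ and $\beta:=c_k|_{P\varphi}\in\Hom_{\F(v)}(P\varphi,R\varphi)$ satisfies $\alpha\circ\varphi|_R=\varphi|_P\circ\beta$. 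I expect this bookkeeping with the right-action and composition conventions to be the only slightly delicate point, hence the main (modest) obstacle.

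Finally I would verify that $\varphi$ is an \emph{injective} morphism. For $P\leq\S(e)$ the induced map $\Hom_{\F(e)}(P,\S(e))\to\Hom_{\F(v)}(P\varphi,\S(v))$ sends $\alpha$ to the unique $\beta$ with $\varphi|_P\circ\beta=\alpha\circ\varphi$, uniqueness holding because $\varphi|_P$ is injective and existence by the previous paragraph applied with $R=\S(e)$; and if $\alpha,\alpha'\in\Hom_{\F(e)}(P,\S(e))$ have the same image then $\alpha\circ\varphi=\varphi|_P\circ\beta=\alpha'\circ\varphi$, so $\alpha=\alpha'$ since $\varphi$ is injective as a homomorphism. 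Hence condition (b) holds, $\F:=\F_{\S(-)}(\G(-))$ is a functor $\T\to\mathfrak{Fus}$, and $(\T,\F,\S)$ is the required tree of fusion systems induced by $(\T,\G)$.
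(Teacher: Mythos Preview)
Your proposal is correct and follows essentially the same construction as the paper: choose $g_{ev}\in\G(v)$ via Sylow's theorem so that $(\S(e)\G(f_{ev}))^{g_{ev}}\leq\S(v)$, and set $\S(f_{ev}):=\G(f_{ev})\circ c_{g_{ev}}$. The only difference is that where the paper simply asserts ``Clearly $\Phi_{ev}$ is a morphism of fusion systems'' and leaves injectivity implicit, you carry out both verifications explicitly; your added detail is accurate and the approaches are otherwise identical.
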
 

\begin{proof}
Fix a choice of Sylow $p$-subgroups, $\S(v) \in$ Syl$_p(\G(v))$ and $\S(e) \in$ Syl$_p(\G(e))$ for each vertex $v$ and edge $e$ of $\T$. If $v$ is incident to $e$ then by Sylow's Theorem there exists an element $g_{ev} \in \G(v)$ such that $(\S(e)\G(f_{ev}))^{g_{ev}} \leq \S(v).$ Let $\S$ be the functor from $\T$ to $\mathfrak{Grp}$ which sends $e$ and $v$ respectively to $\S(e)$ and $\S(v)$ and $f_{ev} \in$ Hom$_\T(e,v)$ to $\G(f_{ev}) \circ c_{g_{ev}} \in$ Hom$(\S(e),\S(v))$. Then $f_{ev}$ determines a tree of finite $p$-groups $(\T,\S)$. Now let $\F_{\S(-)}(\G(-))$ be the functor from $\T$ to $\mathfrak{Fus}$ which sends $e$ and $v$ respectively to $\F_{\S(e)}(\G(e))$ and $\F_{\S(v)}(\G(v))$ and $f_{ev}$ to the homomorphism $\Phi_{ev}:=\G(f_{ev}) \circ c_{g_{ev}}|_{\S(e)} \in$ Hom$(\S(e),\S(v)).$ Clearly $\Phi_{ev}$ is a morphism of fusion systems and hence determines a tree of fusion systems $(\T,\F_{\S(-)}(\G(-)),\S(-))$, as required.
\end{proof}
The proof of Lemma \ref{induce} shows that there may be many trees of fusion systems $(\T,\F_{\S(-)}(\G(-)),\S(-))$ induced by a tree of groups $(\T,\G)$, since a choice for the functor $\S(-)$ is made when Sylow's Theorem is applied. We need to show that this choice does not interfere with the isomorphism type of the completion $\F_\T$. To do this, we first isolate precisely how conjugation takes place in $\G_\T$ by proving the following lemma of Robinson (\cite[Lemma 1]{Rob}).

\begin{Lem}\label{roblem}
Let $\T$ be a tree consisting of two vertices $v$ and $w$ with a single edge $e$ connecting them.  Let $(\T,\G)$ be a tree of groups with completion $\G_\T$ and write $A:=\G(v)$, $B:=\G(w)$ and $C:=\G(e).$ The following hold:
\begin{itemize}
\item[(a)] Any product of elements whose successive terms lie alternately in the sets $A \backslash C$ and $B \backslash C$, lies outside of $C$, and outside at least one of $A$ and $B$.
\item[(b)] Each element $g \in G \backslash A$ may be written as a product $g=a_0\omega b_{\infty}$ with $a_0 \in A$ and $b_{\infty} \in B$ so that either $\omega=1$ or $$\omega=\prod_{i=1}^s b_ia_i,$$
where $a_i \in A \backslash C$ and $b_i \in B \backslash C$ for $1 \leq i \leq s$. 
\end{itemize}
Consequently, if $X \leq A$, $g \in G \backslash A$ and $X^g \leq A$ or $X^g \leq B$, then writing $g$ as a product $g=a_0b_1a_1 \ldots b_sa_sb_{\infty}$ as in (b), we have $$\langle X_0,X_i,Y_i \mid 1 \leq i \leq s \rangle \leq C$$ where $X_0=X^{a_0},Y_1=X_0^{b_1},X_1=Y_1^{a_1},$ and so on.
\end{Lem}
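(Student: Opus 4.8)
# Proof Proposal for Robinson's Lemma (Lemma \ref{roblem})

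The plan is to prove (a) and (b) first, essentially by the normal form theory for amalgamated free products (which is what $\G_\T$ is here, namely $A *_C B$), and then deduce the ``Consequently'' statement by a careful bookkeeping argument tracking the conjugation of $X$ step by step through the syllables of $g$.

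For part (a), I would argue by induction on the number of syllables in the product. The key fact is the classical result (Serre, \cite[I.1.2]{Serre} or the normal form theorem for amalgams): a product $x_1 x_2 \cdots x_n$ with $n \geq 1$, where consecutive $x_i$ lie alternately in $A \setminus C$ and $B \setminus C$, is never trivial in $A *_C B$; more precisely its normal form has length $n$. From this I want to extract the slightly stronger statement that such a product lies outside $C$ and outside at least one of $A$, $B$. If $n \geq 2$ this is immediate from the length being $\geq 2$ (so it lies in neither $A$ nor $B$, hence not in $C$). If $n = 1$ the element lies in exactly one of $A \setminus C$, $B \setminus C$ by hypothesis, so it lies outside $C$ and outside the other factor. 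For part (b), given $g \in G \setminus A$, I write $g$ in reduced normal form and then absorb: any initial $A$-syllable can be pulled out as $a_0$ and any terminal $B$-syllable pushed out as $b_\infty$ (if the word ends in an $A$-syllable, I can still write it as $\cdots a_s b_\infty$ with $b_\infty = 1$), leaving a middle portion $\omega$ which is either trivial or of the stated alternating form $\prod_{i=1}^s b_i a_i$ with all syllables outside $C$. The condition $g \notin A$ guarantees $\omega \neq 1$ or $b_\infty \notin C$, so the decomposition is non-trivial in the required sense; I should be slightly careful about the exact shape of $\omega$ (whether it starts with a $b$ or an $a$) but by adjusting which syllables get absorbed into $a_0$ this can always be arranged, possibly at the cost of $a_0$ or $b_\infty$ lying in $C$ rather than being a genuine syllable.

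For the ``Consequently'' statement — which I expect to be the main obstacle, since it is the part actually used later — I proceed as follows. Suppose $X \leq A$, $g \in G \setminus A$, and $X^g \leq A$ or $X^g \leq B$. Write $g = a_0 b_1 a_1 \cdots b_s a_s b_\infty$ as in (b), and define the chain of conjugates $X_0 = X^{a_0}$, $Y_1 = X_0^{b_1}$, $X_1 = Y_1^{a_1}$, $\dots$, $Y_s = X_{s-1}^{b_s}$, $X_s = Y_s^{a_s}$, so that $X^g = X_s^{b_\infty}$. Fix any $x \in X$ and let $x_0 = x^{a_0} \in X_0 \leq A$; write $x_i, y_i$ for the corresponding conjugates. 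The claim is that each $x_i$ and each $y_i$ lies in $C$. Suppose not: let $z$ be the first element in the chain $x_0, y_1, x_1, y_2, \dots$ that lies outside $C$; say $z$ lies in $A \setminus C$ (the other case is symmetric). Then $z = w^{t}$ where $w$ is the previous element of the chain (which lies in $C$, or $w = x$ with $t = a_0$ — one handles the very first step separately using $X \leq A \leq$... actually since $w \in C$ and the conjugating element is a syllable outside $C$, $z$ genuinely lies outside $C$). Now form the element $z^{b_{i+1} a_{i+1} \cdots b_s a_s b_\infty}$: its conjugating word $b_{i+1} a_{i+1} \cdots b_\infty$ consists of alternating syllables outside $C$ starting with $b_{i+1} \in B \setminus C$ (or if $z$ came from applying $a_i$, starting appropriately), prepended with $z \in A \setminus C$ — so the whole product $z \cdot b_{i+1} \cdot a_{i+1} \cdots$ is an alternating product of the type in (a). By part (a) this product lies outside at least one of $A$ and $B$. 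But this product equals $(x^{a_0 b_1 \cdots a_i})^{b_{i+1} \cdots b_\infty}$-type expression; more carefully, tracing through, it equals the conjugate of $z$ by the tail of $g$, which lands inside $X^g \leq A$ (or $\leq B$) — contradiction. Hence every $x_i, y_i \in C$, and since $x$ was arbitrary and $C$ is a subgroup, $\langle X_0, X_i, Y_i \mid 1 \leq i \leq s \rangle \leq C$.

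The delicate points I will need to get exactly right: (i) the very first step of the chain, $x \mapsto x^{a_0}$, where $a_0 \in A$ need not be outside $C$, so I should note $X_0 \leq A$ automatically and only start worrying about escaping $C$ at the $b_1$-conjugation; (ii) the parity/starting-letter bookkeeping so that the ``bad'' element $z$ together with the remaining syllables genuinely forms an alternating word of the form covered by (a) — this requires checking that $z \in A \setminus C$ is followed in $g$ by a $B$-syllable and $z \in B \setminus C$ by an $A$-syllable, which holds because the $x_i$ sit ``after an $a_i$'' (hence before a $b_{i+1}$) and the $y_i$ sit ``after a $b_i$'' (hence before an $a_i$); (iii) making sure the conjugate of $z$ by the tail of $g$ really does land in $A \cup B$, using $X^g \leq A$ or $X^g \leq B$ and the fact that $X^g = X_s^{b_\infty}$ so earlier conjugates $z$ of elements of $X$ satisfy $z^{(\text{tail})} \in X^g$. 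I expect the argument to go through cleanly once this bookkeeping is set up, with part (a) doing all the real work.
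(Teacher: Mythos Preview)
Your approach is the paper's: (a) and (b) via the normal form theorem for $A *_C B$, then for the final claim take the first conjugate in the chain that escapes $C$ and contradict (a). Two points need correction.

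First, the alternating word to which you apply (a) is not $z \cdot b_{i+1} a_{i+1} \cdots b_\infty$; that product is \emph{not} equal to $z^{b_{i+1} \cdots b_\infty}$, contrary to what you write. You must instead write out the full conjugate as a word,
\[
b_\infty^{-1} a_s^{-1} \cdots a_{i+1}^{-1} b_{i+1}^{-1} \, z \, b_{i+1} a_{i+1} \cdots a_s b_\infty,
\]
which is genuinely alternating (once $z \notin C$ has the correct parity, as in your point (ii)) and genuinely equals an element of $X^g$. This is exactly what the paper does, beginning with $z = u \in X_0 \setminus C$ and then proceeding inductively down the chain.

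Second, part (a) as stated only guarantees the word lies outside \emph{one} of $A$ and $B$, so ``word $\in X^g \subseteq A \cup B$'' is not by itself a contradiction; your point (iii) does not address this. The paper handles it by a preliminary reduction: if $b_\infty \in C$, absorb it (conjugation of $X^g$ by an element of $C$ preserves containment in whichever of $A$, $B$ it lies), so one may assume $b_\infty = 1$ or $b_\infty \in B \setminus C$. The displayed conjugate word is then honestly alternating, with both end-syllables in $A \setminus C$ (former case) or both in $B \setminus C$ (latter case), and the finer analysis inside the proof of (a) shows it lies outside $B$, respectively outside $A$ --- which the paper matches against the hypothesis $X^g \leq B$, respectively $X^g \leq A$, to obtain the contradiction.
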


\begin{proof}
To see (a), let $w=g_0\ldots g_n$ be a product of elements whose successive terms lie alternately in the sets $A \backslash C$ and $B \backslash C$. Then $w$ is a reduced word in $\G_\T$ so if $w$ lies in $C$, it is no longer reduced, (being representable by an element of $C$). Observe that $g_0$ and $g_n$ dictate where $w$ lies. To see this, note that if $g_0,g_n \in A$ then $w \notin B \backslash C$, since otherwise $w=b \in B \backslash C$ implies that $$1=wb^{-1}=g_0,\ldots g_nb^{-1} \notin C,$$ a contradiction. Similarly if $g_0,g_n \in B$ then $w \notin A \backslash C$ and in the remaining cases (where $g_0$ and $g_n$ lie in different sets), $w \notin (A \cup B) \backslash C$. This proves (a).
To see (b), note that certainly any element $g \in G \backslash A$ may be written in the stated way, since (by (a)) such a representation allows for all possibilities for the set in which the element $g$ lies.
Finally, we prove the last assertion of the lemma. If $b_\infty \in C$ and equals $c$ say, then $X^{gc^{-1}}=(X^g)^{c^{-1}} \in A \cup B$ if and only if $X^g \in A \cup B.$ This proves that we may assume without loss of generality that either $b_\infty = 1$ (if $X^g \leq B$) or $b_\infty \in B \backslash C$ (if $X^g \leq A$.) In either case, suppose that there exists $u \in X^{a_0} \backslash C$. Then $$b_\infty^{-1}a_s^{-1} \ldots, b_1^{-1}ub_1\ldots a_sb_{\infty}$$ lies in $A$ or $B$ by assumption which contradicts (a). We obtain a similar contradiction if $X_0^{b_1}$ is not contained in $C$. Inductively, we arrive at the stated result.
\end{proof}
We can now apply Lemma \ref{roblem} to prove Theorem A, relating the two ways in which one can construct a fusion system from a tree of groups.

\begin{Thm}\label{treesgroupthm}
Let $(\T,\G)$ be a tree of finite groups and write $\G_\T$ for the completion of $(\T,\G)$. Let $(\T,\F,\S)$ be a tree of fusion systems induced by $(\T,\G)$ which satisfies $(H)$ so that there exists a completion $\F_\T$ for $(\T,\F,\S)$. The following hold:
\begin{itemize}
\item[(a)] $\S(v_*)$ is a Sylow $p$-subgroup of $\G_\T$.
\item[(b)] $\F_{\S(v_*)}(\G_\T)= \F_\T$.
\end{itemize}
In particular, $\F_\T$ is independent of the choice of tree of fusion systems $(\T,\F,\S)$ induced by $(\T,\G)$.
\end{Thm}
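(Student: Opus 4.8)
The plan is to prove (a) and (b) simultaneously by induction on the number of edges of $\T$, reducing the general case to the two-vertex, one-edge case handled by Lemma~\ref{roblem}. First I would dispose of the base case $|E(\T)|=0$, where $\T$ is a single vertex $v_*$, $\G_\T=\G(v_*)$, $\S(v_*)\in\Syl_p(\G(v_*))$ by hypothesis, and $\F_\T=\F(v_*)=\F_{\S(v_*)}(\G(v_*))$ trivially. For the inductive step, choose a leaf $v$ of $\T$ at maximal distance from $v_*$ and let $e=(v,w)$ be its unique incident edge; by $(H)$, $\S(e)\cong\S(v)$. Let $\T'$ be the subtree obtained by deleting $v$ and $e$. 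By the standard associativity of amalgams (Bass--Serre theory), $\G_\T$ is the amalgamated free product $\G_{\T'} *_{\G(e)} \G(v)$, and the induced tree of fusion systems on $\T'$ still satisfies $(H)$ with the same distinguished vertex $v_*$, so by induction $\S(v_*)\in\Syl_p(\G_{\T'})$ and $\F_{\S(v_*)}(\G_{\T'})=\F_{\T'}$.

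Next I would handle the two-vertex case directly, as this is the heart of the argument: take $\T$ with vertices $v,w$, edge $e$, $A=\G(v)$, $B=\G(w)$, $C=\G(e)$, and (after the $(H)$ identification) $\S(w)=\S(e)=C_p\in\Syl_p(C)\cap\Syl_p(B)$, while $\S(v)=S\in\Syl_p(A)$ contains $C_p$. For (a): a finite $p$-subgroup of $G=A*_C B$ is contained in a conjugate of $A$ or of $B$ (Bass--Serre: finite subgroups of an amalgam are subconjugate to a factor), hence is subconjugate in $G$ to a subgroup of $S$ since $\S(w)\leq S$; thus $S\in\Syl_p(G)$. For (b): the inclusion $\F_\T\subseteq\F_S(G)$ is clear since each $\F(v)=\F_{\S(v)}(\G(v))$ sits inside $\F_S(G)$. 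For the reverse inclusion, take $P\leq S$ and $g\in G$ with $P^g\leq S$; I must write $c_g|_P$ as a composite of restrictions of morphisms from $\F(v)$ and $\F(w)$. Writing $P^g\leq A$, Lemma~\ref{roblem}(b) gives $g=a_0b_1a_1\cdots b_sa_sb_\infty$ and, crucially, its consequence yields that all the intermediate images $X_0,X_i,Y_i$ lie in $C\leq S$; so the conjugation isomorphism $c_g$ factors as an alternating composite of conjugations by $a_i\in A$ and $b_i\in B$, each of which is (a restriction of) a morphism in $\F_S(A)=\F(v)$ or $\F_{C_p}(B)=\F(w)$ — after conjugating $b_i$ appropriately into $C_p$, which is permissible because $C_p\in\Syl_p(B)$ so $\F_{C_p}(B)$ already contains all $B$-conjugations between subgroups of $C_p$. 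By Alperin-type reasoning (or directly, since $\F_S(G)$ is generated by conjugations between subgroups of $S$), this shows $\F_S(G)\subseteq\F_\T$.

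To finish the inductive step I would apply the two-vertex case to the amalgam $\G_\T=\G_{\T'}*_{\G(e)}\G(v)$: here one factor is $\G_{\T'}$ (with Sylow $p$-subgroup $\S(v_*)\supseteq\S(e)$ by induction and by $(H)$, which guarantees $\S(e)\cong\S(w)\leq\S(v_*)$ along the path), the other is $\G(v)$, amalgamated over $\G(e)$ with $\S(e)\cong\S(v)\in\Syl_p(\G(v))$. The two-vertex argument then gives $\S(v_*)\in\Syl_p(\G_\T)$ and $\F_{\S(v_*)}(\G_\T)=\langle\F_{\S(v_*)}(\G_{\T'}),\F_{\S(v)}(\G(v))\rangle=\langle\F_{\T'},\F(v)\rangle=\F_\T$, the last equality by Lemma~\ref{compft}. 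Finally, the independence statement is immediate: Lemma~\ref{compft} already records that $\F_\T$ does not depend on where $\F$ sends the edges, and (b) identifies $\F_\T$ with $\F_{\S(v_*)}(\G_\T)$, an object manifestly determined by $(\T,\G)$ alone (two choices of Sylow data give $\S(v_*)$'s that are conjugate in $\G_\T$, hence isomorphic fusion systems). The main obstacle I anticipate is the bookkeeping in the two-vertex case: one must use Lemma~\ref{roblem}'s consequence carefully to see that every intermediate subgroup in the factorisation of $c_g$ lands inside $C\leq S$ (not merely inside $A\cup B$), and one must repeatedly conjugate the $B$-factors into the fixed Sylow subgroup $C_p$ without leaving the subgraph of morphisms already available — this is exactly where the hypothesis $\S(e)\in\Syl_p(\G(e))$, $\S(w)\in\Syl_p(\G(w))$ is used.
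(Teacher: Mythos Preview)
Your proposal is correct and, for part~(b), follows essentially the same route as the paper: both induct on the size of $\T$, strip off a leaf $v$, recognise $\G_\T$ as the amalgam $\G_{\T'} *_{\G(e)} \G(v)$, and then use Lemma~\ref{roblem} to see that all intermediate conjugates in a factorisation of $g$ land in $\G(e)$, after which Sylow's theorem in the finite group $\G(e)$ pushes them into $\S(e)$ so that each step is a morphism in one of the vertex fusion systems.

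For part~(a), however, you and the paper diverge. The paper does \emph{not} prove (a) by induction; instead it argues directly using the action of $\G_\T$ on the Bass--Serre tree $\tilde{\T}$. Given a finite $p$-subgroup $P \leq \G_\T$, the fixed set $\tilde{\T}^P$ is a nonempty subtree, and its image in $\T \simeq \tilde{\T}/\G_\T$ is connected; one then chooses a vertex of that image closest to $v_*$ and, using $p \nmid |\G(v):\G(e)|$ (which is exactly hypothesis $(H)$), shows that one can always step closer, forcing the closest vertex to be $v_*$ itself. Your approach instead folds (a) into the same induction as (b), invoking the standard Bass--Serre fact that a finite subgroup of an amalgam is conjugate into a factor. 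Both are valid; the paper's argument is self-contained and exposes the global tree action explicitly, while yours is more economical in that it reuses the inductive scaffolding already in place for (b). One small caution: when you invoke your ``two-vertex case'' in the inductive step, the factor $\G_{\T'}$ is generally infinite, so you cannot appeal to ordinary Sylow there---you must use the inductive hypothesis that $\S(v_*)$ is Sylow in $\G_{\T'}$ in the paper's sense (every finite $p$-subgroup is conjugate into it). You gesture at this, but it should be made explicit.
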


\begin{proof}
We first prove that $S:=S(v_*)$ is a Sylow $p$-subgroup of $G_\T$. Let $P$ be a finite $p$-subgroup of $\G_\T$ and consider the image $\I$ of $\tilde{\T}^P$ in $\T$ under the composite
$$\begin{CD}
\tilde{\T} @>>> \tilde{\T}/{\G_\T} @>\simeq>> \T.\\
\end{CD}$$
Since $\tilde{\T}$ is a tree, $\tilde{\T}^P$ is also a tree, so that $\I$ must be connected. Let $v$ be a vertex in $\I$ which is of minimal distance from $v_*$ and assume that $v \neq v_*$. This implies that there is some $g \in \G_\T$ such that $g\G(v) \in (\G_\T/\G(v))^P$ or equivalently such that $g^{-1}Pg \leq \G(v)$. Let $e$ be the edge $(v,w)$ incident to $v$ in the unique minimal path from $v$ to $v_*$. Since $(\T,\F,\S)$ satisfies $(H)$, $p \nmid |\G(v):\G(e)|$, so by Sylow's Theorem there is $g' \in \G_\T$ such that $g'^{-1}Pg' \leq \G(e)$. Since $\G(e) \leq \G(w)$, we also have $g'\G(w) \in (\G_\T/\G(w))^P$, so that $w$ is a vertex closer to $v_*$ than $v$ in $\I$, a contradiction. Hence $v=v_*$, $g^{-1}Pg \leq \G(v_*)$ and by Sylow's Theorem there is some $g'' \in \G_\T$ with $g''^{-1}Pg'' \leq S$, as needed.

Next we prove that (b) holds. Observe that $\F_\T \subseteq \F_S(\G_\T)$ since (by definition) each morphism in $\F_\T$ is a composite of restrictions of morphisms in $\F(v)$, each of which clearly lies in $\F_S(\G_\T)$. Hence it remains to prove that $\F_S(\G_\T) \subseteq \F_\T$. We proceed by induction on the number of vertices $n:=|V(\T)|$ of $\T$, the result being clear in the case where $n=1$. Suppose that $n > 1$ and fix an extremal vertex, $v$ of $\T$ not equal to $v_*$. Let $\T'$ be the tree obtained from $\T$ by removing $v$ and the unique edge $e$ to which $v$ is incident. Then $(\T',\G)$ is a graph of groups and $(\T',\F,\S)$ is a tree of fusion systems induced by $(\T',\G)$ which satisfies $(H)$. Furthermore, $S \in$ Syl$_p(\G_{\T'})$ and by induction, the completion $\F_{\T'}$ of $(\T',\F,\S)$ is the fusion system $\F_S(\G_{\T'})$. Since $$\G_{\T}=\G_{\T'} *_{\G(e)} \G(v) \mbox{ and } \F_\T=\langle \F_{\T'}, \F_{\S(v)}(\G(v)) \rangle,$$ it will be enough to show that $$\F_S(\G_{\T'} *_{\G(e)} \G(v)) \subseteq \langle \F_{\T'}, \F_{\S(v)}(\G(v)) \rangle. $$ 

Let $X \leq S$ and suppose that $\langle X,X^g \rangle \leq S$ with $g \in \G_\T.$ If $g \in \G_{\T'}$ then we are done, so suppose that $g \notin \G_{\T'}$ and write $$g=g_{-\infty}h_1g_1 \ldots h_rg_rh_{r+1}g_{\infty}$$ where $g_{-\infty},g_{\infty} \in \G_{\T'}$, $g_i \in \G_{\T'} \backslash \G(e)$ and $h_i \in \G(v) \backslash \G(e).$ Set $X^*:=X^{g_{-\infty}}.$ Then $\langle X^*,(X^*)^{h_1g_1 \ldots h_rg_rh_{r+1}} \rangle \leq \G_{\T'}$ so that by Lemma $\ref{roblem}$, $$X^*, (X^*)^{h_1}, (X^*)^{h_1g_1},\ldots,(X^*)^{h_1g_1\ldots h_rg_rh_{r+1}} \leq \G(e).$$ 

By Sylow's Theorem there exists $k_0 \in \G(e)$ such that $(X^*)^{k_0} \leq \S(e)$, $l_1 \in \G(e)$ such that $(X^*)^{h_1l_1} \leq \S(e)$, $k_1 \in \G(e)$ such that $(X^*)^{h_1g_1k_1} \leq \S(e)$, and so on. Furthermore the sequence of elements $$k_0, k_0^{-1}h_1l_1, l_1^{-1}g_1k_1,k_1^{-1}h_2l_2, \ldots, k_r^{-1}h_{r+1}l_{r+1},l_{r+1}^{-1}g_\infty$$ lie alternately in the groups $\G_{\T'}$ and $\G(v)$ and multiply to give $h_1g_1\ldots h_rg_rh_{r+1}g_{\infty}$. Now, for $1 \leq i \leq r,$ conjugation from $(X^*)^{h_1...h_il_i}$ to $(X^*)^{h_1...h_ig_ik_i}$ is carried out in the fusion system of $\G_{\T'}$ and conjugation from $(X^*)^{h_1...h_ig_ik_i}$ to $(X^*)^{h_1...g_ih_{i+1}l_{i+1}}$ occurs in the fusion system of $\G(v).$ Lastly, conjugation from $(X^*)^{h_1...h_rg_rh_{r+1}l_{r+1}}$ to $(X^*)^{h_1...h_rg_rh_{r+1}g_{\infty}}$ is carried out in the fusion system of $\G_{\T'}$. Since $\S(e)=\S(v)$, the result follows.
\end{proof}

We end this section with a short example which illustrates how Theorem \ref{treesgroupthm} can be applied to produce fusion systems from group amalgams.

\begin{Ex}\label{ftex}
Let $H \cong$ Sym$(4)$, $S \cong D_8$ be a Sylow 2-subgroup of $H$ and let $V_1$ and $V_2$ be the two non-cyclic non-conjugate subgroups of order 4 in $S$. Also, let $\T$ be a tree consisting of two vertices $v_1$ and $v_2$ with a single edge $e$ connecting them. Form a tree of groups $(\T,\G)$ where $\G(v_1) =\G(v_2) = H$ and $\G(e)= S$ and where $V_i f_{ev}$ is normal in $\G(v_i)$ but not normal in $\G(v_{3-i})$ for $i=1,2.$ Let $(\T,\F,\S)$ be a tree of fusion systems induced by $(\T,\G)$ and observe that $(\T,\F,\S)$ satisfies $(H)$ so that it has a completion $\F_\T$ by Lemma \ref{compft}. It is well known that the group $PSL_3(2)$ is a faithful, finite completion of $(\T,\G)$ (see, for example, \cite[Theorem A]{Gol}) so that by Theorem \ref{treesgroupthm} we must have that $\F_\T$ is the fusion system of $PSL_3(2)$. 
\end{Ex}

\subsection{The $P$-orbit Graph}
We next consider what can said about the $P$-orbit graph in the special case where a tree of fusion systems $(\T,\F,\S)$ which satisfies $(H)$ is induced by a tree of groups $(\T,\G).$ It is shown that like the completion $\F_\T$, the isomorphism type of Rep$_{\F_\T}(P,\F)$ is independent of the choice of $(\T,\F,\S)$ since it may described in terms of the orbit graph $\tilde{\T}$ of $(\T,\G)$. 

We start by introducing some more notation. If $A$ and $B$ are groups, let Rep$(A,B)$ denote the set of orbits of the action of Inn$(B)$ on Hom$(A,B)$ by right composition. For each $\alpha \in$ Hom$(A,B)$, let $[\alpha]_B$ denote its class modulo Inn$(B)$. If $A$ and $B$ are subgroups of some group $C$, write Rep$_C(A,B)$ for the set of orbits of the action of Inn$(B)$ on Hom$_C(A,B) \cong N_C(A,B)/C_C(A)$ (where $N_C(A,B)=\{g \in C \mid A^g \leq B\}$ and $C_C(A)$ acts on $N_C(A,B)$ via left multiplication).

Now let $(\T,\G)$ be a tree of groups and fix a vertex $v_*$ of $\T$. For each $P \leq \G(v_*)$ let Rep$(P,\G(-))$ be the functor from $\T$ to $\mathfrak{Set}$ which sends vertices $v$ and edges $e$ respectively to Rep$(P,\G(e))$ and Rep$(P,\G(e))$ and which sends $f_{ev}$ to the map from Rep$(P,\G(e))$ to Rep$(P,\G(v))$ given by $$[\alpha]_{\G(e)} \longmapsto [\alpha \circ \iota_{\G(e)}^{\G(v)}]_{\G(v)}.$$ Similarly if $\G_\T$ is the completion of $(\T,\G)$ let Rep$_{\G_\T}(P,\G(-))$ be the functor from $\T$ to $\mathfrak{Set}$ which sends vertices $v$ and edges $e$ respectively to Rep$_{\G_\T}(P,\G(v))$ and Rep$_{\G_\T}(P,\G(e))$ and which sends $f_{ev}$ to the map given by $$[\alpha]_{\G(e)} \longmapsto [\alpha \circ \iota_{\G(e)}^{\G(v)}]_{\G(v)}.$$ (Observe that this well-defined). Now define: $$\mbox{Rep}(P,\G):= \underrightarrow{\mbox{hocolim}}_{
  \substack{
  \T}} \Rep(P,\G(-)) \mbox{, } \mbox{Rep}_{\G_\T}(P,\G):= \underrightarrow{\mbox{hocolim}}_{
  \substack{
  \T}} \Rep_{\G_\T}(P,\G(-))$$ both regarded as graphs in the usual way.
The next proposition allows us to compare Rep$_{\G_\T}(P,\G)$ with $\tilde{\T}$.
\begin{Prop}\label{phip}
Let $(\T,\G)$ be a tree of finite groups. Fix a vertex $v_*$ of $\T$ and a subgroup $P \leq \G(v_*)$ and write $\G_\T$ for the completion of $(\T,\G)$. There exists a graph isomorphism $$\begin{CD}
\tilde{\T}^P/C_{\G_\T}(P) @>\sim>> \Rep_{\G_\T}(P,\G),\\
\end{CD}$$ where $\tilde{\T}^P$ is the subgraph of $\tilde{\T}$ fixed under the action of $P$.
\end{Prop}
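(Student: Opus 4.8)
The plan is to realise the isomorphism as the ``identity'' on a common description of both graphs in terms of double cosets, and then to check that incidence is respected. Throughout I identify $\G(v),\G(e),C_{\G_\T}(P),N_{\G_\T}(P,\G(v)),\dots$ with their images in $G:=\G_\T$ and use the convention $P^g=g^{-1}Pg$.

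First I would describe $\tilde\T^P/C_G(P)$ explicitly. By construction the vertices of $\tilde\T$ lying over a vertex $v$ of $\T$ are labelled by the left cosets $g\G(v)$, with $G$ acting by left translation, and such a coset is fixed by $P$ precisely when $g^{-1}Pg\leq\G(v)$, \ie $g\in N_G(P,\G(v))$; so the $P$-fixed vertices over $v$ form $N_G(P,\G(v))/\G(v)$. As $C_G(P)$ centralises $P$ it preserves $\tilde\T^P$ and acts there by left translation, so the vertices of $\tilde\T^P/C_G(P)$ over $v$ are the double cosets in $C_G(P)\backslash N_G(P,\G(v))/\G(v)$; moreover, since $\tilde\T/G\simeq\T$ (Theorem \ref{fundbass}), vertices over distinct vertices of $\T$ lie in distinct $G$-orbits, hence in distinct $C_G(P)$-orbits, so $V(\tilde\T^P/C_G(P))=\bigsqcup_{v\in V(\T)}C_G(P)\backslash N_G(P,\G(v))/\G(v)$. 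The identical discussion for edges gives $E(\tilde\T^P/C_G(P))=\bigsqcup_{e\in E(\T)}C_G(P)\backslash N_G(P,\G(e))/\G(e)$, and a $P$-fixed edge $i\G(e)$ (\ie one with $i\in N_G(P,\G(e))$) joins the $P$-fixed vertices $i\G(v)$ and $i\G(w)$ when $e=(v,w)$, because then $i^{-1}Pi\leq\G(e)\leq\G(v)\cap\G(w)$.

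Next I would match this with $\Rep_{\G_\T}(P,\G)$, which, as in Lemma \ref{repfgraph}, is the graph with vertex set $\bigsqcup_v\Rep_{\G_\T}(P,\G(v))$ and edge set $\bigsqcup_e\Rep_{\G_\T}(P,\G(e))$. Sending $g\in N_G(P,\G(v))$ to the conjugation homomorphism $c_g\colon P\to\G(v)$ identifies $\Hom_{\G_\T}(P,\G(v))$ with $C_G(P)\backslash N_G(P,\G(v))$ (two such elements induce the same homomorphism iff they lie in the same coset $C_G(P)g$), and under this identification right composition with the inner automorphism of $\G(v)$ determined by $b\in\G(v)$ sends $c_g$ to $c_{gb}$; hence $\Rep_{\G_\T}(P,\G(v))=\Hom_{\G_\T}(P,\G(v))/\Inn(\G(v))$ becomes $C_G(P)\backslash N_G(P,\G(v))/\G(v)$, and likewise $\Rep_{\G_\T}(P,\G(e))\cong C_G(P)\backslash N_G(P,\G(e))/\G(e)$. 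Comparing with the previous paragraph, the rule $C_G(P)\,g\,\G(-)\mapsto[c_g]_{\G(-)}$ defines a bijection on vertex sets and a bijection on edge sets.

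Finally I would check this rule is a graph isomorphism. A $P$-fixed edge $i\G(e)$ over $e=(v,w)$ joins $i\G(v)$ to $i\G(w)$, so after passing to $C_G(P)$-orbits its class joins the classes of $i\G(v)$ and $i\G(w)$; the rule sends these to $[c_i]_{\G(e)}$, $[c_i]_{\G(v)}$ and $[c_i]_{\G(w)}$, and since $c_i\circ\iota_{\G(e)}^{\G(v)}=c_i$ as maps $P\to\G(v)$ (and likewise for $w$), the defining incidence relation of $\Rep_{\G_\T}(P,\G)$ (the evident analogue of Lemma \ref{repfgraph}) asserts precisely that $[c_i]_{\G(e)}$ joins $[c_i]_{\G(v)}$ to $[c_i]_{\G(w)}$. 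Since every edge of $\Rep_{\G_\T}(P,\G)$ arises from some $c_i$ with $i\in N_G(P,\G(e))$, incidence is preserved in both directions, so the rule is a graph isomorphism. There is no deep obstacle here; the main thing to get right is the parallel bookkeeping --- that $C_G(P)$ genuinely acts on $\tilde\T^P$, that $g\mapsto c_g$ is well defined and surjective with fibres the cosets $C_G(P)g$, and that the $\Inn(\G(v))$-action becomes right translation --- together with reconciling the incidence relation inherited from $\tilde\T$ with the one defining $\Rep_{\G_\T}(P,\G)$. This argument is essentially the group-level shadow of Proposition \ref{phipft}, and much of it will reappear there.
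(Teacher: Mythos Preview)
Your proposal is correct and follows essentially the same approach as the paper: both construct the isomorphism via the assignment $g\G(v)\mapsto[c_g]_{\G(v)}$ (and likewise for edges), verify that it is well defined exactly when $g\in N_G(P,\G(v))$, and check that the fibres are precisely the $C_G(P)$-orbits. The only cosmetic difference is that you package both sides as double coset spaces $C_G(P)\backslash N_G(P,\G(-))/\G(-)$ before matching them, whereas the paper defines the map $\Phi_P\colon\tilde\T^P\to\Rep_{\G_\T}(P,\G)$ directly and then shows it descends to an isomorphism on the quotient.
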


\begin{proof}
Define a map $$\begin{CD}
\Phi_P:\tilde{\T}^P @>>> \mbox{Rep}_{\G_\T}(P,\G)\\
\end{CD}$$ by sending a vertex $g\G(v)$ to $[c_g]_{\G(v)}$ where $c_g \in$ Hom$_{\G_\T}(P,\G(v))$ and similarly by sending an edge $g\G(e)$ to $[c_g]_{\G(e)}$ where $c_g \in$ Hom$_{\G_\T}(P,\G(e)).$ Note that this makes sense: if $g\G(v)$ is a vertex in $\tilde{\T}^P$ then for each $x \in P$, $xg\G(v)=g\G(v)$ which is equivalent to $P^g \leq \G(v)$. The same argument works for edges. To see that this map defines a homomorphism of graphs, note that if $g\G(v)$ and $h\G(w)$ are connected via an edge $i\G(e)$ in $\tilde{\T}^P$ then $g\G(v)=i\G(v)$ and $h\G(w) \in i\G(w)$ so that [$c_g]_{\G(v)}=[c_i \circ \iota_{\G(e)}^{\G(v)}]_{\G(v)}$ and $[c_h]_{\G(w)}=[c_i \circ \iota_{\G(e)}^{\G(v)}]_{\G(w)}.$ It is clear from the definition that $\Phi_P$ is surjective. If $[c_g]_{\G(v)}=[c_h]_{\G(w)}$ then $v=w$ and there is some $r \in \G(v)$ such that $c_g=c_h \circ c_r \in$ Hom$_{\G_\T}(P, \G(v))$. This is equivalent to the orbits of $g$ and $hr$ under the action of $C_{\G_\T}(P)$ on $N_{\G_\T}(P,\G(v))$ being equivalent so that there is some $x \in C_{\G_\T}(P)$ with $xg=hr$. Hence $h^{-1}xg \in \G(v)$, $h\G(v)=xg\G(v)$ and the vertices $h\G(v)$ and $g\G(v)$ lie in the same $C_{\G_\T}(P)$-orbit, as needed.
\end{proof}

We can now compare the graphs Rep$(P,\G)$ and Rep$(P,\F)$ when $\F$ is a choice of functor $\F_{\S(-)}(\G(-))$ associated to a tree of finite groups $(\T,\G)$ (see Lemma \ref{induce}). We need the following simple consequence of Sylow's Theorem.

\begin{Lem}\label{ftgt}
Let $G$ be a finite group and let $S$ be a Sylow $p$-subgroup of $G$. Then for each $p$-group $P$, there exists a bijection $$\begin{CD}
\Rep(P,\F_S(G)) @>\sim>> \Rep(P,G)\\
\end{CD}$$ given by the map $\Psi$ which sends $[\alpha]_{\F_S(G)}$ to $[\alpha \circ \iota_S^G]_G$ for each $\alpha \in \Hom(P,S).$
\end{Lem}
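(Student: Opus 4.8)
The plan is to verify directly that $\Psi$ is a well-defined bijection, the only substantive input being Sylow's theorem, which enters exactly once (for surjectivity). Before starting I would fix conventions for moving between a homomorphism $\alpha \in \Hom(P,S)$ and its composite $\alpha \circ \iota_S^G \in \Hom(P,G)$, and I would recall that by definition $\Hom_{\F_S(G)}(Q,R) = \Hom_G(Q,R)$ for $Q,R \leq S$, so that a morphism of $\F_S(G)$ is precisely a homomorphism between subgroups of $S$ that is induced by conjugation by some element of $G$. For well-definedness, I would take $\alpha \sim \beta$ in $\F_S(G)$, so that $\alpha \circ \gamma = \beta$ for some $\gamma \in \Iso_{\F_S(G)}(P\alpha,P\beta)$; writing $\gamma$ as the restriction of $c_g$ for $g \in G$ with $(P\alpha)^g = P\beta$ and composing with $\iota_S^G$ gives $\beta \circ \iota_S^G = (\alpha \circ \iota_S^G) \circ c_g$ as homomorphisms $P \to G$, whence $[\alpha \circ \iota_S^G]_G = [\beta \circ \iota_S^G]_G$ since $c_g \in \Inn(G)$. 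So $\Psi$ descends to a map on $\Rep(P,\F_S(G))$.

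For surjectivity, given $\rho \in \Hom(P,G)$ I would note that $P\rho$ is a finite $p$-subgroup of $G$, so by Sylow's theorem there is $g \in G$ with $(P\rho)^g \leq S$; then $\alpha := \rho \circ c_g$, viewed as an element of $\Hom(P,S)$, satisfies $\alpha \circ \iota_S^G = \rho \circ c_g$, so $\Psi([\alpha]_{\F_S(G)}) = [\rho \circ c_g]_G = [\rho]_G$. For injectivity, I would start from $[\alpha \circ \iota_S^G]_G = [\beta \circ \iota_S^G]_G$ with $\alpha,\beta \in \Hom(P,S)$, pick $g \in G$ realising this equality, observe that it forces $(x\alpha)^g = x\beta$ for all $x \in P$ and hence $(P\alpha)^g = P\beta$, and conclude that $c_g$ restricts to an element $\gamma \in \Iso_{\F_S(G)}(P\alpha,P\beta)$ with $\alpha \circ \gamma = \beta$, i.e. $\alpha \sim \beta$, so $[\alpha]_{\F_S(G)} = [\beta]_{\F_S(G)}$.

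I do not anticipate a genuine obstacle here: the statement is essentially a dictionary between the fusion-theoretic and group-theoretic versions of $\Rep$, and its content reduces to the elementary observation that conjugation maps among subgroups of $S$ are exactly the morphisms of $\F_S(G)$. The one place calling for minor care is keeping the inclusion and restriction notation consistent when passing homomorphisms between $S$ and $G$, which is why I would pin down those conventions at the outset; the single appeal to Sylow's theorem is precisely what makes the map surjective, and it is unsurprising that this is where the hypothesis $S \in \Syl_p(G)$ is used.
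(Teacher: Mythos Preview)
Your proposal is correct and matches the paper's proof essentially line for line: injectivity via pulling back the conjugating element $g$ to an $\F_S(G)$-isomorphism, and surjectivity via a single application of Sylow's theorem to conjugate the image into $S$. The only difference is that you explicitly verify well-definedness of $\Psi$, which the paper leaves implicit; this is a harmless (and arguably welcome) addition.
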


\begin{proof}
Let $\alpha,\beta \in$ Hom$(P,S)$ and suppose that $[\alpha \circ \iota_S^G]_G=[\beta \circ \iota_S^G]_G$. Then there exists a map $c_g \in$ Inn$(G)$ such that $\alpha \circ \iota_S^G \circ c_g=\beta \circ \iota_S^G$. This implies that $c_g|_S \in$ Hom$_{\F_S(G)}(P\alpha,P\beta)$ and $\alpha \circ c_g|_S = \beta$ so that $[\alpha]_{\F_S(G)}=[\beta]_{\F_S(G)}$ and $\Psi$ is injective. To see that $\Psi$ is surjective, let $[\gamma]_G  \in$ Rep$(P,G)$. By Sylow's Theorem there exists $g \in G$ such that $(P\gamma)^g \leq S$. Set $\varphi:=\gamma \circ c_g \in$ Hom$(P,S)$ and observe that $[\varphi \circ \iota_S^G]_G=[\gamma]_G,$ as required. 
\end{proof}

\begin{Cor}\label{corftgt}
Let $(\T,\G)$ be a tree of finite groups with completion $\G_\T$ and let $(\T,\F,\S)$ be a tree of fusion systems induced by $(\T,\G)$. Fix a vertex $v_*$ of $\T$ and a subgroup $P \leq \S(v_*)$. Then there exists a natural isomorphism of functors $$\begin{CD}
\Rep(P,\F(-)) @>\sim>> \Rep(P,\G(-))\\
\end{CD},$$ which induces a homotopy equivalence $\begin{CD}
\Rep(P,\F) @>\simeq>> \Rep(P,\G)\\
\end{CD}.$ Furthermore, if $(\T,\F,\S)$ satisfies $(H)$ and has completion $\F_\T$ then $$\begin{CD}
\Rep_{\F_\T}(P,\F) @>\simeq>> \Rep_{\G_\T}(P,\G)\\
\end{CD},$$ so that $\Rep_{\F_\T}(P,\F)$ is independent of the choice of tree of fusion systems $(\T,\F,\S)$.
\end{Cor}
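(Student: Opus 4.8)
The plan is to produce the natural isomorphism vertexwise and edgewise from Lemma~\ref{ftgt} and then transport everything through the homotopy colimit functor. For each vertex $v$, apply Lemma~\ref{ftgt} with $G=\G(v)$ and $S=\S(v)$ to get a bijection $\Psi_v\colon \Rep(P,\F(v))\to\Rep(P,\G(v))$, $[\alpha]_{\F(v)}\mapsto[\alpha\circ\iota_{\S(v)}^{\G(v)}]_{\G(v)}$, and likewise for each edge $e$ a bijection $\Psi_e\colon\Rep(P,\F(e))\to\Rep(P,\G(e))$. The substance is to check that $\{\Psi_v\}\cup\{\Psi_e\}$ is a natural transformation of $\T$-diagrams of sets, i.e. that for every edge $e=(v,w)$ the square with top $\Psi_e$, bottom $\Psi_v$, left the map induced by $f_{ev}$ in $\Rep(P,\F(-))$ and right the one in $\Rep(P,\G(-))$ commutes. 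Chasing $[\alpha]_{\F(e)}$, one route gives $[\alpha\circ\iota_{\S(e)}^{\G(e)}\circ\G(f_{ev})]_{\G(v)}$ and the other gives $[\alpha\circ\Phi_{ev}\circ\iota_{\S(v)}^{\G(v)}]_{\G(v)}$, where $\Phi_{ev}=\G(f_{ev})\circ c_{g_{ev}}|_{\S(e)}$ is the structure map of the induced tree of fusion systems (Lemma~\ref{induce}). Unwinding the definitions, as maps $\S(e)\to\G(v)$ one has $\Phi_{ev}\circ\iota_{\S(v)}^{\G(v)}=(\iota_{\S(e)}^{\G(e)}\circ\G(f_{ev}))\circ c_{g_{ev}}$, so the two differ only by the inner automorphism $c_{g_{ev}}$ of $\G(v)$; since that is invisible in $\Rep(P,\G(v))$, the two classes coincide and the square commutes. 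Applying the homotopy colimit functor to this natural isomorphism — and noting via Lemma~\ref{repfgraph} that both homotopy colimits are graphs — yields the asserted homotopy equivalence $\Rep(P,\F)\xrightarrow{\simeq}\Rep(P,\G)$ (in fact a graph isomorphism).

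For the final clause I would verify that, under $(H)$, the natural isomorphism just built carries the subfunctor $\Rep_{\F_\T}(P,\F(-))$ onto $\Rep_{\G_\T}(P,\G(-))$; it then automatically restricts to one on homotopy colimits. By Theorem~\ref{treesgroupthm}(b), $\F_\T=\F_S(\G_\T)$, so every $\alpha\in\Hom_{\F_\T}(P,\S(v))$ is of the form $c_g|_P$ with $g\in\G_\T$ and $P^g\leq\S(v)$; composing with $\iota_{\S(v)}^{\G(v)}$ lands in $\Hom_{\G_\T}(P,\G(v))$, which shows $\Psi_v$ sends $\Rep_{\F_\T}(P,\F(v))$ into $\Rep_{\G_\T}(P,\G(v))$. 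Conversely, given $[\gamma]_{\G(v)}\in\Rep_{\G_\T}(P,\G(v))$ with $\gamma=c_g|_P$ and $P^g\leq\G(v)$, Sylow's theorem inside $\G(v)$ produces $h\in\G(v)$ with $P^{gh}\leq\S(v)$; then $c_{gh}|_P\in\Hom_{\F_\T}(P,\S(v))$ and $\Psi_v([c_{gh}|_P]_{\F(v)})=[c_{gh}|_P]_{\G(v)}=[\gamma]_{\G(v)}$ because $c_h\in\Inn(\G(v))$. The edge case is identical, and well-definedness on the subfunctor is immediate since $\F(v)$-isomorphisms are conjugations by elements of $\G(v)$. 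Passing to homotopy colimits gives $\Rep_{\F_\T}(P,\F)\xrightarrow{\simeq}\Rep_{\G_\T}(P,\G)$; and since the right-hand side is $\tilde{\T}^P/C_{\G_\T}(P)$ by Proposition~\ref{phip}, which manifestly depends only on $(\T,\G)$ and not on the chosen Sylow subgroups, $\Rep_{\F_\T}(P,\F)$ is independent of the choice of $(\T,\F,\S)$.

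I expect the only genuine obstacle to be careful bookkeeping: the definitions of the functors $\Rep(P,\F(-))$ and $\Rep(P,\G(-))$ tacitly identify $\S(e)$ (respectively $\G(e)$) with its image in $\S(v)$ (respectively $\G(v)$) via the structure monomorphism, and one must consistently track the Sylow-adjusting elements $g_{ev}$ from Lemma~\ref{induce} and confirm that every discrepancy that surfaces is an inner automorphism, hence killed in the relevant $\Rep$-set. Beyond Lemma~\ref{ftgt} and Theorem~\ref{treesgroupthm}(b) there is no further ingredient; the content is entirely in the two naturality squares and the single restriction argument.
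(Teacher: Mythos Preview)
Your argument is correct. The first half --- constructing the natural isomorphism from Lemma~\ref{ftgt} and checking naturality by tracking the Sylow-correction elements $g_{ev}$ --- is exactly what the paper does, only you spell out the square that the paper dismisses as ``an immediate consequence of Lemma~\ref{ftgt}'' (the paper then cites \cite[Proposition IV.1.9]{GJ} for the passage to homotopy colimits).

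For the restriction to $\Rep_{\F_\T}$ and $\Rep_{\G_\T}$ you take a genuinely different route. You invoke Theorem~\ref{treesgroupthm}(b) ($\F_\T=\F_S(\G_\T)$) to verify \emph{at the level of functors} that each $\Psi_v$ carries $\Rep_{\F_\T}(P,\F(v))$ bijectively onto $\Rep_{\G_\T}(P,\G(v))$, and then pass to homotopy colimits. The paper instead argues \emph{topologically}: it never checks the restriction functorwise, but observes that the full graph isomorphism sends $[\iota_P^{\S(v_*)}]_{\F(v_*)}$ to $[\iota_P^{\G(v_*)}]_{\G(v_*)}$, identifies $\Rep_{\F_\T}(P,\F)$ as the connected component of the former via Proposition~\ref{phipft}(a), and uses Proposition~\ref{phip} to see that $\Rep_{\G_\T}(P,\G)$ is connected (as a quotient of the tree $\tilde{\T}^P$) and hence is the component of the latter. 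Your approach is more self-contained once Theorem~\ref{treesgroupthm} is available and sidesteps the connected-component machinery; the paper's approach keeps the corollary independent of Theorem~\ref{treesgroupthm} and instead exercises Propositions~\ref{phipft} and~\ref{phip}, which are the structural results the section is built to showcase.
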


\begin{proof}
The existence of a natural isomorphism of functors $$\begin{CD}
\eta:\mbox{Rep}(P,\F(-)) @>\sim>> \mbox{Rep}(P,\G(-))\\
\end{CD}$$ is an immediate consequence of Lemma \ref{ftgt}. By \cite[Proposition IV.1.9]{GJ} this natural isomorphism induces a homotopy equivalence $$\begin{CD}
\mbox{hocolim}(\eta): \mbox{Rep}(P,\F) @>\simeq>> \mbox{Rep}(P,\G)\\
\end{CD}.$$  If $(\T,\F,\S)$ satisfies $(H)$ then the above equivalence sends the vertex $[\iota_P^{\S(v_*)}]_{\F(v_*)}$ to $[\iota_P^{\G(v_*)}]_{\G(v_*)}.$ Clearly $[\iota_P^{\G(v_*)}]_{\G(v_*)}$ lies in Rep$_{\G_\T}(P,\G)$ (it is the image of the coset $\G(v_*)$ under $\Phi_P$). Since Rep$_{\G_\T}(P,\G)$ is connected (being the image under $\Phi_P$ of $\tilde{\T}^P$ which is connected), hocolim($\eta$) must restrict to a homotopy equivalence $$\begin{CD}
\mbox{Rep}_{\F_\T}(P,\F) @>\simeq>> \mbox{Rep}_{\G_\T}(P,\G)\\
\end{CD}$$ by Proposition \ref{phipft}. This completes the proof.
\end{proof}

\section{The Completion of a Tree of Fusion Systems}\label{compfus}
In this section, we prove our second main result, Theorem B, which gives conditions for a tree of fusion systems $(\T,\F,\S)$ to have a saturated completion $\F_\T$:

\begin{Thm}\label{completionsat}
Let $(\T,\F,\S)$ be a tree of fusion systems which satisfies $(H)$ and assume that $\F(v)$ is saturated for each vertex $v$ of $\T.$ Write $S:=\S(v_*)$ and $\F_\T$ for the completion of $(\T,\F,\S)$. Assume that the following hold for each $P \leq S$.
\begin{itemize}
\item[(a)] If $P$ is $\F_\T$-conjugate to an $\F(v)$-essential subgroup or $P=\S(v)$ then $P$ is $\F_\T$-centric.
\item[(b)] If $P$ is $\F_\T$-centric then $\Rep_{\F_\T}(P,\F)$ is a tree. 
\end{itemize}
Then $\F_\T$ is a saturated fusion system on $S$.
\end{Thm}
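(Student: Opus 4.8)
The plan is to deduce saturation from Theorem~\ref{alpthmconv}: it suffices to show that the $\F_\T$-centric subgroups of $S$ form a conjugation family for $\F_\T$, and that the two axioms of Definition~\ref{sat} hold whenever all subgroups involved are $\F_\T$-centric. Hypothesis~(a) will supply the conjugation family, while hypothesis~(b), through the $P$-orbit graph and Proposition~\ref{phipft}, will be the engine driving the verification of the saturation axioms.

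\emph{The conjugation family.} By Lemma~\ref{compft}, $\F_\T = \langle \F(v) \mid v \in V(\T) \rangle_S$. Each $\F(v)$ is saturated, so by Alperin's Theorem~\ref{alpthm} it is generated by the automorphism groups $\Aut_{\F(v)}(Q)$ as $Q$ runs over $\S(v)$ together with the fully $\F(v)$-normalised $\F(v)$-essential subgroups. Hypothesis~(a) asserts that every such $Q$ is $\F_\T$-centric, so $\F_\T$ is generated by automorphism groups of $\F_\T$-centric subgroups; hence the $\F_\T$-centric subgroups form a conjugation family (a collection which, by Lemma~\ref{centlem}(b), is closed under passage to overgroups).

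\emph{Axiom~(a) of Definition~\ref{sat}.} Let $P \leq S$ be $\F_\T$-centric and fully $\F_\T$-normalised. Full $\F_\T$-centralisation is automatic, since every $\F_\T$-conjugate $R$ of $P$ satisfies $C_S(R) = Z(R)$ and $|Z(R)| = |Z(P)|$. For full $\F_\T$-automisation I would use hypothesis~(b). The finite group $\Aut_{\F_\T}(P) \leq \Aut(P)$ acts on the graph $\Rep_{\F_\T}(P,\F)$ by $\psi\colon [\alpha]_{\F(v)} \mapsto [\psi \circ \alpha]_{\F(v)}$, and this action preserves the projection $\Rep_{\F_\T}(P,\F) \to \T$ sending $[\alpha]_{\F(v)}$ to $v$. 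Since $\T$ has no loops, the two ends of every edge of $\Rep_{\F_\T}(P,\F)$ lie over distinct vertices of $\T$, so no element of $\Aut_{\F_\T}(P)$ inverts an edge; as $\Rep_{\F_\T}(P,\F)$ is a tree by~(b), and a finite group acting on a tree either fixes a vertex or inverts an edge, $\Aut_{\F_\T}(P)$ fixes a vertex $[\alpha]_{\F(v)}$. Spelling out the equivalence relation at this fixed point shows that conjugation by $\alpha$ identifies $\Aut_{\F_\T}(P)$ with $\Aut_{\F(v)}(P\alpha)$, and after replacing $\alpha$ within its $\F(v)$-class I may take $Q := P\alpha$ fully $\F(v)$-normalised. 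Saturation of $\F(v)$ then gives $\Aut_{\S(v)}(Q) \in \Syl_p(\Aut_{\F(v)}(Q))$, and a counting argument comparing $|\Aut_S(P)| = |N_S(P):Z(P)|$ with $|\Aut_{\S(v)}(Q)| = |N_{\S(v)}(Q):Z(Q)|$ (using that $P$ is fully $\F_\T$-normalised and that $P$, $Q$ are isomorphic) forces $\Aut_S(P) \in \Syl_p(\Aut_{\F_\T}(P))$. In particular $Q$ is itself fully $\F_\T$-normalised, $N_S(Q) = N_{\S(v)}(Q)$, and $\Aut_{\F_\T}(Q) = \Aut_{\F(v)}(Q)$; I would record these equalities for later use.

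\emph{Axiom~(b) of Definition~\ref{sat}, the main obstacle.} Let $P$ be $\F_\T$-centric and $\varphi \in \Hom_{\F_\T}(P,S)$ (the requirement that $P\varphi$ be fully $\F_\T$-centralised is automatic here). I would first make the standard reduction to \emph{receptivity}: it is enough to show that a fully $\F_\T$-normalised $\F_\T$-centric subgroup $P'$ extends every $\F_\T$-morphism onto it over the corresponding $N_{(-)}$, the full automisation just proved being used to replace an incoming morphism by one whose $N_{(-)}$ is as large as possible. By the recorded equalities one may take $P' = Q \leq \S(v)$ with $\Aut_{\F_\T}(P') = \Aut_{\F(v)}(P')$ and $N_S(P') = N_{\S(v)}(P')$. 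To extend a given $\psi \in \Hom_{\F_\T}(R,P')$ over $N_\psi$, I would decompose $\psi$ as a zig-zag $\psi_1 \cdots \psi_n$ through the saturated vertex systems $\F(v_i)$ via Lemma~\ref{compft}, arrange (again via the normaliser equalities, now applied at the intermediate subgroups, each taken fully normalised in the vertex system containing it) that the extension axiom of each $\F(v_i)$ applies, and splice the one-step extensions. The delicate point — and the only place where hypothesis~(b) is genuinely required — is the bookkeeping ensuring that the successively computed normaliser subgroups coincide with $N_\psi$ computed in $\F_\T$; this is precisely what the fixed-vertex argument of the previous step delivers. With axioms (a) and (b) verified for $\F_\T$-centric subgroups, Theorem~\ref{alpthmconv} yields that $\F_\T$ is saturated.
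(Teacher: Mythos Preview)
Your treatment of the conjugation family and of axiom~(a) is essentially the paper's argument (your observation that the projection to $\T$ forbids edge inversions is a nice sharpening of the paper's ``fixed vertex or edge''). The problem lies in your handling of axiom~(b).

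The zig-zag extension you sketch does not work as written. If $\psi=\psi_1\cdots\psi_n$ with $\psi_i\in\Mor(\F(v_i))$, the normaliser $N_\psi$ is defined by a condition involving $\Aut_S(P\psi)$ at the \emph{far end}, whereas each $N_{\psi_i}$ sees only $\Aut_{\S(v_i)}(P_i)$; there is no mechanism making $N_\psi\leq N_{\psi_1}$, nor making the image of an extension $\tilde\psi_i$ land inside the domain of the next one. Arranging intermediate $P_i$ to be fully $\F(v_i)$-normalised does not repair this, and the ``normaliser equalities'' you recorded hold only for the single chosen representative $Q$, not for the intermediate subgroups of an arbitrary decomposition. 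Your closing sentence (``this is precisely what the fixed-vertex argument of the previous step delivers'') is not justified: the fixed-vertex argument controls $\Aut_{\F_\T}(P)$ globally but says nothing about how the step-by-step normalisers line up.

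The paper avoids this difficulty entirely. After first replacing each $\F(e)$ by the trivial system $\F_{\S(e)}(\S(e))$ (harmless for $\F_\T$ and for hypothesis~(b)), it proves a separate lemma (Proposition~\ref{resprop}): for $P\unlhd Q$ with $P$ $\F_\T$-centric and $\Rep_{\F_\T}(P,\F)$ a tree, the image of the restriction map $\res^Q_P:\Rep_{\F_\T}(Q,\F)\to\Rep_{\F_\T}(P,\F)$ is \emph{exactly} the fixed-point set $\Rep_{\F_\T}(P,\F)^{\Aut_Q(P)}$. The proof of that proposition \emph{is} the inductive edge-by-edge extension you have in mind, but done along the tree $\Rep_{\F_\T}(P,\F)^K$ starting from the known preimage $[\iota_Q^S]$, so that connectivity of the fixed-point tree drives the induction and the normaliser bookkeeping reduces to a single local calculation at each edge. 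Axiom~(b) then falls out immediately: the vertex $[\varphi]_{\F(v_*)}$ is fixed by $K=\Aut_{N_\varphi}(P)$, hence lies in the image of $\res^{N_\varphi}_P$, giving $\psi\in\Hom_{\F_\T}(N_\varphi,S)$ with $[\psi|_P]_{\F(v_*)}=[\varphi]_{\F(v_*)}$; one then adjusts by an $\F(v_*)$-morphism. You are missing this lemma, and your sketch does not contain its proof.
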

Conditions (a) and (b) are respectively motivated by Theorems \ref{alpthm} and \ref{alpthmconv}. The former condition implies that $\F_\T$ is generated by its $\F_\T$-centric subgroups, while the latter ensures that the saturation axioms (Definition \ref{sat} (a) and (b)) hold for all such subgroups. Thus the saturation of $\F_\T$ will ultimately follow from Theorem \ref{alpthmconv}. Theorem \ref{completionsat} was inspired by a theorem of Broto, Levi and Oliver (\cite[Theorem 4.2]{BLO4}) and we will deduce their result from ours in Corollary \ref{completionsatgroups}.

\subsection{Finite Group Actions on Trees}
Before embarking on the proof of Theorem \ref{completionsat}, it will be important to understand the way in which the automiser Aut$_{\F_\T}(P)$ can act on the $P$-orbit graph Rep$_{\F_\T}(P,\F)$. Note that we may draw an analogy here with the situation for trees of groups $(\T,\G)$ where there exists an action of the completion $\G_\T$ on the orbit tree $\tilde{\T}$. We begin with two important concepts which play a pivotal role in the proof of Theorem \ref{completionsat}.

\begin{Def}
Let $(\T,\F,\S)$ be a tree of fusion systems which satisfies $(H)$ and write $\F_\T$ for the completion of $(\T,\F,\S)$. 

\begin{itemize}
\item[(a)] For each pair of $p$-groups $P \leq Q$ define the \textit{restriction map} from $Q$ to $P$: 
$$\begin{CD}
\mbox{res}^Q_P:\mbox{Rep}_{\F_\T}(Q,\F) @>>> \mbox{Rep}_{\F_\T}(P,\F)\\
\end{CD}$$
to be the map which sends a vertex $[\varphi]_{\F(v)}$ in Rep$_{\F_\T}(Q,\F)$ to $[\varphi|_P]_{\F(v)}$ in Rep$_{\F_\T}(P,\F)$. 
\item[(b)] For each $P \leq \S(v_*)$ the \textit{action} of $\F_\T$ on Rep$_{\F_\T}(P,\F)$ is given by the group action of Aut$_{\F_\T}(P)$ on Rep$_{\F_\T}(P,\F)$ which sends a vertex $[\varphi]_{\F(v)}$ to $[\psi \circ \varphi]_{\F(v)}$ for each $\psi \in$ Aut$_{\F_\T}(P)$.
\end{itemize} 
\end{Def}
We quickly check that this definition make sense. Firstly, one observes that res$^Q_P$ defines a homomorphism of graphs. To see this, note that if $[\alpha]_{\F(v)}$ is incident to some edge $[\beta]_{\F(e)}$ in Rep$_{\F_\T}(Q,\F)$, then there exists $\psi \in$ Hom$_{\F(v)}(Q\alpha,\S(v))$ such that $\beta \circ \iota_{\S(e)}^{\S(v)}=\alpha \circ \psi$. Hence $\beta|_P \circ \iota_{\S(e)}^{\S(v)}=\alpha|_P \circ \psi|_{P\alpha}$ and $[\alpha|_P]_{\F(v)}$ is incident to $[\beta|_P]_{\F(e)}$ in Rep$_{\F_\T}(P,\F)$. Secondly, note the described action of  Aut$_{\F_\T}(P)$ on Rep$_{\F_\T}(P,\F)$ makes sense: if $[\varphi]_{\F(v)}$ is a vertex of Rep$_{\F_\T}(P,\F)$ and $\psi \in$ Aut$_{\F_\T}(P)$, then $\psi \circ \varphi \in$ Hom$_{\F_\T}(P,\S(v))$ so that $[\psi \circ \varphi]_{\F(v)}$ is also a vertex of Rep$_{\F_\T}(P,\F)$.

The following proposition gives conditions on $(\T,\F,\S)$ which will allow us to calculate the image of res$^Q_P$ in terms of certain fixed points of the action of $\F_\T$ on Rep$_{\F_\T}(P,\F)$.

\begin{Prop}\label{resprop}
Let $(\T,\F,\S)$ be a tree of fusion systems which satisfies $(H)$ and assume that the following hold:
\begin{itemize}
\item[(a)] $\F(v)$ is saturated for each vertex $v$ of $\T;$ and
\item[(b)] $\F(e)$ is a trivial fusion system $\F_{\S(e)}(\S(e))$ for each edge $e$ of $\T$.
\end{itemize}
Let $P$ and $Q$ be $p$-groups with $P$ $\unlhd$ $Q \leq S(v_*)$ and assume that $P$ is $\F_\T$-centric, where $\F_\T$ is the completion of $(\T,\F,\S)$. If $\Rep_{\F_\T}(P,\F)$ is a tree, then the image of the restriction homomorphism 
$$\begin{CD}
\res^Q_P:\Rep_{\F_\T}(Q,\F) @>>> \Rep_{\F_\T}(P,\F)\\
\end{CD}$$ is equal to $\Rep_{\F_\T}(P,\F)^{\Aut_Q(P)}$. 
\end{Prop}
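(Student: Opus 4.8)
The proof will establish the two inclusions separately; the inclusion $\Im(\res^Q_P)\subseteq\Rep_{\F_\T}(P,\F)^{\Aut_Q(P)}$ is elementary, and all the work is in the reverse one.

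\smallskip\noindent\textbf{The easy inclusion.} Given $\varphi\in\Hom_{\F_\T}(Q,\S(v))$ and $g\in Q$, the hypothesis $P\unlhd Q$ makes $g\varphi$ normalise $P\varphi$ inside $\S(v)$, and a direct computation gives $c_g\circ(\varphi|_P)=(\varphi|_P)\circ c_{g\varphi}$ with $c_{g\varphi}\in\Aut_{\S(v)}(P\varphi)\subseteq\Iso_{\F(v)}(P\varphi,P\varphi)$. Hence $[c_g\circ(\varphi|_P)]_{\F(v)}=[\varphi|_P]_{\F(v)}$, so $\res^Q_P([\varphi]_{\F(v)})=[\varphi|_P]_{\F(v)}$ is fixed by $\Aut_Q(P)$. (This uses neither the centricity of $P$ nor the tree hypothesis.)

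\smallskip\noindent\textbf{Reduction of the reverse inclusion.} Write $R:=\Rep_{\F_\T}(P,\F)$, which is a tree by hypothesis, and note that $\Aut_Q(P)$ is a finite $p$-group acting on $R$ through the action of $\Aut_{\F_\T}(P)$. The base vertex $[\iota_P^S]_{\F(v_*)}$ has stabiliser $\Aut_{\F(v_*)}(P)\supseteq\Aut_S(P)\supseteq\Aut_Q(P)$, so it lies in $R^{\Aut_Q(P)}$; since the fixed-point set of a group acting on a tree is, when non-empty, a subtree, $R^{\Aut_Q(P)}$ is a subtree of $R$. On the other hand $\Im(\res^Q_P)$ is a connected subgraph of $R$ containing $[\iota_P^S]_{\F(v_*)}=\res^Q_P([\iota_Q^S]_{\F(v_*)})$, being the image of the connected graph $\Rep_{\F_\T}(Q,\F)$ (connected by Proposition \ref{phipft}(a)) under the graph homomorphism $\res^Q_P$. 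Combining this with the easy inclusion, it suffices to prove the following \emph{propagation statement}: if $x\in\Im(\res^Q_P)$ and $y\in R^{\Aut_Q(P)}$ is joined to $x$ by an edge of $R$, then $y\in\Im(\res^Q_P)$; for then one walks along the geodesic inside the subtree $R^{\Aut_Q(P)}$ from $[\iota_P^S]_{\F(v_*)}$ to an arbitrary fixed vertex, applying the statement at each step.

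\smallskip\noindent\textbf{The propagation statement.} The edge joining $x$ and $y$ lies over an edge $e=(a,b)$ of $\T$; label so that $b$ is further from $v_*$, so that $(H)$ (together with the remarks following it) gives $\S(e)=\S(b)\leq\S(a)$. Since $\F(e)$ is trivial we may choose a representative $\eta$ of the edge with $R_0:=P\eta\leq\S(e)$, so that the two endpoints are $[\eta\circ\iota_{\S(e)}^{\S(a)}]_{\F(a)}$ and $[\eta\circ\iota_{\S(e)}^{\S(b)}]_{\F(b)}$; as isomorphisms onto $R_0$ these two maps agree, with common value $\bar\eta\colon P\xrightarrow{\ \sim\ }R_0$. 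If $y$ is the endpoint at $a$, then a preimage in $\Rep_{\F_\T}(Q,\F)$ of the vertex $x$ (which lies over $b$), composed with the inclusion $\S(b)=\S(e)\hookrightarrow\S(a)$, is a preimage of $y$, so this case is automatic. So suppose $y$ is the endpoint at $b$. Both $x$ and $y$ are $\Aut_Q(P)$-fixed, and since the stabiliser of a vertex $[\psi]_{\F(v)}$ in $\Aut_{\F_\T}(P)$ equals $\{\psi\gamma\psi^{-1}\mid\gamma\in\Aut_{\F(v)}(P\psi)\}$, applying this at both endpoints shows that the single $p$-subgroup $\Gamma:=\bar\eta^{-1}\,\Aut_Q(P)\,\bar\eta$ of $\Aut(R_0)$ lies in $\Aut_{\F(a)}(R_0)$ \emph{and} in $\Aut_{\F(b)}(R_0)$. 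As $R_0\in P^{\F_\T}$ and $P$ is $\F_\T$-centric, $R_0$ is $\F(a)$- and $\F(b)$-centric with $C_{\S(a)}(R_0)=Z(R_0)=C_{\S(b)}(R_0)$, and $\Gamma\cong\Aut_Q(P)\cong Q/Z(P)$. From $x\in\Im(\res^Q_P)$ one obtains, after adjusting the given preimage of $x$ using saturation of $\F(a)$, a subgroup $R_1\leq N_{\S(a)}(R_0)$ with $R_0\unlhd R_1$ and $R_1/Z(R_0)\cong\Gamma$, together with a morphism $Q\to R_1$ in $\F_\T$ restricting to $\bar\eta$ on $P$ — that is, the extension of $\Gamma$ by $Z(R_0)$ is realised inside $\S(a)$. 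The required $\varphi'\colon Q\to\S(b)$ is produced by transporting this to $\S(b)$: arranging (using the freedom in the choice of edge maps noted after Lemma \ref{compft}) that $R_0$ is fully normalised in $\F(a)$ and in $\F(b)$, the normaliser systems $N_{\F(a)}(R_0)$ and $N_{\F(b)}(R_0)$ are saturated (Theorem \ref{nfkq}) and constrained, hence have finite models (Theorem \ref{const}), in which $R_1$ appears as the $p$-part of the preimage of $\Gamma$ on the $a$-side; realising the corresponding preimage of $\Gamma$ on the $b$-side inside $\S(b)$ gives $\varphi'$ with $\varphi'|_P$ in the class of $\bar\eta=\eta\circ\iota_{\S(e)}^{\S(b)}$, i.e. $\res^Q_P([\varphi']_{\F(b)})=y$.

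\smallskip\noindent\textbf{Expected difficulty.} The crux is the outward case of the propagation statement, namely guaranteeing that the group extension of $\Gamma$ by $Z(R_0)$ realised inside $\S(a)$ is also realised inside the smaller group $\S(b)=\S(e)$, compatibly with the fixed identification of $R_0$; this is precisely where saturation of the vertex systems, the triviality of the edge systems, centricity of $P$, and Hypothesis $(H)$ must all be used together, and it is the step requiring the most care.
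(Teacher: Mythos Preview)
Your reduction and the easy inclusion are correct and match the paper. The error is in the propagation step.

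\textbf{The ``automatic'' case is not automatic.} When $y$ lies over the inner vertex $a$ and $x$ over the outer vertex $b$, you take a preimage $\varphi\colon Q\to\S(b)$ of $x$ and compose with $\S(b)\hookrightarrow\S(a)$. For $[\varphi\circ\iota]_{\F(a)}$ to be a preimage of $y=[\eta\circ\iota]_{\F(a)}$ you would need $\varphi|_P$ and $\eta$ to be $\F(a)$-equivalent; but $[\varphi|_P]_{\F(b)}=x=[\eta]_{\F(b)}$ only says they are $\F(b)$-equivalent, and $\F(a)$ and $\F(b)$ share only the trivial edge system $\F(e)$. So the step fails as written.

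\textbf{What the paper actually does.} The key point you do not exploit is that the \emph{edge} $[\eta]_{\F(e)}$ itself lies in $R^{\Aut_Q(P)}$ (since $R^{\Aut_Q(P)}$ is a subtree containing the adjacent vertices $x,y$). Because $\F(e)$ is trivial, this gives the sharp statement $K^\eta\le\Aut_{\S(e)}(P\eta)$, not merely containment in $\Aut_{\F(a)}(R_0)\cap\Aut_{\F(b)}(R_0)$. In your ``inward'' case the paper then works inside $\F(b)$: writing $\chi\in\Iso_{\F(b)}(P\varphi,P\eta)$ with $\varphi|_P\circ\chi=\eta$, one has $(c_g)^{\chi}\in K^\eta\le\Aut_{\S(b)}(P\eta)$ for $g\in Q\varphi$, so $Q\varphi\le N_\chi$; since $P\eta$ is $\F(b)$-centric (hence fully centralised), $\chi$ extends to $\bar\chi\colon Q\varphi\to\S(b)$, and $\bar\beta:=\varphi\circ\bar\chi\colon Q\to\S(b)=\S(e)$ satisfies $\bar\beta|_P=\eta$. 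Thus $[\bar\beta]_{\F(e)}$ is a lifted edge, and both its endpoints lie in the image. This is precisely the case the paper writes out (its assumption ``$\S(e)=\S(v)$'' is your ``known lift at $b$'').

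\textbf{Your outward case is overbuilt.} The constrained-model detour is unnecessary. Using the edge-fixed observation $K^\eta\le\Aut_{\S(e)}(R_0)$, the same extension argument carried out in $\F(a)$ yields $\bar\beta\colon Q\to\S(a)$ with $\bar\beta|_P=\eta$; then for $g\in Q$ one has $c_{g\bar\beta}|_{R_0}\in K^\eta\le\Aut_{\S(e)}(R_0)$, and centricity of $R_0$ in $\S(a)$ forces $g\bar\beta\in Z(R_0)\cdot N_{\S(e)}(R_0)\subseteq\S(e)$. Hence $Q\bar\beta\le\S(e)=\S(b)$ automatically, giving the required lift over $b$ without any appeal to models of $N_{\F(a)}(R_0)$ or $N_{\F(b)}(R_0)$.
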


\begin{proof}
Set $K:=$ Aut$_Q(P)$. If $[\alpha]$ is a vertex or edge in Rep$_{\F_\T}(Q, \F)$ then $[\alpha|_P] \in$ Rep$_{\F_\T}(P, \F)^K$ since $$c_g \circ \alpha|_P = \alpha|_P \circ (\alpha^{-1}|_{P\alpha} \circ c_g \circ \alpha|_P)$$ for each $g \in Q$. It remains to prove that each vertex or edge of Rep$_{\F_\T}(P, \F)^K$ is the image of \textit{some} vertex or edge of Rep$_{\F_\T}(Q, \F)$. Since Rep$_{\F_\T}(P, \F)$ is a tree and $K$ is finite, Rep$_{\F_\T}(P, \F)^K$ is also a tree. In particular Rep$_{\F_\T}(P, \F)^K$ is connected. Hence to complete the proof, it will be enough to show that for each edge $[\beta]_{\F(e)}$ in Rep$_{\F_\T}(P, \F)^K$ which is connected to the image $[\alpha|_P]_{\F(v)}$ of some vertex $[\alpha]_{\F(v)}$ under res$_P^Q$, there exists an edge $[\bar{\beta}]_{\F(e)}$ in Rep$_{\F_\T}(Q,\F)$ connected to $[\alpha]_{\F(v)}$ whose image under res$_P^Q$ is $[\beta]_{\F(e)}$. Since $[\iota_Q^S]_{\F(v_*)}$ gets sent to $[\iota_P^S]_{\F(v_*)}$ under res$_P^Q$, we may assume (inductively) that we have chosen $e$ to be the edge to which $v$ is incident in the unique minimal path from $v$ to $v_*$. In particular we may assume that $\S(e)=\S(v)$.  By assumption, there exists $\psi \in$ Hom$_{\F(v)}(P\beta,P\alpha)$ such that $\beta \circ \psi=\alpha|_P$. Set $R:=N_{\S(e)}^{K^\beta}(P\beta)$. Since $[\beta]_{\F(e)}$ is fixed by $K$, $K^\beta \leq$ Aut$_{\S(e)}(P\beta)$ so Aut$_R(P\beta)=K^{\beta}$. Now, Aut$_R(P\beta)^\psi=K^{\beta\psi}=K^\alpha=$ Aut$_{Q\alpha}(P\alpha)$ and hence $$Q\alpha \leq N_{\psi^{-1}}=\{g \in N_{\S(v)}(P\alpha) \mid (c_g)^{\psi^{-1}} \in \mbox{ Aut}_S(P\beta)\}.$$ Since $\F(v)$ is saturated and $P\alpha$ is $\F(v)$-centric (and therefore fully $\F(v)$-centralised), $\psi^{-1}$ extends to a map $\rho \in $ Hom$_{\F(v)}(Q\alpha, \S(v))$. Set $\bar{\beta}:=\alpha \circ \rho \in$ Hom$_{\F(v)}(Q,\S(v))$ so that res$_P^Q([\bar{\beta}]_{\F(e)})= [\bar{\beta}|_{P}]_{\F(e)}=[\beta]_{\F(e)}$ and $[\alpha]_{\F(v)}$ is incident to $[\bar{\beta}]_{\F(e)}$ in Rep$_{\F_\T}(Q,\F)$. This completes the proof.
\end{proof}

\subsection{Proof of Theorem B}
We now have all of the tools necessary to prove Theorem B. We remind the reader that this theorem was originally conjectured based on the corresponding statement for trees of group fusion systems, \cite[Theorem 4.2]{BLO4}, and the proof in that case may be seen to rely on some fairly deep homotopy theory. No deep results are required in our proof and for this reason we feel the statement is more naturally understood in the context of trees of fusion systems. Nevertheless, we will present \cite[Theorem 4.2]{BLO4} as Corollary \ref{completionsatgroups} below.

A key step in our proof is the observation that the completion $\F_\T$ of a tree of fusion systems $(\T,\F,\S)$ is independent of where $\F$ sends edges $e$ of $\T$, provided $\F_{\S(e)}(\S(e)) \subseteq \F(e) \subseteq \F(v)$ (inside $\F_\T$). This means that we are able to assume that $\F(e)$ is the trivial fusion system $\F_{\S(e)}(\S(e))$. 

\begin{proof}[Proof of Theorem B]
Let $(\T,\F',\S)$ be the tree of fusion systems obtained from $(\T,\F,\S)$ by setting $\F'(v):=\F(v)$ for each vertex $v$ of $\T$ and $\F'(e):=\F_{\S(e)}(\S(e))$ for each edge $e$ of $\T.$ Then $(\T,\F',\S)$ satisfies $(H)$ and has completion $\F_\T$. Also, since any cycle in Rep$_{\F_\T}(P,\F')$ is a cycle in Rep$_{\F_\T}(P,\F)$, hypothesis (b) in the theorem remains true for $(\T,\F',\S).$ Hence we may assume from now on that $(\T,\F,\S)=(\T,\F',\S)$.

Since $\F(v)$ is a saturated fusion system on $\S(v)$ for each vertex $v$ in $\T$, by Theorem \ref{alpthm} $$\F(v)=\langle  \{\mbox{Aut}
_{\F(v)}(P) \mid P \mbox{ is $\F(v)$-essential }\}\cup \{\mbox{Aut}_{\F(v)}(\S(v)) \} \rangle_{\S(v)}.$$ 
By hypothesis, each $\F(v)$-essential subgroup and each $\S(v)$ is an $\F_\T$-centric subgroup so by the definition of $\F_\T$ we have  $$\F_\T=\langle  \mbox{Aut}
_{\F(v)}(P) \mid v \in V(\T), P \mbox{ is $\F_\T$-centric} \rangle_S.$$
Hence by Theorem \ref{alpthmconv} it suffices to verify that axioms (a) and (b) in Definition \ref{fus} hold for $\F_\T$-centric subgroups.

First we prove Definition \ref{fus} (a) holds. Let $P$ be a fully $\F_\T$-normalised, $\F_\T$-centric subgroup of $S$. Then since $|C_S(P\varphi)|=|Z(P\varphi)|=|Z(P)|$ for each $\varphi \in$ Hom$_{\F_\T}(P,S)$, $P$ is certainly fully $\F_\T$-centralised.  It remains to prove that $P$ is fully $\F_\T$-automised. Each $\varphi \in$ Aut$_{\F_\T}(P)$ acts on Rep$_{\F_\T}(P,\F)$ by sending a vertex $[\alpha]_{\F(v)}$ to $[\varphi \circ \alpha]_{\F(v)}$. Since Aut$_{\F_\T}(P)$ is finite and Rep$_{\F_\T}(P,\F)$ is a tree, there exists a vertex or edge $[\alpha]_{\F(r)}$ in Rep$_{\F_\T}(P,\F)$ which is fixed under the action of Aut$_{\F_\T}(P)$ ($r \in V(\T) \cup E(\T)$). This means that Aut$_{\F(r)}(P\alpha)=$ Aut$_{\F_\T}(P)^\alpha$. 

Choose $\beta \in$ Hom$_{\F(r)}(P\alpha, S(r))$ so that $P\alpha\beta$ is fully $\F(r)$-normalised. Since $\F(r)$ is saturated, Aut$_{S(r)}(P\alpha\beta) \in$ Syl$_p($Aut$_{\F(r)}(P\alpha\beta))$. Now $$|\mbox{Aut}_S(P)|=|N_S(P)|/|Z(P)| \geq |N_S(P\alpha\beta)|/|Z(P\alpha\beta)| \geq |\mbox{Aut}_{S(r)}(P\alpha\beta)|,$$ where the first inequality follows from the fact that $P$ is fully $\F_\T$-normalised. Since $$|\mbox{Aut}_{\F(r)}(P\alpha\beta)|=|\mbox{Aut}_{\F(r)}(P\alpha)|=|\mbox{Aut}_{\F_\T}(P)|,$$ we must have Aut$_S(P) \in$ Syl$_p($Aut$_{\F_\T}(P))$, so that $P$ is fully $\F_\T$-automised, as needed.

Next we prove that (b) holds in Definition \ref{fus}.  Fix $\varphi \in$ Hom$_{\F_\T}(P,S)$ where $P$ is $\F_\T$-centric. We need to show that there is some $\bar{\varphi} \in$ Hom$_{\F_\T}(N_\varphi, S)$ which extends $\varphi$. Set $K:=$ Aut$_{N_\varphi}(P)=$ Aut$_S(P)$ $\cap$ Aut$_S(P\varphi)^{\varphi^{-1}}.$ If $c_g \in K$ then $c_g \circ \varphi=\varphi \circ c_h$ for some $h \in$ Aut$_S(P\varphi)$ and this show that the vertex $[\varphi]_{\F(v_*)}$ in Rep$_{\F_\T}(P,\F)$ is fixed by the action of $K$. By Proposition \ref{resprop} applied with $Q=N_\varphi$ there is some $[\psi]_{\F(v_*)} \in$ Rep$_{\F_\T}(N_\varphi, \F)$ with $[\psi|_P]_{\F(v_*)}=[\varphi]_{\F(v_*)}$. This implies that there is $\rho \in$ Iso$_{\F(v_*)}(P\varphi,P\psi)$ such that $\psi|_P=\varphi \circ \rho$. Since $\F(v_*)$ is saturated, $\rho^{-1}|_{P\psi}= (\psi|_P)^{-1} \circ \varphi$ extends to a map $\chi \in $ Hom$_{\F(v_*)}((N_\varphi)\psi, S)$. To see this, observe that $(N_\varphi)\psi \leq N_{\rho^{-1}}$ since for $g \in N_\varphi$, $(c_{g\psi})^{\rho^{-1}}=(c_g)^\varphi \in$ Aut$_S(P\varphi)=$ Aut$_S(P\psi\rho^{-1}).$ Hence we set $\bar{\varphi}:= \psi \circ \chi \in$ Hom$_\F(N_\varphi,S)$ so that $\bar{\varphi}$ extends $\varphi$, as needed. This completes the proof of (b) in Definition \ref{fus} and hence the proof of the theorem.
\end{proof}

\begin{Cor}\label{completionsatgroups}
Let $(\T,\G)$ be a tree of groups with completion $\G_\T$ and assume that there exists a vertex $v_*$ of $\T$ for which the following is true: for each $v \in V(\T)$ not equal to $v_*$, $p \nmid |\G(v):\G(e)|$ where $e$ is the to which $v$ is incident in the unique minimal path from $v$ to $v_*$. Write $S:=\S(v_*)$ and assume that for each $P \leq S$,
\begin{itemize}
\item[(a)] if $P$ is $\G_\T$-conjugate to an $\F_{\S(v)}(\G(v))$-essential subgroup then $P$ is $\F_S(G_\T)$-centric; and
\item[(b)]  if $P$ is $\F_S(G_\T)$-centric then $\tilde{\T}^P/C_{\G_\T}(P)$ is a tree. 
\end{itemize}
Then $\F_S(\G_\T)$ is saturated.
\end{Cor}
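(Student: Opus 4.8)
The plan is to deduce the corollary from Theorem \ref{completionsat} (together with Theorems \ref{fsgsat} and \ref{treesgroupthm}), by realising $\F_S(\G_\T)$ as the completion of a suitable tree of fusion systems. First I would apply Lemma \ref{induce} to fix Sylow $p$-subgroups and obtain a tree of fusion systems $(\T,\F,\S)$ induced by $(\T,\G)$, with $\F(v)=\F_{\S(v)}(\G(v))$ and $\F(e)=\F_{\S(e)}(\G(e))$. The hypothesis $p\nmid|\G(v):\G(e)|$ for the edge $e$ on the minimal path from $v$ to $v_*$ says that a Sylow $p$-subgroup of $\G(e)$ is one of $\G(v)$; since the construction in Lemma \ref{induce} conjugates $\S(e)$ inside $\S(v)$ and $|\S(e)|=|\S(v)|$, this forces $\S(e)=\S(v)$, so $(\T,\F,\S)$ satisfies $(H)$ with distinguished vertex $v_*$. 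By Theorem \ref{fsgsat} each $\F(v)$ is saturated, and by Theorem \ref{treesgroupthm} we have $S=\S(v_*)\in\Syl_p(\G_\T)$ and completion $\F_\T=\F_S(\G_\T)$; in particular, for subgroups of $S$ the terms ``$\F_\T$-conjugate'' and ``$\F_\T$-centric'' coincide with ``$\G_\T$-conjugate'' and ``$\F_S(\G_\T)$-centric''. It then suffices to verify conditions (a) and (b) of Theorem \ref{completionsat} for $(\T,\F,\S)$.

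Condition (b) should transfer essentially formally. By Corollary \ref{corftgt} there is a homotopy equivalence $\Rep_{\F_\T}(P,\F)\simeq\Rep_{\G_\T}(P,\G)$, and by Proposition \ref{phip} the right-hand side is graph-isomorphic to $\tilde\T^P/C_{\G_\T}(P)$. Both graphs are connected --- the first by Proposition \ref{phipft}(a), the second as a connected quotient of the subtree $\tilde\T^P$ --- and a connected graph is a tree precisely when it is contractible, which is a homotopy invariant; hence $\Rep_{\F_\T}(P,\F)$ is a tree if and only if $\tilde\T^P/C_{\G_\T}(P)$ is. As ``$\F_\T$-centric'' and ``$\F_S(\G_\T)$-centric'' single out the same subgroups of $S$, condition (b) here is exactly condition (b) of Theorem \ref{completionsat}.

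For condition (a): the clause saying that an $\F_\T$-conjugate of an $\F(v)$-essential subgroup is $\F_\T$-centric unravels, under the identifications above and using $\F(v)=\F_{\S(v)}(\G(v))$, to condition (a) here. The remaining clause of Theorem \ref{completionsat}(a) --- that each $\S(v)$ is $\F_\T$-centric --- is not among the hypotheses of the corollary, and extracting it from the index condition is where I expect the main obstacle to lie. The line I would take is to establish $Z(\S(v))\in\Syl_p(C_{\G_\T}(\S(v)))$. Since $\S(v)\in\Syl_p(\G(v))$, any $p$-subgroup of $C_{\G(v)}(\S(v))$ centralises $\S(v)$ and so, together with $\S(v)$, generates a $p$-group, which must lie in $\S(v)$; this already yields $Z(\S(v))\in\Syl_p(C_{\G(v)}(\S(v)))$, so it would be enough to bound $C_{\G_\T}(\S(v))$ $p$-locally by $C_{\G(v)}(\S(v))$. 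For this I would appeal to the action of $\G_\T$ on its Bass--Serre tree $\tilde\T$: an element centralising $\S(v)$ normalises it, hence stabilises the fixed subtree $\tilde\T^{\S(v)}$ setwise, while the index hypothesis pins down exactly which edges incident to the vertex $\G(v)$ the Sylow subgroup $\S(v)$ can fix; the hope is that this forces $C_{\G_\T}(\S(v))\leq\G(v)$, so that $C_{\G_\T}(\S(v))=C_{\G(v)}(\S(v))$ and $\S(v)$ is $\F_S(\G_\T)$-centric. This centricity statement is the delicate point of the argument; granting it, both hypotheses of Theorem \ref{completionsat} are in force and that theorem gives that $\F_\T=\F_S(\G_\T)$ is saturated.
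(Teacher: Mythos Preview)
Your overall strategy is exactly the paper's: pass to an induced tree of fusion systems via Lemma~\ref{induce}, observe that $(H)$ holds, identify $\F_\T$ with $\F_S(\G_\T)$ via Theorem~\ref{treesgroupthm}, and then verify the hypotheses of Theorem~\ref{completionsat}, using Proposition~\ref{phip} and Corollary~\ref{corftgt} for condition~(b). The paper does precisely this, only far more tersely --- its entire treatment of condition~(a) is the phrase ``Clearly (a) holds by assumption.''

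You are right to notice that the $\S(v)$-clause of Theorem~\ref{completionsat}(a) is not literally covered by hypothesis~(a) of the corollary; the paper simply does not address this. However, your proposed Bass--Serre argument to derive the centricity of each $\S(v)$ from the index hypothesis cannot succeed, because the conclusion is false in that generality. Take $p=2$, let $\T$ be a single edge, $\G(v_*)=D_8$, $\G(v_1)=S_3$, $\G(e)=C_2$, with $C_2$ embedded as the centre of $D_8$. The index condition holds since $|S_3:C_2|=3$, yet $\S(v_1)\cong C_2$ has $C_S(\S(v_1))=D_8\nleq\S(v_1)$, so $\S(v_1)$ is not even $S$-centric. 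Your intermediate hope $C_{\G_\T}(\S(v_1))\leq\G(v_1)$ fails for the same reason: $D_8\leq C_{\G_\T}(\S(v_1))$ but $D_8\nleq S_3$ inside $\G_\T$.

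The resolution is not a subtler argument but an omission in the statement: hypothesis~(a) of the corollary should read ``if $P$ is $\G_\T$-conjugate to an $\F_{\S(v)}(\G(v))$-essential subgroup \emph{or $P=\S(v)$} then $P$ is $\F_S(\G_\T)$-centric,'' mirroring Theorem~\ref{completionsat}(a) exactly. With that clause restored the paper's ``by assumption'' is literally correct, and your proof goes through without the Bass--Serre detour.
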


\begin{proof}
Let $(\T,\F,\S)$ be a tree of fusion systems induced by $(\T,\G)$. The hypothesis of the corollary implies that $(\T,\F,\S)$ satisfies $(H)$, so there exists a completion $\F_\T$ of $(\T,\F,\S)$ by Lemma \ref{compft}. By Theorem \ref{treesgroupthm}, $\F_\T \cong \F_S(\G_\T)$, so it suffices to verify conditions (a) and (b) in Theorem \ref{completionsat} hold. Clearly (a) holds by assumption and (b) holds by Proposition \ref{phip} and Corollary \ref{corftgt}. 
\end{proof}

\begin{Ex}
Let $(\T,\F,\S)$ be the tree of fusion systems constructed in Example \ref{ftex}. Clearly (a) holds in the statement of Theorem \ref{completionsat} since $S, V_1$ and $V_2$ are the only $\F_\T$-centric subgroups. It remains to check that (b) holds for these subgroups. Firstly, we show that $|$Rep$_{\F_\T}(V_i,\F(e))|=3$. To see this, for $i=1,2$ write $Z(S)=\langle z \rangle$, choose $x_i$ such that $V_i=\langle z, x_i \rangle$ and notice that $V_i\varphi=V_i$ for each $\varphi \in$ Hom$_{\F_\T}(V_i,S)$. Since Aut$_{\F(e)}(V_i)$ interchanges $x_i$ and $x_iz$, the class of $\varphi$ is determined by which of $x_i,z$ and $x_iz$ gets mapped to $z$ under $\varphi$ yielding exactly $3$ classes of embeddings. By similar arguments the facts that Aut$_{\F(v_i)}(V_i) \cong$ Sym$(3)$ and Aut$_{\F(v_{3-i})}(V_i) \cong C_2$ imply that $|$Rep$_{\F_\T}(V_i,\F(v_i))|=1$ and $|$Rep$_{\F_\T}(V_i,\F(v_{3-i}))|=3$ respectively for $i=1,2$.
Since $4=3+1$, the Euler characteristic of Rep$_{\F_\T}(V_i,\F)$ is 0 for $i=1,2$ so this graph is a tree. In fact, we can see directly that Rep$_{\F_\T}(V_i,\F)$ is a 3 pointed star. Finally, the graph Rep$_{\F_\T}(S,\F)$ obviously consists of a single vertex and so the saturation of $\F_\T$ follows from Theorem \ref{completionsat}.
\end{Ex}

\section{Attaching $p'$-automorphisms to Fusion Systems}\label{extconst}
In this section, we apply Theorem B to prove Theorem C which is a general procedure for extending a fusion system by $p'$-automorphisms of its centric subgroups. Theorem C is a generalisation of \cite[Proposition 5.1]{BLO4} to arbitrary saturated fusion systems.

\subsection{Extensions of $p$-constrained Groups}
We appeal to some cohomological machinery which will allow us to prove a lemma which gives conditions for a $p$-constrained group to be extended by a $p'$-group of automorphisms of a normal centric subgroup. First recall the following characterisation of extensions of non-abelian groups.

\begin{Thm}\label{nonabext}
Let $G$ and $N$ be finite groups. Each homomorphism $$\begin{CD}
\psi_E: G @>>> \Out(N) \end{CD}$$ determines an obstruction in $H^3(G,Z(N))$ which vanishes if and only if there exists an extension
$$\begin{CD}
1 @>>> N @>>> E @>>> G @>>> 1 
\end{CD}$$
which gives rise to $\psi_E$ via the conjugation action of $E$ on $N$.
Furthermore, the group $H^2(G, Z(N))$ acts freely and transitively on the set of (suitably defined) equivalence classes of such extensions.
\end{Thm}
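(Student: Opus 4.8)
The plan is to invoke the classical obstruction theory for extensions with a non-abelian kernel, due to Eilenberg and Mac Lane; since the statement is entirely standard I would cite a textbook account (Mac Lane's \emph{Homology}, or Brown's \emph{Cohomology of groups}) rather than reproduce the full argument. For completeness, the structure of the proof is as follows. First observe that $\Inn(N)$ acts trivially on $Z(N)$, so $\psi_E$ induces a genuine action of $G$ on $Z(N)$, and the groups $H^n(G, Z(N))$ in the statement are taken with respect to this action. Choose a set-theoretic lift $s \colon G \to \Aut(N)$ of $\psi_E$ with $s(1) = \Id$, and for each $g,h \in G$ pick $f(g,h) \in N$ with $s(g) \circ s(h) = c_{f(g,h)} \circ s(gh)$, where $c_n$ denotes conjugation by $n$.

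Expanding the associativity identity $s(g) \circ (s(h) \circ s(k)) = (s(g) \circ s(h)) \circ s(k)$ shows that there is a unique $\omega(g,h,k) \in Z(N)$ measuring the failure of $f$ to be multiplicative, and that the resulting function $\omega$ is a $3$-cocycle in $Z^3(G, Z(N))$; replacing $s$, or altering the choices of the $f(g,h)$, changes $\omega$ by a coboundary, so its class $[\omega] \in H^3(G, Z(N))$ depends only on $\psi_E$. Any extension realising $\psi_E$ yields, via a section of $E \to G$, such data $(s,f)$ with $\omega$ trivial, so $[\omega]$ is an obstruction to existence; conversely, given $(s,f)$ with $\omega$ trivial one defines $E$ on the set $N \times G$ with product $(n,g)(m,h) = (n \cdot s(g)(m) \cdot f(g,h),\, gh)$, and the vanishing of $\omega$ is exactly what makes this multiplication associative, giving an extension inducing $\psi_E$. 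For the last assertion, two extensions inducing the same $\psi_E$ may be described by factor sets $(s, f_1)$ and $(s, f_2)$ with a common $s$; then $f_1 f_2^{-1}$ is a $Z(N)$-valued $2$-cocycle whose class in $H^2(G, Z(N))$ vanishes if and only if the two extensions are equivalent, and multiplying a fixed $f$ by an arbitrary $2$-cocycle realises every such class, which exhibits the free transitive action.

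The only step requiring genuine care is the bookkeeping: verifying that $\omega$ really takes values in $Z(N)$, that it satisfies the $3$-cocycle identity, and that its class is independent of all choices, together with the parallel computations at the $H^2$ level for the classification of extensions with a fixed $\psi_E$. These verifications are purely formal but lengthy, which is precisely why I would prefer to quote the theorem from the literature; it enters here only as an input to the $p$-constrained-group constructions of Section~\ref{extconst}, so no new ideas are needed.
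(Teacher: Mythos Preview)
Your proposal is correct and matches the paper's own treatment: the paper simply cites \cite[Theorems IV.6.6 and IV.6.7]{Br} (Brown's \emph{Cohomology of Groups}) without further argument, exactly as you suggest doing. Your additional sketch of the Eilenberg--Mac~Lane obstruction theory is accurate and goes beyond what the paper provides, but is not needed here.
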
 

\begin{proof}
See \cite[Theorems IV.6.6 and IV.6.7]{Br}.
\end{proof}
When $H \leq G$, there is a \textit{restriction map}
$$\begin{CD}
H^n(G,M) @>\mbox{res}_H^G>> H^n(H,M) 
\end{CD}$$ for each $G$-module $M$ and $n \geq 0$ (see \cite[Section III.9]{Br}). Each $g \in G$ induces a well-defined map $\begin{CD}
H^n(H,M) @>g>> H^n(H^g,M) 
\end{CD}$ and we define $z \in H^*(H,M)$ to be \textit{$G$-invariant} if $$\mbox{res}_{H \cap H^g}^H z = \mbox{res}_{H \cap H^g}^{H^g} zg $$ for each $g \in G$. The following result characterises $G$-invariant elements when $M$ is an $\mathbb{F}_pG$-module for some prime $p$.

\begin{Thm}
Let $G$ be a finite group, $p$ be prime and $M$ be an $\mathbb{F}_pG$-module. For each $H \leq G$ with $p \nmid |G:H|$ and $n \geq 0$, res$_H^G$ maps $H^n(G,M)$ isomorphically onto the set of  $G$-invariant elements of $H^n(H,M)$. 
\end{Thm}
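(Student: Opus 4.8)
The plan is to derive this from the formal properties of the transfer (corestriction) map $\operatorname{cor}^G_H\colon H^n(H,M)\to H^n(G,M)$; the statement is the classical Cartan--Eilenberg \emph{stable elements theorem}, specialised to $\mathbb{F}_p$-coefficients, and in this case the argument is entirely formal. The two inputs I would use, both among the standard properties of the transfer in \cite[Section III.9]{Br}, are: (i) $\operatorname{cor}^G_H\circ\res^G_H$ is multiplication by $|G:H|$ on $H^n(G,M)$; and (ii) the Mackey double coset formula, which in the notation of this paper takes the form
\[
\res^G_H\circ\operatorname{cor}^G_H \;=\; \sum_{g}\; \operatorname{cor}^H_{H\cap H^g}\circ g\circ\res^H_{H\cap H^{g^{-1}}},
\]
where $g$ runs over a set of representatives of the double cosets $H\backslash G/H$ and, in the middle, $g$ denotes the conjugation map $H^n(H\cap H^{g^{-1}},M)\to H^n(H\cap H^g,M)$ (recall the paper's convention $H^g=g^{-1}Hg$, so this composite indeed lands back in $H^n(H,M)$).

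First I would record that, since $M$ is an $\mathbb{F}_pG$-module, every $H^n(K,M)$ with $K\leq G$ is an $\mathbb{F}_p$-vector space, and since $p\nmid|G:H|$ the scalar $|G:H|$ is a unit in $\mathbb{F}_p$; hence multiplication by $|G:H|$ is an automorphism of each such space. Combined with (i), this shows at once that $\res^G_H$ is injective. Next, the image of $\res^G_H$ consists of $G$-invariant elements: for $z\in H^n(G,M)$ the conjugation map $g$ acts trivially on $H^n(G,M)$ and restriction is transitive, so restricting $z$ to $H\cap H^g$ through $H$ or through $H^g$ gives the same class; unwinding the definitions, $\res^G_H(z)$ satisfies the required identity $\res^H_{H\cap H^g}u=\res^{H^g}_{H\cap H^g}(ug)$ for all $g\in G$.

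It remains to show every $G$-invariant $w\in H^n(H,M)$ is in the image, and for this I would set $z:=|G:H|^{-1}\operatorname{cor}^G_H(w)$, which is legitimate by the previous paragraph. Applying (ii) to $w$, the $G$-invariance of $w$ is precisely what is needed to identify $g\circ\res^H_{H\cap H^{g^{-1}}}(w)$ with $\res^H_{H\cap H^g}(w)$ (using that conjugation commutes with restriction), so the $g$-th summand collapses to $\operatorname{cor}^H_{H\cap H^g}\circ\res^H_{H\cap H^g}(w)=|H:H\cap H^g|\cdot w$ by (i) applied inside $H$. Summing over the double cosets and using $|HgH|=|H|\cdot|H:H\cap H^g|$ together with $\sum_g|HgH|=|G|$ gives $\res^G_H\operatorname{cor}^G_H(w)=|G:H|\cdot w$, whence $\res^G_H(z)=w$. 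The only genuine care needed anywhere is the bookkeeping: pinning down the left/right-coset and conjugation conventions in (ii) so that they match verbatim the definition of $G$-invariance in the statement, and verifying the index identity $\sum_g|H:H\cap H^g|=|G:H|$. Beyond that there is no real obstacle; everything is a formal consequence of the transfer.
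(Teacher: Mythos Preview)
Your argument is correct: this is exactly the classical Cartan--Eilenberg stable elements computation via transfer and the Mackey formula, and the bookkeeping you flag (matching the conjugation convention $H^g=g^{-1}Hg$ to the double coset formula and checking $\sum_g|H:H\cap H^g|=|G:H|$) all works out. The paper itself gives no argument at all---its proof is the one-line citation ``This follows from \cite[Theorem III.10.3]{Br}''---and the proof in Brown at that reference is precisely the transfer/Mackey argument you have written out, so you have simply unpacked the citation.
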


\begin{proof}
This follows from \cite[Theorem III.10.3]{Br}.
\end{proof}
As an immediate consequence we have:

\begin{Cor}\label{cohcor}
Let $G$ be a finite group and $M$ be an $\mathbb{F}_pG$-module. If $H$ is a strongly $p$-embedded subgroup of $G$ then res$_H^G$ induces an isomorphism
$$\begin{CD}
H^*(G,M) @>\mbox{res}_H^G>> H^*(H,M). 
\end{CD}$$  
\end{Cor}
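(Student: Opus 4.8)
The plan is to read this Corollary off from the theorem immediately preceding it (the one identifying $\res_H^G H^n(G,M)$ with the set of $G$-invariant elements of $H^n(H,M)$ whenever $p \nmid |G:H|$), together with the vanishing of positive-degree mod-$p$ cohomology of $p'$-groups.

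First I would observe that, by definition, a strongly $p$-embedded subgroup $H$ of $G$ contains a Sylow $p$-subgroup of $G$, so $p \nmid |G:H|$ and the preceding theorem applies in every degree: the map $\res_H^G \colon H^n(G,M) \to H^n(H,M)$ is injective with image exactly the set of $G$-invariant elements of $H^n(H,M)$. It therefore suffices to show that, for $n \geq 1$, \emph{every} class $z \in H^n(H,M)$ is $G$-invariant, i.e.\ that $\res_{H \cap H^g}^{H} z = \res_{H \cap H^g}^{H^g}(zg)$ for each $g \in G$.

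Second, I would dispose of this identity by a case split on whether $g \in H$. When $g \in H$ we have $H^g = H$ and the identity reduces to the assertion that conjugation by an element of $H$ acts trivially on $H^{*}(H,M)$, which is standard. When $g \in G \setminus H$, strong $p$-embedding forces $H \cap H^g$ to be a $p'$-group; since $M$ is an $\mathbb{F}_p$-module, $H^n(H \cap H^g, M)$ is simultaneously an $\mathbb{F}_p$-vector space and annihilated by $|H \cap H^g|$ (by the standard restriction--corestriction argument through the trivial subgroup), hence vanishes for $n \geq 1$, so both sides of the displayed identity lie in the zero group. This shows that $\res_H^G$ is an isomorphism in positive degrees. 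In degree $0$ one obtains only the inclusion $M^G \hookrightarrow M^H$, but as only degrees $\geq 1$ intervene in the intended application (the obstruction in $H^3$ and the torsor structure under $H^2$ of Theorem \ref{nonabext}), it is harmless to read $H^{*}$ as positive-degree cohomology.

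I do not anticipate a genuine obstacle here: the substance is entirely contained in the preceding theorem, and the remaining ingredients --- triviality of the inner-automorphism action on group cohomology and the vanishing of $H^{\geq 1}(K,M)$ for a $p'$-group $K$ acting on an $\mathbb{F}_p$-module $M$ --- are elementary. The only point meriting a moment's care is recognising that the defining condition for $G$-invariance may be tested one $g$ at a time, and that the $p'$-hypothesis built into strong $p$-embedding is precisely what annihilates the cohomology of $H \cap H^g$.
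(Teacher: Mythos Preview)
Your proposal is correct and is precisely the argument the paper has in mind: the paper states the corollary as an ``immediate consequence'' of the preceding theorem without further proof, and your unpacking---$p\nmid |G:H|$ because $H$ contains a Sylow $p$-subgroup, then $G$-invariance is automatic since $H\cap H^g$ is a $p'$-group for $g\notin H$---is exactly the intended reasoning. Your caveat about degree $0$ is apt and harmless, since only $H^2$ and $H^3$ are used downstream.
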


Theorem \ref{nonabext} and Corollary \ref{cohcor} may be applied to prove the following result which is a key ingredient in the proof of Theorem C.

\begin{Lem}\label{constex}
Let $H$ be a finite group with $O_{p'}(H)=1$ and let $Q$ be a normal $p$-subgroup of $H$ with $C_H(Q) \leq Q$. Assume $\Delta \leq \Out(Q)$ is chosen such that $\Out_H(Q)$ is a strongly $p$-embedded subgroup of $\Delta$. There exists a finite group $G$ containing $H$ as a $p'$-index subgroup with $Q \unlhd G$ and $\Out_G(Q) \cong \Delta$.
\end{Lem}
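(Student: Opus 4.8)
The plan is to realise $G$ as an extension $1 \to Q \to G \to \Delta \to 1$ whose induced outer action on $Q$ is the given inclusion $\Delta \hookrightarrow \Out(Q)$, arranged so that the preimage in $G$ of the subgroup $\Out_H(Q) \leq \Delta$ is isomorphic to $H$. First I would record the preliminary facts: since $C_H(Q) \leq Q$ we have $C_H(Q) = Z(Q)$ and $\Out_H(Q) \cong H/Q$, and $\Delta$ acts on the finite abelian $p$-group $Z(Q)$ through $\Delta \hookrightarrow \Out(Q) \to \Aut(Z(Q))$. By Theorem \ref{nonabext} the inclusion $\Delta \hookrightarrow \Out(Q)$ carries an obstruction $\omega \in H^3(\Delta, Z(Q))$, and by naturality of this obstruction in the group variable $\res^\Delta_{\Out_H(Q)}\omega$ is the obstruction attached to the natural map $H/Q \cong \Out_H(Q) \to \Out(Q)$; since $H$ is itself an extension of $H/Q$ by $Q$ realising that map, the same theorem gives $\res^\Delta_{\Out_H(Q)}\omega = 0$. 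As $\Out_H(Q)$ is strongly $p$-embedded in $\Delta$, Corollary \ref{cohcor} --- applied to the coefficient module $Z(Q)$, which reduces to the elementary abelian case of the corollary by induction along the $\Delta$-invariant filtration $Z(Q) \supseteq pZ(Q) \supseteq p^2Z(Q) \supseteq \cdots$ using the long exact cohomology sequences and the five lemma --- shows that $\res^\Delta_{\Out_H(Q)}$ is an isomorphism in every degree. Hence $\omega = 0$, so by Theorem \ref{nonabext} there is an extension $\widehat G$ of $\Delta$ by $Q$ realising $\Delta \hookrightarrow \Out(Q)$.

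Next I would adjust $\widehat G$ so that $H$ literally sits inside it. Let $\widehat G_0$ be the preimage of $\Out_H(Q)$ in $\widehat G$; it is an extension of $H/Q$ by $Q$ inducing the same outer action as $H$, so by the free transitive action of $H^2(\Out_H(Q), Z(Q))$ on equivalence classes of such extensions (Theorem \ref{nonabext}) there is a unique $\theta \in H^2(\Out_H(Q), Z(Q))$ translating the class of $\widehat G_0$ to that of $H$. Using the surjectivity of $\res^\Delta_{\Out_H(Q)}$ on $H^2$ established above, choose $\widetilde\theta \in H^2(\Delta, Z(Q))$ with $\res^\Delta_{\Out_H(Q)}\widetilde\theta = \theta$, and let $G$ be the extension obtained by translating $\widehat G$ by $\widetilde\theta$. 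Since $\widetilde\theta$ lies in $H^2(\Delta, Z(Q))$, the extension $G$ still realises the inclusion $\Delta \hookrightarrow \Out(Q)$; and since the classification of extensions is natural under restriction, the preimage $G_0$ of $\Out_H(Q)$ in $G$ has the class of $\widehat G_0$ translated by $\theta$, that is, $G_0 \cong H$. Thus $H \cong G_0 \leq G$, and $[G : G_0] = [\Delta : \Out_H(Q)]$ is prime to $p$ because the strongly $p$-embedded subgroup $\Out_H(Q)$ contains a Sylow $p$-subgroup of $\Delta$.

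It then remains to identify the quotient: $Q \unlhd G$, and the conjugation homomorphism $G \to \Out(Q)$ has kernel $QC_G(Q)$ and image equal to that of $\Delta \hookrightarrow \Out(Q)$, namely $\Delta$, so comparison with $|G/Q| = |\Delta|$ forces $QC_G(Q) = Q$; hence $C_G(Q) \leq Q$ and $\Out_G(Q) = G/Q \cong \Delta$ (incidentally $O_{p'}(G) \leq C_G(Q) \leq Q$ is then trivial, though this is not required). I expect the real work to be in the second paragraph: producing \emph{some} extension of $\Delta$ by $Q$ is routine cohomological bookkeeping over the $p'$-index overgroup, but forcing its restriction over $\Out_H(Q)$ to reproduce $H$ exactly is what requires the surjectivity of $\res^\Delta_{\Out_H(Q)}$ on $H^2$, and hence genuinely uses the strongly $p$-embedded hypothesis rather than merely the index being prime to $p$.
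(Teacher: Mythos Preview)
Your proof is correct and follows essentially the same approach as the paper: both use Corollary~\ref{cohcor} to identify $H^3$ and $H^2$ of $\Delta$ and $\Out_H(Q)$ with coefficients in $Z(Q)$, invoke Theorem~\ref{nonabext} to kill the obstruction and then adjust the resulting extension via the $H^2$-torsor so that the subextension over $\Out_H(Q)$ is $H$. You are actually more careful than the paper in two places --- you justify applying Corollary~\ref{cohcor} to the abelian $p$-group $Z(Q)$ (not a priori an $\mathbb{F}_p$-module) via the $p$-power filtration, and you verify $C_G(Q)\leq Q$ at the end by the counting argument $|G/Q|=|\Delta|=|G/QC_G(Q)|$ --- both of which the paper glosses over.
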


\begin{proof}
Write $K:=$ Out$_H(Q)$. Since $K$ is strongly $p$-embedded in $\Delta$ and $Z(Q)$ may be regarded as an $\mathbb{F}_p \Delta$-module, Corollary \ref{cohcor} implies that $H^j(\Delta; Z(Q)) \cong H^j(K;Z(Q))$ for each $j > 0$. Since this holds when $j=3$,  there exists a finite group $G$ which fits into a diagram

$$\begin{CD}
1 @>>> Q @>>> G @>>> \Delta @>>> 1\\
@. @| @AAA @AAA  \\
1 @>>> Q @>>> H @>>> K @>>> 1,
\end{CD}$$
by Theorem \ref{nonabext}.  Since $H^2(\Delta; Z(Q)) \cong H^2(K;Z(Q))$ acts freely and transitively on the set of all such extensions of $Q$ by $\Delta$, we may choose $G$ such that $H \leq G$. In particular, $\Delta \cong G/Q=$ Out$_G(Q)$, as required.
\end{proof}

%Furthermore, we may assume that $O_{p'}(G)=1$ upon replacing $G$ by $G/O_{p'}(G)$ if necessary. Observe that $p \nmid |C_G(Q) : C_H(Q)|$ so that $C_G(Q)=Z(Q) \times C$ where $C=O_{p'}(C_G(Q)).$ But $C \leq O_{p'}(G)=1$ since $C_G(Q) \unlhd G$ so that in fact $C_G(Q)=Z(Q)$. 

\subsection{Proof of Theorem C}\label{proofsat}
Let $\F_0$ be a saturated fusion system on a finite $p$-group $S$. Our goal is to apply Lemma \ref{constex} to find conditions for there to exist a saturated fusion system $\F$ containing $\F_0$ with the property that Aut$_{\F_0}(Q)$ is a strongly $p$-embedded subgroup of Aut$_\F(Q)$ whenever $Q \leq S$ is $\F$-essential. The technique we use will involve the construction of a star of fusion systems $(\T,\F(-),\S(-))$ with $\F_0$ associated to the central vertex and the fusion systems of certain $p'$-extensions of constrained models of the $N_{\F_0}(Q)$ associated to the outer vertices. The saturation of the completion of $(\T,\F(-),\S(-))$ will follow from Theorem \ref{completionsat}.

\begin{Thm}\label{proofsatthm}
Let $\F_0$ be a saturated fusion system on a finite $p$-group $S$. For $1 \leq i \leq m$, let $Q_i \leq S$ be fully $\F_0$-normalised subgroups with $Q_i\varphi \nleq Q_j$ for each $\varphi \in \Hom_{\F_0}(Q_i,S)$ and $i \neq j$. Set $K_i:=\Out_{\F_0}(Q_i)$ and choose $\Delta_i \leq \Out(Q_i)$ so that $K_i$ is a strongly $p$-embedded subgroup of $\Delta_i$. Write $$\F= \langle \{\Hom_{\F_0}(P,S) \mid P \leq S\} \cup \{\Delta_i \mid 1 \leq i \leq m \} \rangle_S.$$ Assume further that for each $1 \leq i \leq m$,
\begin{itemize}
\item[(a)] $Q_i$ is $\F_0$-centric (hence $\F$-centric) and minimal (under inclusion) amongst all $\F$-centric subgroups, and
\item[(b)] no proper subgroup of $Q_i$ is $\F_0$-essential. 
\end{itemize}
Then $\F$ is saturated.
\end{Thm}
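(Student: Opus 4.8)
The plan is to exhibit $\F$ as the completion of a suitable star of fusion systems and then to invoke Theorem~\ref{completionsat}. First I would produce, for each $i$, a finite group carrying the enlarged automiser. Since $Q_i$ is fully $\F_0$-normalised, $N_{\F_0}(Q_i)$ is saturated by Theorem~\ref{nfkq}, and since $Q_i$ is $\F_0$-centric it is normal and centric in $N_{\F_0}(Q_i)$; hence $N_{\F_0}(Q_i)$ is constrained, with model $H_i$ as in Theorem~\ref{const}: a finite group with $N_S(Q_i)\in\Syl_p(H_i)$, $Q_i\unlhd H_i$, $C_{H_i}(Q_i)=Z(Q_i)$, $O_{p'}(H_i)=1$ and $\Out_{H_i}(Q_i)=\Out_{N_{\F_0}(Q_i)}(Q_i)=\Out_{\F_0}(Q_i)=K_i$. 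Applying Lemma~\ref{constex} with $\Delta=\Delta_i$ then produces a finite group $G_i\geq H_i$ with $[G_i:H_i]$ prime to $p$, $Q_i\unlhd G_i$ and $\Out_{G_i}(Q_i)\cong\Delta_i$; from this one checks that $C_{G_i}(Q_i)=Z(Q_i)$, $O_{p'}(G_i)=1$, $N_S(Q_i)\in\Syl_p(G_i)$, and that $\Aut_{G_i}(Q_i)$ equals the full preimage of $\Delta_i$ in $\Aut(Q_i)$.

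Next I would let $\T$ be the star with central vertex $v_*$ and leaves $v_1,\dots,v_m$ joined to $v_*$ by edges $e_1,\dots,e_m$, and set $\S(v_*)=S$, $\S(v_i)=\S(e_i)=N_S(Q_i)$, $\F(v_*)=\F_0$, $\F(v_i)=\F_{N_S(Q_i)}(G_i)$ and $\F(e_i)=N_{\F_0}(Q_i)=\F_{N_S(Q_i)}(H_i)$, with $\S(f_{e_iv_i})$ the identity and $\S(f_{e_iv_*})$ the inclusion $N_S(Q_i)\hookrightarrow S$. One verifies routinely that $(\T,\F,\S)$ is a tree of fusion systems satisfying $(H)$ and that each $\F(v)$ is saturated (Theorem~\ref{fsgsat} at the leaves), so by Lemma~\ref{compft} a completion $\F_\T$ exists on $S$. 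The first real task is to identify $\F_\T$ with $\F$. The containment $\F\subseteq\F_\T$ is immediate, since $\F_0=\F(v_*)$ and $\Aut_{\F(v_i)}(Q_i)=\Aut_{G_i}(Q_i)$ is the preimage of $\Delta_i$. For $\F_\T\subseteq\F$ it suffices to show $\F_{N_S(Q_i)}(G_i)\subseteq\F$; here $Q_i$ is normal and centric in $\F_{N_S(Q_i)}(G_i)$, so by Alperin's theorem (Theorem~\ref{alpthm}) together with Lemma~\ref{esscont} this fusion system is generated by the automisers $\Aut_{G_i}(U)$ of subgroups $U\supseteq Q_i$, and each such automiser lies in the subsystem generated by $N_{\F_0}(Q_i)\subseteq\F$ and $\Aut_{G_i}(Q_i)\subseteq\F$, by a Frattini argument based on $[G_i:H_i]$ being prime to $p$.

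It then remains to verify hypotheses (a) and (b) of Theorem~\ref{completionsat} for $(\T,\F,\S)$. For (a): each $\F_{N_S(Q_i)}(G_i)$-essential subgroup, and each $N_S(Q_i)$, contains $Q_i$ and is therefore $\F$-centric by Lemma~\ref{centlem}(b); for an $\F_0$-essential subgroup $E$ one invokes hypotheses (a) and (b) of the theorem---the minimality of the $Q_i$ among $\F$-centric subgroups, the separation $Q_i\varphi\nleq Q_j$, and the absence of $\F_0$-essential proper subgroups of the $Q_i$---to rule out that any $\F$-conjugate of $E$ (necessarily obtained from $E$ by interposing restrictions of the automorphisms in the $\Delta_i$) can fail to be $S$-centric, so that $E$ stays $\F$-centric. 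The heart of the argument is hypothesis (b): that $\Rep_{\F_\T}(P,\F)$ is a tree whenever $P$ is $\F_\T$-centric. By Proposition~\ref{phipft} this graph is connected, so only the absence of cycles is at issue. If $P$ contains no $\F$-conjugate of any $Q_i$, then minimality of the $Q_i$ (no $Q_i$ has a proper $\F$-centric subgroup) forces every $\F$-morphism out of $P$---and out of every subgroup of a given $N_S(Q_i)$ that is $\F$-conjugate into $P$---to lie already in $\F_0$ and to avoid the extra automorphisms, which collapses $\Rep_{\F_\T}(P,\F)$ to a tree. If $P$ does contain an $\F$-conjugate of some $Q_i$, then, conjugating via Proposition~\ref{phipft}, one may assume $Q_i\leq P$; the separation condition ensures $P$ is attached to a single leaf $v_i$, and the local structure of $\Rep_{\F_\T}(P,\F)$ around $v_i$ is governed by the action of $\Out_{\F_\T}(Q_i)$---equivalently of $\Delta_i$---on the coset space $\Delta_i/K_i$. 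Here the hypothesis that $K_i$ is strongly $p$-embedded in $\Delta_i$, equivalently that every nontrivial $p$-subgroup of $\Delta_i$ fixes at most one point of $\Delta_i/K_i$, is precisely what forces this local picture, hence the whole graph, to be acyclic.

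I expect the main obstacle to be exactly this last verification of hypothesis (b) of Theorem~\ref{completionsat}: converting the strongly $p$-embedded hypothesis, together with conditions (a) and (b) of the theorem, into the combinatorial statement that the $P$-orbit graph of each $\F_\T$-centric $P$ has no cycles. (The identification $\F_\T=\F$ and the verification of hypothesis (a), although they require some care with the two running hypotheses on the $Q_i$, are comparatively routine.) Once hypothesis (b) is in hand, Theorem~\ref{completionsat} yields that $\F_\T=\F$ is saturated, which is the assertion of the theorem.
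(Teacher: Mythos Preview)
Your proposal is correct and follows essentially the same route as the paper: constrained models $H_i$, extensions $G_i$ via Lemma~\ref{constex}, a star of fusion systems with $\F_0$ at the centre and $\F_{N_S(Q_i)}(G_i)$ at the leaves, then verification of the two hypotheses of Theorem~\ref{completionsat}. The one place the paper is organised slightly differently is hypothesis~(b): there the case split is on whether $P$ is $\F_0$-isomorphic to some $Q_i$ (rather than whether it contains a conjugate), and the key computation shows that a leaf vertex $[\beta']_{\F(v_i)}$ can have degree greater than one only when $P\alpha=Q_i$, via the inclusion $\Out_{(P\alpha)Q_i}(Q_i)\leq K_i\cap\chi K_i\chi^{-1}$ for a suitable $\chi\in\Delta_i\setminus K_i$ --- which is precisely your strongly-$p$-embedded coset observation made explicit.
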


\begin{proof}
Since $Q_i$ is fully $\F_0$-normalised and $\F_0$-centric, $N_{\F_0}(Q_i)$ is a constrained fusion system. Hence by Theorem \ref{const} there exists a unique finite group $H_i$ with $Q_i \unlhd H_i$, $C_{H_i}(Q_i) \leq Q_i$ and $N_{\F_0}(Q_i)=\F_{N_S(Q_i)}(H_i)$. Since Out$_{\F_0}(Q_i)=$ Out$_{H_i}(Q_i)$, Lemma \ref{constex} implies that there is some finite group $G_i$ containing $H_i$ with $p \nmid |G_i: H_i|$ and which satisfies Out$_{G_i}(Q_i)=\Delta_i$. Now let $\T$ be the star formed by a central vertex $v_0$ and $m$ vertices $v_i$ each incident to a unique edge $e_i=(v_0,v_i)$ for $1 \leq i \leq m$. Let $(\T,\F(-),\S(-))$ be the tree of fusion systems formed by setting $\F(v_0):=\F_0$, $\F(v_i):=\F_{N_S(Q_i)}(G_i)$ and $\F(e_i):=\F_{N_S(Q_i)}(H_i)=N_{\F_0}(Q_i)$ for $1 \leq i \leq m$. Then $(\T,\F(-),\S(-))$ satisfies $(H)$, $\F_\T=\F$ by Lemma \ref{compft} and it suffices to verify that conditions (a) and (b) of Theorem \ref{completionsat} hold.

Suppose that $P=\S(v_i)$ or $P$ is $\F$-conjugate to an $\F(v_i)$-essential subgroup. We need to show that $P$ is $\F$-centric. In the first case, since $Q_i$ is $\F$-centric (by assumption), $P=N_S(Q_i)$ is also $\F$-centric by Lemma \ref{centlem} (b). In the second case, there exists some $\varphi \in$ Hom$_\F(P,S)$ such that $P\varphi \leq \S(v_i)$ is $\F(v_i)$-essential. If $i \neq 0$ then since $Q_i \unlhd \F(v_i)$, Lemma \ref{esscont} implies that $Q_i \leq P\varphi$ so that since $Q_i$ is $\F$-centric also $P\varphi$ (and hence $P$) is $\F$-centric by Lemma \ref{centlem} (b) again. If $i = 0$ then since $P\varphi$ is not contained in $Q_i$ (by condition (b) in the theorem) for all $1 \leq i \leq m$ and $P\varphi$ is $\F_0$-centric, $P\varphi$ is also $\F$-centric, by the definition of $\F_\T$. 

It remains to prove that (b) holds in Theorem \ref{completionsat}. 
It will be enough to show directly that the connected component of the vertex $[\iota_P^S]_{\F(v_0)}$ in Rep$_{\F_\T}(P,\F(-))$ is a tree for each $\F_\T$-centric subgroup $P \leq S$. Suppose $[\beta]_{\F(v_0)}$ is connected to $[\beta']_{\F(v_i)}$ via an edge $[\alpha]_{\F(e_i)}$ in Rep$_{\F_\T}(P,\F(-))$. If $[\beta']_{\F(v_i)}$ is incident to another edge $[\alpha']_{\F(e_i)}$ then there exists $\rho \in$ Iso$_{\F(v_i)}(P\alpha,P\alpha')$ such that $\alpha'=\alpha \circ \rho$. Since $Q_i \unlhd \F(v_i)$, $\rho$ extends to a map $\bar{\rho} \in$ Hom$_{\F(v_i)}((P\alpha) Q_i,(P\alpha') Q_i)$ with $\bar{\rho}|_{Q_i} \in $ Aut$_{\F(v_i)}(Q_i)$. Set $\chi:=[\rho|_{Q_i}] \in$ Out$_{\F(v_i)}(Q_i)=\Delta_i$ for the class of $\rho|_{Q_i}$ in $\Delta_i$ and note that $\chi \notin K_i$, since otherwise $[\alpha]_{\F(e_i)}=[\alpha']_{\F(e_i)}$. Now $$\chi^{-1} \circ \mbox{Out}_{(P\alpha) Q_i}(Q_i) \circ \chi = \mbox{Out}_{(P\alpha') Q_i}(Q_i) \leq K_i$$ and $$\mbox{Out}_{(P\alpha) Q_i}(Q_i) \leq K_i \cap \chi \circ K_i \circ \chi^{-1},$$ which is a $p'$-group by hypothesis. Since $Q_i$ is $\F$-centric, we have Out$_{(P\alpha) Q_i}(Q_i) \cong (P\alpha) Q_i/Q_i$ so that $P\alpha \leq Q_i$. But $P$ is $\F$-centric and $Q_i$ is minimal amongst all $\F$-centric subgroups (by assumption) and so $P\alpha=Q_i$. Furthermore since $Q_i$ is not $\F_0$-conjugate to $Q_j$ for $i \neq j$, this must occur for some unique $i$. 

Setting $\beta=\iota_P^S$ in the previous paragraph we see that either Rep$_{\F_\T}(P,\F(-))$ is a tree or $P$ is $\F_0$-conjugate to a unique $Q_i$. Suppose we are in this latter situation and let $\Gamma_0 \subset$ Rep$_\F(P,\T)$ be the connected subgraph consisting of all vertices of the form $[\beta]_{\F(v_0)}$ or $[\beta']_{\F(v_i)}$ and edges of the form $[\alpha]_{\F(e_i)}$. If $[\gamma]_{\F(v_j)}$ is a vertex not in $\Gamma_0$ then $j \neq i$ and a minimal path from $[\gamma]_{\F(v_j)}$ to $\Gamma_0$ must consist of a single edge $[\delta]_{\F(e_j)}$, otherwise by the argument in the previous paragraph, we would have that $P$ is $\F_0$-conjugate to $Q_j$, a contradiction (see Figure 4.1 below). Hence $\Gamma_0$ is a deformation retract of  Rep$_{\F_\T}(P,\F(-))$ and it is enough to prove that $\Gamma_0$ is a tree. In fact we prove that $\Gamma_0$ is a star. Suppose that a vertex $[\beta]_{\F(v_0)} \in \Gamma_0$ is incident to two (different) edges $[\alpha]_{\F(e_i)}$ and $[\alpha']_{\F(e_i)}$. The argument in the previous paragraph shows that $\alpha, \alpha' \in$ Iso$_{\F(e_i)}(P,Q_i)$ and there exists $\rho \in$ Aut$_{\F(v_i)}(Q_i)=\Delta_i$ such that $\alpha \circ \rho = \alpha'$. Hence $[\alpha]_{\F(e_i)} = [\alpha']_{\F(e_i)}$ and each $[\alpha]_{\F(v_0)} \in \Gamma_0$ is incident to a unique edge. This establishes the claim, and finishes the proof of the theorem.
\end{proof}

\begin{figure}[!h]\label{treefig}
\centering
\labellist
\pinlabel $[\beta']_{\F(v_i)}$ at 140 70
\pinlabel $[\beta]_{\F(v_0)}$ at 100 150
\pinlabel$[\gamma]_{\F(v_j)}$ at 80 200
\pinlabel$[\alpha]_{\F(e_i)}$ at 80 100
\pinlabel$[\delta]_{\F(e_j)}$ at 20 175
\endlabellist

\includegraphics[scale=0.6]{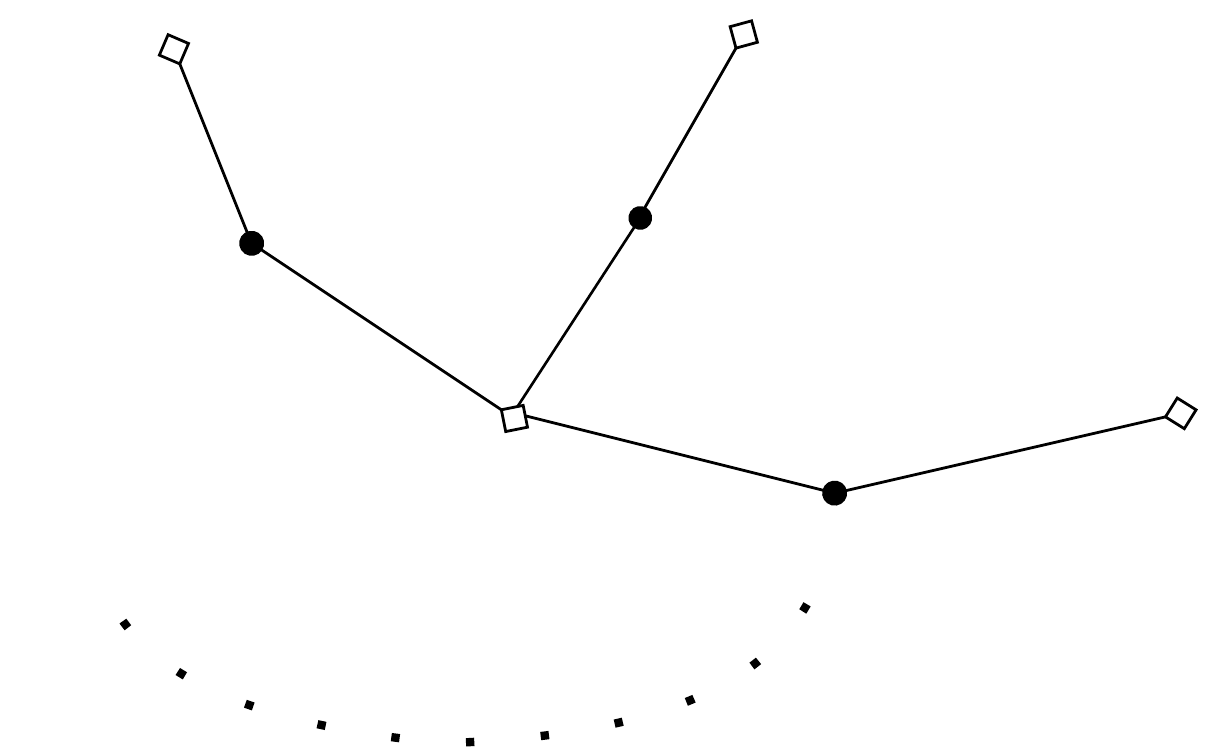}
\caption{Vertices of distance at most two from $[\beta']_{\F(v_i)}$ in $\Gamma$ for each $i \neq j$.} 
\end{figure}

\bibliography{refs}        %use a bibtex bibliography file refs.bib

\begin{thebibliography}{10}

\bibitem{AKO}
Michael Aschbacher, Radha Kessar, and Bob Oliver.
\newblock {\em Fusion Systems in Algebra and Topology}.
\newblock Cambridge University Press, 2011.

\bibitem{BLO1}
Carles Broto, Ran Levi, and Bob Oliver.
\newblock The homotopy theory of fusion systems.
\newblock {\em J. Amer. Math. Soc.}, 26:779--856, 2003.

\bibitem{BLO4}
Carles Broto, Ran Levi, and Bob Oliver.
\newblock A geometric construction of saturated fusion systems.
\newblock {\em An alpine anthology of homotopy theory}, 399:11--39, 2006.

\bibitem{Br}
Kenneth~S. Brown.
\newblock {\em Cohomology of Groups}.
\newblock London Math Soc. Lecture Note Series, 2011.

\bibitem{CR}
David~A. Craven.
\newblock {\em The Theory of Fusion Systems}.
\newblock Cambridge University Press, 2011.

\bibitem{GJ}
Paul~G. Goerss and John~F. Jardine.
\newblock {\em Simplicial Homotopy Theory}.
\newblock Birkhauser-Verlag, 1999.

\bibitem{Gol}
David~M. Goldschmidt.
\newblock Automorphisms of trivalent graphs.
\newblock {\em The Annals of Mathematics}, 111:377--406, 1980.

\bibitem{O5}
Bob Oliver.
\newblock Simple fusion systems over $p$-groups with abelian subgroup of index
  $p$.
\newblock Preprint, 2012.

\bibitem{Puig}
Llu\'{i}s Puig.
\newblock Frobenius categories.
\newblock {\em J. Algebra}, 303:309--357, 2006.

\bibitem{Rob}
Geoffrey~R. Robinson.
\newblock Amalgams, blocks, weights, fusion systems and finite simple groups.
\newblock {\em J. Alg.}, 314:912--923, 2007.

\bibitem{Serre}
J.-P. Serre.
\newblock {\em Trees (Translated from the French by John Stillwell)}.
\newblock Springer-Verlag, 1980.

\end{thebibliography}
\bibliographystyle{plain}
\end{document}